\documentclass[a4paper]{amsart}
\usepackage[all,cmtip]{xy}
\usepackage{amsmath}
\usepackage{amssymb}
\usepackage{color}
\usepackage{float}
\usepackage{varioref}
\usepackage{appendix}
\usepackage[bookmarksnumbered,colorlinks,plainpages]{hyperref}
\usepackage{cite}

\DeclareMathOperator{\rank}{rk}
\DeclareMathOperator{\lcm}{lcm}
\renewcommand{\mod}{\operatorname{mod}}
\DeclareMathOperator{\CM}{CM}
\newcommand{\rperp}[1]{#1^\perp}
\newcommand{\taumin}{\tau^{-}}
\newcommand{\ffrac}[2]{#1/#2}
\newcommand{\sCM}{\underline{\CM}}
\newcommand{\CMgr}[2]{\CM^{#1}\textrm{-}#2}
\newcommand{\sCMgr}[2]{\underline{\CM}^{#1}\textrm{-}#2}
\newcommand{\La}{\Lambda}

\newcommand{\mmod}[1]{\mathrm{mod}\textrm{-}#1}
\newcommand{\Der}[1]{\mathrm{D}^b(#1)}
\newcommand{\Knull}[1]{\mathrm{K}_0(#1)}
\newcommand{\Dsing}[2]{\mathrm{D}_{\mathrm{Sg}}^{#1}(#2)}

\newcommand{\ra}{\rightarrow}

\newcommand{\la}{\lambda}

\newcommand{\de}{\delta}
\newcommand{\De}{\Delta}
\newcommand{\ga}{\gamma}
\newcommand{\Ga}{\Gamma}
\newcommand{\si}{\sigma}
\newcommand{\ci}{\circ}

\newcommand{\coh}[1]{\mathrm{coh}\textrm{-}#1}
\newcommand{\vect}[1]{\mathrm{vect}\textrm{-}#1}
\newcommand{\svect}[1]{\underline{\mathrm{vect}}\textrm{-}#1}
\newcommand{\vx}{\vec{x}}
\newcommand{\vc}{\vec{c}}
\newcommand{\vom}{\vec{\omega}}
\newcommand{\modgr}[2]{\mathrm{mod}^{#1}\textrm{-}#2}
\newcommand{\projgr}[2]{\mathrm{proj}^{#1}\textrm{-}#2}

\newcommand{\Sing}[2]{\mathrm{D}^{#1}_{Sg}(#2)}
\newcommand{\XX}{\mathbb{X}}
\newcommand{\PP}{\mathbb{P}}
\newcommand{\FF}{\mathbb{F}}
\newcommand{\ZZ}{\mathbb{Z}}

\newcommand{\Dd}{\mathcal{D}}

\newcommand{\Oo}{\mathcal{O}}
\newcommand{\Tt}{\mathcal{T}}
\def\AA{\mathbb{A}}
\newcommand{\DD}{\mathbb{D}}
\newcommand{\EE}{\mathbb{E}}
\newcommand{\Hom}[3]{\mathrm{Hom}_{#1}(#2,#3)}
\newcommand{\Homstab}[3]{\mathrm{\underline{Hom}}_{#1}(#2,#3)}
\newcommand{\Ext}[4]{\mathrm{Ext}^{#1}_{#2}(#3,#4)}
\newcommand{\Hh}{\mathcal{H}}

\newcommand{\eulerchar}[1]{\chi_{#1}}
\newcommand{\Ll}{\mathcal{L}}
\newcommand{\lra}{\longrightarrow}
\newcommand{\lup}[1]{\xrightarrow[]{#1}}
\newcommand{\up}[1]{\stackrel{#1}{\lra}}
\newcommand{\slope}[1]{\mu\,#1}
\DeclareMathOperator{\dual}{D}

\newcommand{\image}[1]{\mathrm{im}(#1)}
\newcommand{\kernel}[1]{\mathrm{ker}(#1)}
\newcommand{\res}[2]{{#1}_{|#2}}
\newcommand{\Aa}{\mathcal{A}}
\newcommand{\sAa}{\underline{\Aa}}

\newtheorem*{Thm-A}{Theorem A}
\newtheorem*{Thm-C}{Theorem C}
\newtheorem*{Lemma-B}{Lemma B}
\newtheorem{proposition}{Proposition}[section]
\newtheorem{theorem}[proposition]{Theorem}
\newtheorem{corollary}[proposition]{Corollary}

\newtheorem{lemma}[proposition]{Lemma}

\theoremstyle{definition}

\newtheorem{remark}[proposition]{Remark}

\numberwithin{equation}{section}
\newcommand{\LL}{\mathbb{L}} \newcommand{\set}[1]{\{#1\}}
\newcommand{\Set}[2]{\{#1 \,|\, #2\}}
\newcommand{\Pp}{\mathcal{P}}
\newcommand{\sPp}{\underline{\Pp}}
\newcommand{\Ff}{\mathcal{F}} \newcommand{\vy}{\vec{y}}
\newcommand{\bx}{\bar{x}}
\newcommand{\px}{\dot{x}}
\newcommand{\Ss}{\mathcal{S}}
\newcommand{\ulPp}{\underline{\Pp}}
\newcommand{\ulPhi}{\underline{\Phi}}
\newcommand{\coker}[1]{\mathrm{coker}{#1}}
\newcommand{\incl}{\hookrightarrow}
\newcommand{\End}[2]{\mathrm{End}_{#1}(#2)}
\newcommand{\add}[1]{\mathrm{add}(#1)} \newcommand{\iso}{\cong}

\newcommand{\nilop}[1]{\tilde{\Ss}(#1)}
\newcommand{\snilop}[1]{\underline{\tilde{\Ss}}(#1)}

\newcommand{\sMFgr}[2]{\underline{\textrm{MF}}^{#1}(#2)}
\newcommand{\bp}{\bar{p}}
\newcommand{\QQ}{\mathbb{Q}}
\DeclareMathOperator{\determ}{det}
\newcommand{\sHom}[3]{\mathrm{\underline{Hom}}_{#1}(#2,#3)}
\newcommand{\sEnd}[2]{\mathrm{\underline{End}}_{#1}(#2)}
\newcommand{\euler}[2]{\langle #1,#2\rangle}
\newcommand{\seuler}[2]{\langle\langle #1,#2\rangle\rangle}
\newcommand{\vz}{\vec{z}}

\begin{document}
\title[Nilpotent operators and weighted projective lines]{Nilpotent
  operators and \\ weighted projective lines}

\author[D. Kussin]{Dirk Kussin}
\address{Dipartimento di Informatica -- Settore di Matematica\\
Universit\`a degli Studi di Verona\\
Strada le Grazie 15 -- Ca' Vignal 2\\
37134 Verona\\
Italy}
\email{dirk@math.uni-paderborn.de}

\author[H. Lenzing]{Helmut Lenzing}
\address{Institut f\"{u}r Mathematik\\
Universit\"{a}t Paderborn\\
33095 Paderborn\\
Germany}
\email{helmut@math.uni-paderborn.de}

\author[H. Meltzer]{Hagen Meltzer}
\address{Instytut Matematyki\\
Uniwersytet Szczeci\'nski\\
70451 Szczecin\\
Poland}
\email{meltzer@wmf.univ.szczecin.pl}
\subjclass[2010]{Primary: 16G60, 18E30, 14J17. Secondary: 16G70, 47A15}
\maketitle

\begin{abstract}
  We show a surprising link between singularity theory and the
  invariant subspace problem of nilpotent operators as recently
  studied by C.~M.\ Ringel and M.\ Schmidmeier, a problem with a longstanding
  history going back to G.\ Birkhoff. The link is established via
  weighted projective lines and (stable) categories of vector bundles
  on those.

  The setup yields a new approach to attack the subspace problem. In
  particular, we deduce the main results of Ringel and Schmidmeier for
  nilpotency degree $p$ from properties of the category of vector
  bundles on the weighted projective line of weight type $(2,3,p)$,
  obtained by Serre construction from the triangle singularity
  $x^2+y^3+z^p$. For $p=6$ the Ringel-Schmidmeier classification is
  thus covered by the classification of vector bundles for tubular
  type $(2,3,6)$, and then is closely related to  Atiyah's
  classification of vector bundles on a smooth elliptic curve.

  Returning to the general case, we establish that the stable
  categories associated to vector bundles or invariant subspaces of
  nilpotent operators may be naturally identified as triangulated
  categories. They satisfy Serre duality and also have tilting objects
  whose endomorphism rings play a role in singularity theory. In fact,
  we thus obtain a whole sequence of triangulated (fractional)
  Calabi-Yau categories, indexed by $p$, which naturally form an
  ADE-chain.
\end{abstract}

\section{Introduction and main results}

In recent work Ringel and Schmidmeier thoroughly studied the
classification problem for invariant subspaces of nilpotent linear operators
(in a graded and ungraded version) \cite{Ringel:Schmidmeier:2006,
  Ringel:Schmidmeier:2008a, Ringel:Schmidmeier:2008b}. This problem
has a long history and can actually be traced back to Birkhoff's
problem ~\cite{Birkhoff:1934},
dealing with the classification of subgroups of finite abelian
$p$-groups. We note that Simson~\cite{Simson:2002} determined the
complexity for the classification of indecomposable objects, depending
on the nilpotency degree. Even more generally, Simson considered the
classification problem for chains of invariant subspaces, without
however attempting an explicit classification. For additional
information on the history of the problem we refer
to~\cite{Ringel:Schmidmeier:2008b,Simson:2002}.
The main achievement of \cite{Ringel:Schmidmeier:2008b}
is such an explicit classification for $p\leq6$ where the case $p=6$,
yielding tubular type, is the most difficult one and very much related
to the representation theory of so-called tubular algebras, a problem
initiated and accomplished by Ringel in~\cite{Ringel:tubular}.
In the present paper we describe an unexpected access to the invariant
subspace problem for graded nilpotent operators through the theory of
weighted projective lines.

Our approach focusses on the categorical structure and the global aspects (Serre duality, tilting, Calabi-Yau dimension) of the invariant subspace problem, and yields complete and satisfying results making use of the knowledge of the structure of vector bundles on a weighted projective line. The study complements the treatment from~\cite{Ringel:Schmidmeier:2008b}, which is in the spirit linear algebra and more explicit concerning the structure of individual representations. Our study further links the problem with other
mathematical subjects (singularities, vector bundles, Cohen-Macaulay
modules, Calabi-Yau categories) and largely enhances our knowledge
about the original problem. In particular, our treatment yields a uniform treatment of three formerly unrelated problems, each forming a socalled ADE-chain: the study of triangle singularities of type $(2,3,p)$,  the invariant subspace problem of linear operators which are nilpotent of degree $p$, and finally the representation theory of an equioriented quiver of Dynkin type $\AA_{2(p-1)}$ equipped with all nilpotency relations of degree $3$.

Let $\XX=\XX(2,3,p)$ denote the weighted projective line of weight
type $(2,3,p)$, where the integer $p$ is at least
$2$. Following~\cite{Geigle:Lenzing:1987}, the category $\coh\XX$ of
coherent sheaves on $\XX$ is obtained by applying Serre's
construction~\cite{Serre:fac} to the (suitably graded) \emph{triangle singularity} $x_1^2+x_2^3+x_3^p$.

The properties of $\coh\XX$ are very similar to the properties of the category of (algebraically) coherent sheaves on a smooth projective curve or of the category of (analytically) coherent sheaves on a compact Riemann surface. Common to the three categories is that they are hereditary, that is, extensions $\Ext{i}{}{-}{-}$ for degree $i\geq2$ vanish. Further each coherent sheaf $X$ decomposes into a direct sum of a locally free sheaf $E$ (a vector bundle) and a sheaf having finite support. This yields the concept of \emph{rank} of $X$ defined as the rank of the locally free sheaf $E$.  Another property shared by these categories is that the Euler characteristic of the geometric object is a measure for the complexity of the category of coherent sheaves, especially for the category of vector bundles $\vect\XX$.  On the other hand, a property characteristic for weighted projective lines is the existence of a tilting object $T$ for the category of coherent sheaves on a weighted projective line~\cite{Lenzing:1997}. This means an object $T$ where all self-extensions $\Ext1{}{T}{T}$ vanish and which, moreover, generates the category by forming extensions, kernels of monomorphisms and cokernels of epimorphisms. This in turn implies a close relationship to the representation theory of the finite dimensional endomorphism algebra $A$ of $T$, formally expressed in an equivalence of the bounded derived categories $\Der{\coh\XX}$ and $\Der{\mmod{A}}$ of coherent sheaves on $\XX$ and finite dimensional modules over $A$, respectively. In the non-weighted situations such a tilting object only exists in the case of the projective line or, in the case of Riemann surfaces, for the Riemann sphere.

We recall that the Picard group of
$\XX$ is naturally isomorphic to the rank one abelian group
$\LL=\LL(2,3,p)$ on three
generators $\vx_1,\vx_2,\vx_3$ subject to the relations
$2\vx_1=3\vx_2=p\vx_3$. Up to isomorphism the line bundles
are therefore given by the system $\Ll$ of twisted structure sheaves
$\Oo(\vx)$ with $\vx\in\LL$. A key aspect of our paper is a properly
chosen subdivision of the system  $\Ll$ of all line bundles into two
disjoint classes $\Pp$ and $\Ff$ of line bundles, called
\emph{persistent} and \emph{fading}, respectively. This subdivision
arises from the partition of $\LL$ into the subsets
$\PP=\ZZ\vx_3\sqcup(\vx_2+\ZZ\vx_3)$ and $\FF=\LL\setminus \PP$, each
consisting of cosets modulo $\ZZ\vx_3$.

Let $[\Ff]$ denote
the ideal of all
morphisms in the category $\vect\XX$ of vector bundles which factor through a finite direct sum of
fading line bundles. We recall that a \emph{Frobenius category} is an
exact category (Quillen's sense) which has enough projectives and
injectives, and where the projective and the injective objects
coincide.

\begin{Thm-A} \label{thm:A}
  Assume $\XX$ has weight type $(2,3,p)$ with $p\geq2$. Then the
  following holds.
\begin{enumerate}
\item[(1)] The category $\vect\XX$ is a Frobenius category with the
  system $\Ll$ of all line bundles as the indecomposable
  projective-injective objects.
\item[(2)] The factor category $\vect\XX/[\Ff]$ is a Frobenius
  category with the system $\Pp$ of persistent line bundles forming the full subcategory $\Pp/[\Ff]=\Ll/[\Ff]$ of
  indecomposable projective-injective objects; we use the notation $\ulPp$.
\item[(3)] The stable categories $\vect\XX/[\Ll]$ and
  $(\vect\XX/[\Ff])/(\Pp/[\Ff])$ are naturally equivalent as
  triangulated categories; we use the notation $\svect\XX$.
\end{enumerate}
\end{Thm-A}

\begin{Lemma-B} \label{lemma:B}
The category $\ulPp$ is equivalent to the path category of the quiver
\begin{equation} \label{eqn:ladder}
\def\ci{\circ}
\xymatrix@R18pt@C15pt{
 \cdots\ar[r]&\ci\ar[d]_y\ar[r]^x &\ci \ar[d]_y\ar[r]^x &
 \ci\ar[d]_y\ar[r]^x
 &\cdots\ar[r]^x&\ci\ar[d]_y\ar[r]^x&\ci\ar[d]_y\ar[r]&\cdots\\
\cdots\ar[r]&\ci \ar[r]_x       &\ci \ar[r]_x       &\ci\ar[r]_x
&\cdots\ar[r]_x& \ci\ar[r]_x&\ci\ar[r]&\cdots\\}
\end{equation}
modulo the ideal given by all commutativities $xy=yx$ and all
nilpotency relations $x^p=0$.
\end{Lemma-B}
Hence the category of finite dimensional contravariant $k$-linear
representations of the
above quiver with relations is naturally isomorphic to the category
$\mod{\ulPp}$ of finitely presented right modules over $\ulPp$. We
call $\ulPp$ (and by
abuse of language sometimes also the quiver~\eqref{eqn:ladder}) the (infinite)
\emph{$p$-ladder}. Clearly a
right $\ulPp$-module is exactly a morphism $U\ra M$ between two
$\ZZ$-graded $k[x]/(x^p)$-modules, where $x$ gets degree $1$. The
category $\nilop{p}$ consists of all those morphisms which are
monomorphisms. As a full subcategory $\nilop{p}$ is extension-closed
in $\mod{\ulPp}$, hence $\nilop{p}$ inherits an exact structure which
is actually Frobenius with the projectives from $\mod{\ulPp}$ as the
projective-injective objects.   Note further that  $\ZZ$ acts on
$\nilop{p}$ by degree shift,
denoted by $s$.

\sloppy
\begin{Thm-C} \label{thm:C}
Assume $\XX$ has weight type $(2,3,p)$. Then the functor
$$
\Phi\colon\vect\XX \lra\mod{\ulPp}, \quad E\mapsto \ulPp(-,E)
$$
induces equivalences $\Phi\colon\ffrac{\vect\XX}{[\Ff]}\lra
\nilop{p}$ of Frobenius categories and
$\ulPhi\colon\svect\XX\lra\snilop{p}$ of triangulated categories,
respectively. Moreover, under the functor $\Phi$ the degree shift by
$\vx_3\in\LL$ on $\vect\XX$ corresponds to the degree shift $s$ by
$1\in\mathbb{Z}$ on $\mod\ulPp$ and its subcategory $\nilop{p}$.
\end{Thm-C}
\fussy
This theorem implies (most of) the results
from~\cite{Ringel:Schmidmeier:2008b} from results on the categories
$\vect\XX$; it further has a significant number of additional
consequences. For instance, we show that the triangulated category
$\svect\XX=\snilop{p}$, has Serre duality and admits a tilting
object. Indeed, we give explicit constructions for two tilting objects
$T$ and $T'$ in $\vect\XX$ with non-isomorphic endomorphism rings,
yielding by Theorem~C explicit tilting objects for $\snilop{p}$.
The two tilting objects have
$2(p-1)$ pairwise indecomposable summands. The  endomorphism algebra
of $T$ is the representation-finite Nakayama algebra $A(2(p-1),3)$
given by the quiver
\begin{equation}\label{eqn:nakayama}
1\up{x}2\up{x}3\up{x}4\up{x}\cdots \up{x}2p-3 \up{x} 2(p-1)
\end{equation}
with all nilpotency relations $x^3=0$. Let $[1,n]$ denote the linearly
ordered set $\set{1,2,\ldots,n}$. Then the endomorphism ring of $T'$
is given as the incidence algebra $B(2,p-1)$ of the poset
$[1,2]\times[1,p-1]$, that is the algebra given by the fully
commutative quiver
\small
\begin{equation}\label{eqn:rectangle}
\xymatrixcolsep{0.5pc}
\xymatrix@!C=36pt@R18pt{ 1\ar @{->}[r]\ar
  @{->}[d]\ar@{..}[rd] & 2\ar
  @{->}[r]\ar @{->}[d]\ar@{..}[rd] &3\ar[d]\ar[r]&  \cdots \ar
  @{->}[r] & p-2\ar @{->}[r]\ar @{->}[d]\ar@{..}[rd] & p-1\ar
  @{->}[d]\\
  1'\ar @{->}[r] & 2'\ar @{->}[r] &3'\ar[r] &\cdots \ar @{->}[r] & (p-2)'\ar
  @{->}[r] & (p-1)'. }
\end{equation}\normalsize
It is well-known that rectangular diagrams of this shape appear in
singularity theory, see for instance
\cite{Gabrielov:Dynkin,Ebeling:book}.

\sloppy
Algebraically, an established method to investigate the complexity of
a singularity is due to R.~Buchweitz~\cite{Buchweitz}, later revived
by D.~Orlov~\cite{Orlov:2009} who primarily deals with the graded
situation. Given the $\LL$-graded triangle singularity
$S=k[x_1,x_2,x_3]/(x_1^2+x_2^3+x_3^p)$ this amounts to consider the
Frobenius category $\CMgr\LL{S}$ of $\LL$-graded maximal
Cohen-Macaulay modules, its associated stable category $\sCMgr\LL{S}$
and the \emph{singularity category of $S$}
defined as the quotient
$\Dsing\LL{S}=\Der{\modgr\LL{S}}/\Der{\projgr\LL{S}}$. It is shown in
\cite{Buchweitz,Orlov:2009} that the two constructions yield naturally
equivalent triangulated categories $\sCMgr\LL{S}=\Dsing\LL{S}$. It
further follows from \cite{Geigle:Lenzing:1987} that sheafification
yields natural equivalences $\CMgr\LL{S}\up\sim\vect\XX$ with the
indecomposable projective $\LL$-graded $S$-modules corresponding to
the line bundles on $\XX$, and then inducing natural identifications
\begin{equation}\label{eq:threeCY}
\Dsing\LL{S}=\sCMgr\LL{S}=\svect\XX, \textrm{ where }\XX=\XX(2,3,p).
\end{equation}
In particular, comparing the sizes of the triangulated categories
$\Der{\coh\XX}$ and $\svect\XX$
by the ranks of their Grothendieck groups yields
$$\rank (\Knull{\svect\XX})-\rank (\Knull{\coh\XX})=p-6,$$ a formula
nicely illustrating the effects of (an $\LL$-graded version of)
Orlov's theorem~\cite{Orlov:2009}.
Moreover, for each $p$ the triangulated categories from (\ref{eq:threeCY}) are
fractional Calabi-Yau where, up to cancelation, the Calabi-Yau
dimension equals $1-2\chi_\XX$. Here $\chi_\XX=1/p-1/6$ is the
orbifold Euler characteristic of $\XX$. By Theorem~C all
these assertions transfer to properties of $\snilop{p}$. For details
and further applications we refer to Section~\ref{sect:applications}.
\fussy

The structure of the paper is as follows. In Section~\ref{sect:basics}
we recall some basic properties of weighted projective lines. In Section~\ref{sect:covers} we survey fundamental properties of projective covers and injective hulls in $\vect\XX$. There we also introduce an important class of vector bundles of rank two, called Auslander bundles. These will play a key role for the proofs of the main results given in Section~\ref{sect:proofs}.
Section~\ref{sect:applications} is devoted to applications
concerning the categories $\nilop{p}=\vect\XX/[\Ff]$ and $\snilop{p}=\svect\XX$. In Appendix~\ref{sect:appendix} we present a tilting object for the category $\svect\XX$ which is important for the applications discussed in Section~\ref{sect:applications}.

\section{Definitions and basic properties} \label{sect:basics}

We recall some basic notions and facts about weighted projective
lines. We restrict our treatment to the case of three weights.  So
let $p_1,\,p_2,\,p_3 \geq 2$ be integers, called weights.
Denote by $S$ the commutative algebra
$$S=\frac{k[X_1,X_2,X_3]}{\bigl(X_1^{p_1}+X_2^{p_2}+X_3^{p_3}\bigr)}
=k[x_1,x_2,x_3].$$
Let $\LL=\LL(p_1,p_2,p_3)$ be the abelian group
given by generators $\vx_1,\,\vx_2,\,\vx_3$ and defining relations
$p_1 \vx_1=p_2 \vx_2=p_3 \vx_3=:\vc$. The $\LL$-graded algebra $S$ is
the appropriate object to study the triangle singularity
$x_1^{p_1}+x_2^{p_2}+x_3^{p_3}$.
The element $\vc$ is called the
\emph{canonical element}. Each element $\vx\in\LL$ can be written in
canonical form
\begin{equation}
  \label{eq:normalform}
  \vx=n_1 \vx_1+ n_2 \vx_2+n_3 \vx_3+m\vc
\end{equation}
with unique $n_i$, $m\in\mathbb{Z}$, $0\leq n_i <p_i$.

The algebra $S$ is $\LL$-graded by setting $\deg x_i=\vx_i$
($i=1,2,3$), hence $S=\bigoplus_{\vx\in\LL}S_{\vx}$. By an
$\LL$-graded version of
the Serre construction~\cite{Serre:fac}, the weighted
projective line $\XX=\XX(p_1,p_2,p_3)$ of weight type $(p_1,p_2,p_3)$
is given by its category of coherent sheaves
$\coh\XX=\mod^{\LL}(S)/\mod_0^{\LL}(S)$, the quotient category of
finitely generated $\LL$-graded
modules modulo
the Serre subcategory of graded modules of finite length.  The abelian
group $\LL$ is ordered by defining the positive cone
$\{\vx\in\LL\mid\vx\geq\vec{0}\}$ to consist of the elements of the
form $n_1 \vx_1 +n_2 \vx_2 +n_3 \vx_3$, where $n_1,\,n_2,\,n_3 \geq
0$.  Then $\vx\geq\vec{0}$ if and only if the homogeneous component
$S_{\vx}$ is non-zero, and equivalently, if in the normal
form~\eqref{eq:normalform} of $\vx$ we have $m\geq 0$.

The image $\Oo$ of $S$ in $\mod^{\LL}(S)/\mod^{\LL}_0 (S)$ serves as
the structure sheaf of $\coh\XX$, and $\LL$ acts on the above data,
in particular on $\coh\XX$, by degree shift. Each line bundle has the
form $\Oo(\vx)$ for a uniquely determined $\vx$ in $\LL$, and we have
natural isomorphisms $$\Hom{}{\Oo(\vx)}{\Oo(\vy)}=S_{\vy-\vx}.$$
Defining the \emph{dualizing element} from $\LL$ as
$\vom=\vc-(\vx_1+\vx_2+\vx_3)$, the category $\coh\XX$ satisfies Serre
duality in the form $$\dual\Ext{1}{}{X}{Y}=\Hom{}{Y}{X(\vom)}$$
functorially in $X$ and $Y$. Moreover, Serre duality implies the
existence of almost split sequences for $\coh\XX$ with the
Auslander-Reiten translation $\tau$ given by the shift with $\vom$.

It is now established that an algebraic analysis of an $\LL$-graded singularity $S$ focusses on the \emph{singularity category} $\Sing{\LL}{S}={\Der{\modgr{\LL}{S}}}/{\Der{\projgr{\LL}{S}}}$. The singularity category is a good measure for the complexity of a singularity; in particular, $\Sing{\LL}{S}=0$ if and only if $S$ has finite (graded) global dimension, see~\cite{Buchweitz} for the ungraded and~\cite{Orlov:2009} for the $\ZZ$-graded case.

An often more accessible incarnation of $\Sing{\LL}{S}$ is given by the stable category $\sCMgr{\LL}{S}$ of $\LL$-graded Cohen-Macaulay modules (provided $S$ is graded Gorenstein, a hypothesis satisfied in our case.) In our case a more tractable version is available in the form of the stable category $\sMFgr{\LL}{S}$ of \emph{matrix factorizations}. A competing representation is provided by the fact that $\CMgr{\LL}{S}=\vect\XX$, yielding finally the most convenient access: the stable category $\svect\XX$ of vector bundles on $\XX$.
An introduction to CM-modules and matrix factorizations is found in \cite{Yoshino:1990}. As an introduction to singularity theory we recommend the book of Ebeling~\cite{Ebeling:book}. Concerning links between representation theory and singularities we refer to the introductory article~\cite{Reiten:1999} by I.~Reiten.

The category $\vect\XX$ carries the structure of a Frobenius
category such that the system $\Ll$ of all line bundles is the system
of all indecomposable projective-injectives,
see~\cite{Kussin:Lenzing:Meltzer:2010pre}: A sequence $\eta\colon 0\ra
E'\up{\alpha} E\up{\beta} E''\ra 0$ in $\vect\XX$ is \emph{distinguished exact} if
all the sequences $\Hom{}{L}{\eta}$ with $L$ a line bundle are exact
(equivalently all the sequences $\Hom{}{\eta}{L}$ are exact). In this case we say that $\alpha$ (resp.\ $\beta$) is a \emph{distinguished monomorphism} (resp.\ \emph{distinguished epimorphism}).
Each distinguished exact sequence is further exact in the abelian category $\coh\XX$.

By~\cite{Happel}, the stable category $$\svect\XX=\vect\XX/[\Ll]$$
therefore is triangulated. It is shown in~\cite{Kussin:Lenzing:Meltzer:2010pre}
that the triangulated category $\svect\XX$ is Krull-Schmidt with Serre
duality induced
from the Serre duality of $\coh\XX$. The triangulated category
$\svect\XX$ is
homologically finite. Moreover, we will see in Appendix~\ref{sect:appendix} that $\svect\XX$ has
a tilting object.

It is shown in~\cite{Geigle:Lenzing:1987} that the quotient functor
$q\colon\mod^{\LL}(S)\ra\coh\XX$ induces an equivalence
$\CM^{\LL}(S)\up{\sim}\vect\XX$, where $\CM^{\LL}(S)$ denotes the
category of $\LL$-graded (maximal) Cohen-Macaulay modules over $S$.
Under this equivalence indecomposable graded projective modules over
$S$ correspond to line bundles in $\vect\XX$, resulting in a natural
equivalence
$$\sCM^{\LL}(S)\simeq\svect\XX$$
used from now on as an identification.
Stable categories of (graded) Cohen-Macaulay modules play an important
role in the analysis of singularities,
see~\cite{Buchweitz,KST-1,KST-2,Orlov:2009}.

Each coherent sheaf on $\XX$ has the form $X=E\oplus X_0$ where $X_0$ is the largest subobject of finite length (then having finite support in $\XX$) and $E$ is a vector bundle. By $\vect\XX$ we denote the full subcategory of $\coh\XX$ given by all vector bundles.
Let $\delta:\LL\ra\ZZ$ be the homomorphism given on the generators $\vx_i$, $i=1,2,3$, by $\delta(\vx_i)=\bp/p_i$, where $\bp=\lcm(p_1,p_2,p_3)$. There are two important linear forms on the \emph{Grothendieck group} $\Knull{\coh\XX}$ of $\coh\XX$, called \emph{rank} and \emph{degree}.
The rank (degree) is characterized by the fact that $\rank{\Oo(\vx)}=1$ (resp.\ $\deg{\Oo(\vx)}=\delta(\vx)$) for all $\vx$ from $\LL$, see~\cite{Geigle:Lenzing:1987}. The rank is zero on all sheaves of finite length and $>0$ on non-zero vector bundles; further the degree is $>0$ on each simple sheaf. These basic properties show that any non-zero $X$ from $\coh\XX$ has non-zero rank or non-zero degree, implying that the \emph{slope} $\slope{X}=\deg{X}/\rank{X}$ is a properly defined member of $\QQ\cup\set{\infty}$. The slope $\slope{X}$ gives a rough information on the position of $X$ in the category $\coh\XX$ since $\Hom{}{X}{Y}$ is non-zero (resp.\ zero) if $\slope{Y}-\slope{X}$ (resp.\ $\slope{X}-\slope{Y}$) is large. Further we will need a refinement of the degree, the \emph{determinant} homomorphism $\determ\colon\Knull{\coh\XX}\ra \LL$ characterized by $\determ{\Oo(\vx)}=\vx$ for all $\vx\in\LL$, see~\cite{Lenzing:Meltzer:1993}. Note that $\deg{X}=\delta(\det{X})$.

\section{Projective covers and Auslander bundles} \label{sect:covers}

Since $S$ is $\LL$-graded
local with maximal graded ideal $(x_1,x_2,x_3)$, the category
$\modgr{\LL}{S}$, hence also $\CMgr{\LL}{S}=\vect{\XX}$ has projective
covers. Translating from Cohen-Macaulay modules to vector bundles,
we have to exhibit an \emph{irredundant} system of \emph{generators for the functor} $\Ll(-,E)$. That is, we have to find morphisms $L_i\stackrel{u_i}\ra
E$, $L_i\in\Ll$, $i=1,\ldots,n$ such that each morphism $f\colon L\ra E$, with
$L\in\Ll$, has the form $f=\sum_{i=1}^n u_i \alpha_i$ for some
$\alpha_i\colon L\ra L_i$ and, moreover, no proper subsystem of $u_1,\ldots,u_n$ has this property. In this case $u=(u_1,\dots,u_n)\colon\bigoplus
L_i\ra E$ is the projective cover of $E$ in $\vect{\XX}$. Existence for and properties of injective hulls follow from their projective counterpart by applying vector bundle duality ${\vect\XX}^{op}\ra\vect\XX$, $X\mapsto \mathcal{H}om(X,\Oo)$.

\begin{proposition} \label{prop:irredundant}
Assume $E$ is a vector bundle, and $u_i\colon L_i\ra E$, $i=1,\ldots,n$ are morphisms with line bundles $L_i$. We put $u=(u_1,\ldots,u_n):\bigoplus_{i=1}^nL_i\ra E$, and denote by $F$ the kernel of $U$ and by $v=(v_1,\ldots,v_n)^t$ the inclusion of $U$ into $\bigoplus_{i=1}^n L_i$. Then $(u_1,\ldots,u_n)$ is a generating system for $\Ll(-,E)$ if and only if the the sequence
\begin{equation}
\mu: 0\lra F\up{v} \bigoplus_{i=1}^n L_i \up{u} E \lra 0
\end{equation}
is distinguished exact. In this case, the following assertions are equivalent:

(i) $E$ has no line bundle summand, and $(u_1,\ldots,u_n)$ is irredundant.

(ii) $F$ has no line bundle summand, and $(v_1,\ldots,v_n)$ is irredundant.
\end{proposition}
\begin{proof}
The first claim follows immediately from the definitions. Next we show $(i)\Rightarrow (ii)$.  Assume  that $(v_1,\ldots,v_n)$ is not irredundant and, say, $(v_1,\ldots,v_{n-1})$ is already generating $\Ll(F,-)$. With $P'=\bigoplus_{i=1}^{n-1}L_i$ there results a commutative diagram
$$
\xymatrix@C22pt@R14pt{
        &       &            &0            &     0    &  \\
\mu': & 0\ar[r] &F\ar[r]^{v'} & P' \ar[r]^{u'}\ar[u] & E''\ar[r]\ar[u]& 0\\
\mu:  & 0\ar[r] &F\ar@{=}[u]\ar[r]^{(v',v_n)^t} & P'\oplus L_n \ar[r]^{(u',u_n)}\ar[u]_{\pi_n}& E  \ar[r]\ar[u]_{\pi_n''}& 0\\
      &         &                               & L_n\ar@{=}[r]\ar[u]_{\iota_n} &L_n\ar[u]_{\iota_n''}& \\
      &         &                               &0\ar[u]& 0\ar[u]& & & \\
}
$$
where the rows are distinguished exact, and where the central column splits. It follows that the right column is distinguished and then splits, since $L_n$ is relative injective in $\vect\XX$. Hence $L_n$ is a line bundle summand of $E$, contradicting assumption (i).

Assume next that $L$ is a line bundle such that $F=F'\oplus L$, and write $v=(v',w)$. As a composition of two distinguished monomorphisms then $w=(w_1,\ldots,w_n)$, where $w_i:L\ra L_i$, is also a distinguished monomorphism, hence splits since $L$ is injective in $\vect\XX$. Writing $P=\bigoplus_{i=1}^nL_i$ there exists $p:P\ra L$, where $p=(p_1,\ldots,p_n)$, with $1_L=\sum_{i=1}^n p_iw_i$. One of the summands, say $p_nw_n$ must be non-zero. Thus composition $L\up{w_n}L_n\up{p_n}L$ is an isomorphism implying that $pr_n\circ w=w_n: L \ra L_n$ is an isomorphism where $pr_n:\bigoplus_{i=1}^nL_i\ra L_n$ denotes the $n$th projection. To simplify notation, we identify $L$ with the direct summand $w(L)$ of $P$. We have shown that $pr_n:P\ra L_n$ restricts to an isomorphism $L_n\ra L$, yielding a splitting $P=L\oplus \bigoplus_{i=1}^{n-1}L_i$. We may then rewrite $\mu$ as
$$
\def\mat{{\arraycolsep2pt\renewcommand\arraystretch{0.4}\left(\begin{array}{cc}*&0\\ *&1_L\end{array}\right)}}
\xymatrix{0\ar[r]& F'\oplus L \ar[rr]^(.4){\mat}&& \left(\bigoplus_{i=1}^{n-1}L_i\right)\oplus L\ar[rr]^(.62){(u_1,\ldots,u_{n-1},u_n)}&&E\ar[r]& 0}
$$
where $u'=(u_1,\ldots,u_{n-1})$. Exactness now implies $u_n=0$, hence $(u_1,\ldots,u_{n-1})$   generates $\Ll(-,E)$ thus yielding a contradiction. This finishes the proof of $(i)\Rightarrow (ii)$. The implication $(ii)\rightarrow(i)$ follows from the previous one by vector bundle duality $\vect\XX \ra \vect\XX$, $X\mapsto\check{X}=\mathcal{H}om(X,\Oo)$, by observing that the duality preserves line bundles and exact sequences.
\end{proof}
A morphism $u\colon P\ra E$ with $P=\bigoplus_{i=1}^n L_i$, $L_i\in\Ll$, projective is called a \emph{projective cover} or hull of $E$ if $u$ is a distinguished epimorphism, and each morphism $h:X\ra E$ such that $u\circ h$ is a distinguished epimorphism is itself a distinguished epimorphism. The projective cover of $E$ is unique up to isomorphism. The concept of an \emph{injective hull} is dual.
\begin{corollary} \label{cor:cover_hull_duality}
In the absence of line bundle summands for $E$ or $F$ in \eqref{eq:proj-cover-ausl-bundle-L}, the injective hull of $E$ equals the projective hull of $F$. Moreover,
assume $E$ and $F$ have rank $\geq2$. Then $E$ is indecomposable if and only if $F$ is indecomposable.
\end{corollary}
\begin{proof}
We only need to show the last assertion. Assume for instance that $E$ is indecomposable, and $F$ has a non-trivial decomposition $F=F'\oplus F''$. Then, using $\Im$ for the injective hull, we have $E=\Im(F)/F=\Im(F')/F'\oplus\Im(F'')/F''$ implying that $E$ is decomposable, contrary to our assumption.
\end{proof}
For later use we note the following surprising result.
\begin{proposition} \label{prop:surprising}
Let $F$ be a vector bundle with injective hull $\xymatrix@!C=9pt{F\ar[rr]^(.35){(v_1,\ldots,v_n)}&&\bigoplus_{i=1}^n L_i.}$ Then each $v_i\colon F\ra L_i$ is an epimorphism in $\coh\XX$.
\end{proposition}
\begin{proof}
For $i=1,\ldots,n$ let $L_i'$ be the image of $v_i$ in $L_i$. Note that $L'_i$ is again a line bundle. Let $v_i'\colon F\ra L'_i$ be the morphism induced by $v_i$, and put $v'=(v'_1,\ldots,v'_n)$. Clearly, $F\up{v'}\bigoplus_{i=1}^n L'_i$ is a distinguished monomorphism, then yielding a distinguished monomorphism $h:\bigoplus_{i=1}^n L_i\ra \bigoplus_{i=1}^n L'_i$ by the defining property of the injective hull $\Im(F)$.  Passing to degrees we obtain
$$
\sum_{i=1}^n \deg{L_i} \leq \sum_{i=1}^n\deg{L'_i} \leq \sum_{i=1}^n\deg{L_i}.
$$
The first inequality uses that the cokernel of $h$ has rank zero, and then finite length. The second inequality uses the inclusions $L_i'\incl L_i$.
We hence obtain $\deg{L'_i}=\deg{L_i}$ for each $i=1,\ldots,n$ and then $L'_i=L_i$, proving our claim.
\end{proof}

Nearly all information one has on the category $\vect{\XX}$ is
obtained by using the existence of line bundle filtrations for vector bundles, and
then invoking the very explicit knowledge one has for morphism and
extension spaces between line bundles~\cite{Geigle:Lenzing:1987}. Recall in this context that $\Hom{}{\Oo(\vx)}{\Oo(\vy)}=S_{\vy-\vx}$ and $\Ext1{}{\Oo(\vx)}{\Oo(\vy)}=\dual{S_{\vx-\vy+\vom}}$. Passage to the stable category
$\svect\XX$ kills the carriers of all this information. A (partial) replacement is
found in the Auslander bundles. By definition, the \emph{Auslander bundle} $E=E(L)$
is obtained as the extension term of the almost split
sequence
\begin{equation}\label{eq:almost-split-auslander}\eta:\ 0\lra L(\vom)\up\alpha E(L)\up\beta L\lra 0
\end{equation}
 where $L$ denotes a line bundle.
Note for this that $\dual\Ext{1}{}{L}{L(\vom)}=\End{}{L}=k$, such that $E$ is
uniquely determined up to isomorphism. The next result points to the importance of
having exactly three weights. (For the proof of this statement, as in~\cite{Geigle:Lenzing:1987} we temporarily allow an arbitrary number of weights.)

\begin{proposition} \label{prop:auslander:exceptional}
Let $t=t_\XX$ denote the number of weights of $\XX$. Then the Auslander bundle $E=E(L)$ has trivial endomorphism ring $\End{}{E}=k$ if and only if $t\geq3$. Moreover, $E$ is exceptional if and only if $t=3$. In this case
, $E$ is determined by its class $[E]$ in the Grothendieck group $\Knull{\coh\XX}$.
\end{proposition}

\begin{proof}
From the exact Hom-Ext sequence $(L,\eta)$ we first obtain $\Hom{}{L}{E}=0=\Ext1{}{L}{E}$ since the connecting homomorphism is an isomorphism. Next, from the exact Hom-Ext sequence $(L(\vom),\eta)$ we deduce  $\Hom{}{L(\vom)}{L}=k^{4-t}$ and $\Ext1{}{L(\vom)}{L}=k^{t-3}$ with the convention that $k^n=0$ for $n<0$. The above uses the normal form expressions $-\vom=\sum_{i=1}^t\vx_i+(2-t)\vc$ and $2\vom=\sum_{i=1}^t(p_i-2)\vx_i+(t-4)\vc$. Finally, application of $(-,E)$ to $\eta$ yields exactness of $0\ra (L(\vom),E)\ra (E,E)\ra (L,E) \ra {}^1(L(\vom),E)\ra{}^1(E,E)\ra{}^1(L,E)\ra 0$, hence $\End{}{E}=k^{4-t}$ and $\Ext1{}{E}{E}=k^{t-3}$. For the last assertion we refer to \cite[Prop.~4.4.1]{Meltzer:2004}.
\end{proof}
Except in cases $(2,2,n)$, the assignment $L\mapsto E(L)$ yields a natural bijection between line
bundles and Auslander bundles. We will show later (Corollary~\ref{prop:suspension}) that for three weights, all Auslander bundles are also
exceptional in $\svect{\XX}$.

\begin{lemma}\label{lemma:exists-map-from-AB}
We assume a weight triple. Let $X$ be an indecomposable bundle of rank $\geq 2$. Then there
exists an Auslander bundle $E$ and a morphism $u\colon E\ra X$ such
that $u\not\in [\Ll]$.
\end{lemma}
\begin{proof}
  Choose a line bundle $L'$ of maximal degree (=slope) such that there
  is a morphism $0\neq h'\colon L'\ra X$. The almost split sequence
  $\eta\colon 0\ra L'\up{\alpha}E\up{\beta}L'(-\vom)\ra 0$ yields a
  morphism $h\colon E\ra X$ with $h\alpha=h' \neq 0$. We show
  $h\not\in [\Ll]$: Otherwise there would be a factorization
  $h=\sum_{i=1}^n b_i a_i$ with morphisms $E\lup{a_i}L_i \lup{b_i} X$
  and line bundles $L_i$. Then we have $0\neq h\alpha=\sum_{i=1}^n b_i a_i \alpha$,
  yielding an index $i$ with non-zero composition $
   b_i a_i\alpha$. In particular $\slope{L'}\leq\slope{L_i}$ and
  $\Hom{}{L_i}{X}\neq 0$. By the choice of $L'$ we get
  $\mu{L'}=\slope{L}$, thus $a_i \alpha$ is an isomorphism. This
  implies that $\eta$ splits, a contradiction.
\end{proof}

\begin{proposition} \label{prop:cover_ausl_bundle}
We assume a weight triple. Let $E=E(L)$ be the Auslander bundle given by the sequence $0\ra L(\vom)\up{\alpha} E \up{\beta} L \ra 0$. Then there is a distinguished exact sequence
\begin{equation} \label{eq:proj-cover-ausl-bundle-L}
\mu:\xymatrix{ 0 \ar[r]& F \ar[rr]^(.3){(v_0,v_1,v_2,v_3)^t} &&L(\vom)\oplus \bigoplus_{i=1}^{3} L(-\vx_i)\ar[rr]^(.65){(u_0,u_1,u_2,u_3)} &&E\ar[r]&0}
\end{equation}
in $\vect\XX$ which defines the projective cover of $E$, where $F$ is a bundle of rank two, and exceptional in $\vect\XX$. Moreover, for weight type $(2,a,b)$ we have $F = E(-\vx_1)$.
\end{proposition}
We note that already for weight type $(3,3,3)$ the bundle $F$ is \emph{not} an Auslander bundle.
\begin{proof}
\underline{Step 1:}
Let $E$ be the Auslander bundle given by the almost-split sequence
 $\eta:\ 0\lra L(\vom)\stackrel{\alpha}\lra E\stackrel{\beta}\lra
L\lra 0.$ By the almost-split property, the maps $x_i\colon
L(-\vx_i)\ra L$ lift to maps $u_i\colon L(-\vx_i)\ra E$ (in fact
unique since
$\Ext{1}{}{L(-\vx_i)}{L(\vom)}\simeq\dual\Hom{}{L}{L(-\vx_i)}=0$). We
claim that the maps $u_0=\alpha\colon \overline{L}(\vom)\ra E$, $u_i\colon
L(-\vx_i)\ra E$ ($i=1,2,3$) form an irredundant system of generators of
$\Ll(-,E)$. Write $\overline{L}=L(-\vy)$, $\vy\in\LL$. Let
$f\colon\overline{L}\ra E$ be a map. The map $\beta\circ f\colon
L(-\vy)\ra L$ cannot be an isomorphism since $\eta$ does not
split. Hence involving $\Hom{}{L(-\vy)}{L}=S_{\vy}\subseteq
(x_1,x_2,x_3)$ we see $\beta\circ f=\sum_{i=1}^3 x_i f_i$ for some
$f_1,f_2,f_3$. Now, consider $g=f-\sum_{i=1}^3 u_i f_i$; we obviously
get $\beta\circ g=0$, so that $g$ has the form $g=\alpha\circ f_0 =u_0 \circ
f_0$. We have shown $f=\sum_{i=0}^3 u_i f_i$ establishing that
$u_0,\dots,u_3$ generate $\Ll(-,E)$. That the
system is irredundant follows from the fact that the line bundle summands of the
central term of \eqref{eq:proj-cover-ausl-bundle-L} are pairwise Hom-orthogonal.

\underline{Step 2:} We put $L_0=L(\vom)$ and $L_i=L(-\vx_i)$ for $i=1,2,3$. By Proposition~\ref{prop:irredundant} the sequence $\mu$ is distinguished exact and
then also exact in $\coh{\XX}$, implying that $F$ has rank $2$. We are
going to show that $F\simeq E(-\vx_1)$. Note that $F$ is
indecomposable; otherwise $F$ would be the direct sum of two line
bundles, hence $0$ in $\svect{\XX}$, contradicting $F\simeq E[-1]\neq
0$ in $\svect{\XX}$. We claim, moreover, that $F$ is exceptional in
$\coh{\XX}$, that is, satisfies $$\text{a)}\; \End{}{F}=k\quad \text{and}\quad
  b)\; \Ext{1}{}{F}{F}=0.$$ From $F\simeq E[-1]\in\svect{\XX}$ we
infer that $\sEnd{}{F}=k$. To prove a) it thus suffices to show that
each morphism $h\colon F\ra F$ having a factorization
$h=\alpha\circ v$ with $\alpha\colon\bigoplus_{i=0}^3L_i\ra F$, is already zero.  Indeed, otherwise
$v\circ\alpha\colon\bigoplus L_i\ra\bigoplus L_i$ is non-zero. Since $L_0,\ldots,L_3$ are pairwise Hom-orthogonal,  we obtain
$\End{}{\bigoplus L_i}=\bigoplus\End{}{L_i}$, and deduce that there exists an
$i\in\{0,\dots,3\}$ such that $v_i\circ\alpha_i\colon L_i\ra L_i$ is
non-zero, hence an isomorphism. Therefore $F$ decomposes, yielding a
contradiction.

Next we assume weight type $(2,a,b)$ and show $[F]=[E(-\vx_1)]$ holds in $\Knull{\coh\XX}$. There is a unique simple sheaf $S_2$ in the second exceptional point such that there is an exact sequence (1)~$0\ra L(-\vx_2)\up{x_2}L\ra S_2\ra0$, see~\cite{Geigle:Lenzing:1987}. By the assumption on the weight type we have $\vom-\vx_1=-\vx_2-\vx_3$. Hence, applying the shift by $-\vx_3$ to (1) yields an exact sequence (2)~$0\ra L(\vom-\vx_1)\up{x_2}L(-\vx_3)\ra S_2\ra 0$. Passing to classes in the Grothendieck group, we then obtain $[L]-[L(-\vx_2)]= [L(-\vx_3)]-[L(\vom-\vx_1)]$, and then invoking exactness of \eqref{eq:proj-cover-ausl-bundle-L} we obtain
$[E(-\vx_1)=[L(\vom-\vx_1)] = \sum_{i=1}^3[L(\vx_i)] -[L] =[F]$.
Since $E$ is exceptional and $[F]=[E(-\vx_1]$, then
  \begin{displaymath}
    1 = \euler{F}{F}=\dim\End{}{F}-\dim\Ext{1}{}{F}{F} = 1-\dim\Ext{1}{}{F}{F},
  \end{displaymath}
and $\Ext{1}{}{F}{F}=0$ follows. Thus $F$ is exceptional. Since exceptional objects $X$ in
$\coh{\XX}$ are determined by their class $[X]$, see~\cite[Prop.~4.4.1]{Meltzer:2004} it follows that $F\simeq E(-\vx_1)$.
\end{proof}

To show exceptionality of Auslander bundles in the triangulated category $\svect\XX$ the following argument is useful. For distinction we use the notation $\sHom{}{X}{Y}$ to denote morphism spaces in $\svect\XX$.
  \begin{lemma} \label{lemma:determinant}
  Assume $E$ and $F$ are vector bundles of rank two, and $u\colon E\ra F$ is nonzero in $\sHom{}{E}{F}\neq0$. Then $\determ(E)\leq\determ(F)$.
  \end{lemma}

  \begin{proof}
  Observe first that $u$ is a monomorphism, since otherwise the image of $u$ would be a line bundle. We thus obtain an exact sequence $0\ra E\ra F\ra C\ra 0$ with $C$ of finite length. Passage to determinants proves the claim since $\det{S}>0$ for each simple sheaf $S$,~\cite{Lenzing:Meltzer:1993}.
  \end{proof}

By \cite{Happel} the suspension [-1] in the stable category $\svect\XX$ is induced by taking the projective hull. With the above notations we thus obtain $E[-1]=F$.
\begin{proposition} \label{prop:suspension}
Assume a weight triple. Then the following holds:

(i) Each Auslander bundle is exceptional in $\svect\XX$.

(ii) The suspension functor $[2]$ and the degree shift $\sigma_0$ by $\vc$ are isomorphic as functors on $\svect\XX$.

(iii) For each Auslander bundle $E$ we have $\det{E[n]}-\det{E}=n\vc$ for each integer $n$.

(iv) Let $\sAa$ be the full subcategory of $\svect\XX$ formed by the Auslander bundles. For weight type $(2,a,b)$ the suspension functor $[1]$ and the degree shift $\sigma_1$ by $\vx_1$ are isomorphic as functors on $\sAa$.
\end{proposition}

\begin{proof}
The proof of assertion $(ii)$ invokes the theory of matrix factorizations for a hypersurface singularity $f$. It uses that the stable category of $\LL$-graded matrix-factorizations is naturally equivalent to the category $\sCMgr\LL{S}$, and yields that the second suspension is isomorphic to the shift by the degree of the singularity $f$, so in our case yields the degree shift by $\vc$, compare~\cite[Theorem~2.14]{KST-1} for the $\ZZ$-graded case.
Assertion (iii) is obtained by applying the determinant to the sequence~\eqref{eq:proj-cover-ausl-bundle-L}. Finally, assertion (iv) follows by observing that the explicit construction of $F=E[-1]$ by means of the sequencef~\eqref{eq:proj-cover-ausl-bundle-L} is functorial in $E$.

Concerning (i) we know already that $\End{}{E}=k$ such that $\sEnd{}{E}=k$ follows.  By Serre duality we further have $\sHom{}{E}{E[n]}=\dual{\sHom{}{E[n-1]}{E(\vom)}}$, and we have to prove that this expression is zero for each non-zero integer $n$. Assume for contradiction that it is non-zero for some integer $n\neq0$. Applying the determinant and using (iii)  implies that for an integer $n$ as above, the inequalities (a) $n\vc\geq0$ and (b) $(n-1)\vc\leq 2\vom$ hold. Now, (a) is violated for $n<0$ and (b) is violated for $n>0$, thus proving the claim.
\end{proof}

\section{Proofs} \label{sect:proofs}
From now on the weight type is always the triple $(p_1,p_2,p_3)=(2,3,p)$ with
$p\geq 2$. In this Section we provide the proofs for Theorem~A,
Theorem~C and Lemma~B. Note that only the proof for Lemma~B is
straightforward. By contrast the proofs for Theorems~A and~C are far
from obvious. Additionally they behave quite sensitive with respect to
a (re)arrangement of the steps involved.

\subsection*{Proof of Lemma~B}
To prepare the proof of Lemma~B we observe that the
category $\ulPp$ has the shape of an infinite ladder:
$$
\def\ci{\circ}
\xymatrix@R22pt@C22pt{
 \cdots\ar[r]&\Oo(-\vx_3) \ar[d]_{x_2}\ar[r]^{x_3} &\Oo
 \ar[d]_{x_2}\ar[r]^{x_3} & \Oo(\vx_3)\ar[d]_{x_2}\ar[r]^{x_3}
 &\Oo(2\vx_3)\ar[d]_{x_2}\ar[r]^{x_3}&\cdots\\
\cdots\ar[r]&\Oo(\vx_2-\vx_3) \ar[r]^{x_3}       &\Oo(\vx_2)
\ar[r]^{x_3}       &\Oo(\vx_2+\vx_3)\ar[r]^{x_3}&
\Oo(\vx_2+2\vx_3)\ar[r]^{x_3}&\cdots\\}
$$
where the \emph{upper bar} (resp.\ \emph{lower bar}) is formed by all
line bundles $\Oo(n\vx_3)$, (resp.\ $\Oo(\vx_2+n\vx_3)$) for an
arbitrary integer $n$.

Commutativity of the diagram~\eqref{eqn:ladder} follows from the
commutativity of $S$. Applying
$\Hom{\XX}{\Oo(\vx)}{\Oo(\vy)}=S_{\vy-\vx}$ it follows that each
morphism in $\Pp$, viewed as a full subcategory of $\vect\XX$, is a
linear combination of powers of $x_2$ and $x_3$. Next we observe that
$\Homstab{}{\Oo(\vx)}{\Oo(\vx+\vc)}=0$ holds for each $\vx\in\PP$.
Indeed $\Hom{\XX}{\Oo(\vx)}{\Oo(\vx+\vc)}$ is generated by $x_2^3$ and
$x_1^2$, moreover each of the two morphisms factors through a fading
line bundle ($\Oo(\vx+2\vx_2)$ and $\Oo(\vx+\vx_1)$, respectively).
Finally, we have $x_2x_3^{p-1}\neq0$ (and hence $x_3^{p-1}\neq0$) in
$\vect\XX/[\Ff]$ since there are no morphisms from $\Oo(\vx)$ to
$\Oo(\vx+\vx_2+(p-1)\vx_3)$ factoring through a fading line bundle.
Indeed, every $\vy\in\LL$ with $\vec{0}\leq \vy \leq \vx_2+(p-1)\vx_3$
is of the form $\vy=a\vx_2+b\vx_3$ with $a=0,1$ and $b=0,\ldots,p-1$,
implying that $\vy$ belongs to $\PP$.~\qed

\begin{lemma} \label{lem:ses} Let $L$ be a line bundle. Then for each
  integer $n\geq 1$ the following sequence is exact in $\coh\XX$.
\begin{equation} \label{eqn:pushout} \eta_n:0\lra L\lup{(x_1,x_2^n)^t}
L(\vx_1)\oplus L(n\vx_2)\lup{(-x_2^n,x_1)}L(\vx_1+n\vx_2)\lra
0.\end{equation}
\end{lemma}

\begin{proof}
  The exact sequence is obtained from the following pushout diagram in
  $\coh\XX$
  \small\begin{equation}
    \label{eq:pushout-diagram}
    \xymatrix@C16pt@R18pt{ & 0 \ar @{->}[d] & 0 \ar
    @{->}[d] & & \\ 0\ar @{->}[r]& L\ar @{->}[r]^-{x_2^n} \ar
    @{->}[d]_-{x_1}& L(n\vx_2) \ar @{->}[r] \ar @{->}[d]^-{x_1} & S_2^{(n)}
    \ar @{->}[r] \ar @{=}[d] &0\\
    0\ar @{->}[r]& L (\vx_1) \ar @{->}[r]^-{x_2^n} \ar @{->}[d] &
    L(\vx_1+n\vx_2)\ar @{->}[r] \ar @{->}[d] & S_2^{(n)} \ar
  @{->}[r]&0 \\ & S_1
    \ar @{=}[r] \ar @{->}[d] & S_1 \ar @{->}[d] & & \\ & 0 & 0 & & }
  \end{equation}\normalsize
  with $S_1$ a simple sheaf concentrated in $x_1$ and $S_2^{(n)}$ a
  sheaf of length $n$
  concentrated in $x_2$.
\end{proof}
Denote by $\check{E}=\mathcal{H}om(E,\Oo)$ the dual vector bundle, and note that $\Oo(\vx)=\Oo(-\vx)$ for $\vx\in\LL$.
\begin{lemma}
The functor $d:(\vect\XX)^{op}\lra \vect\XX$, $E\mapsto \check{E}(\vx_2)$,  defines a self-duality preserving the partition $\Ll=\Pp\sqcup\Ff$. In particular, $d$ induces self-dualities of $\vect\XX/[\Ff]$ and $\ulPp$.~\qed
\end{lemma}

For the further discussion our next result is of central
importance. It expresses a fundamental property of the partition
$\Ll=\Pp\sqcup\Ff$.

\begin{proposition} \label{prop:2-generated}
Let $L$ be a persistent line bundle. Then the following holds:

\begin{enumerate}

\item[(1)] The functor $\Ff(L,-)=\res{\Hom{}{L}{-}}{\Ff}$ is generated
  by $x_1,x_2^2$ if $L$
  belongs to the upper bar of $\Pp$ and by $x_1,x_2$ if $L$ belongs to the
  lower bar of $\Pp$. With the notation from \eqref{eqn:pushout} put $\eta=\eta_2$ (resp.\ $\eta=\eta_1$) if $L$ belongs to the upper (resp.\ lower) bar. With the exception of $L$, all terms of $\eta$ then belong to $\add\Ff$, and for each $F\in\add{\Ff}$ the sequence $\Hom{}{\eta}{F}$ is exact.

\item[(2)] The functor $\Ff(-,L)$ is generated by $x_1,x_2$ if $L$
  belongs to the upper bar of $\Pp$ and is generated by $x_1,x_2^2$ if
  $L$ belongs to the
  lower bar of $\Pp$.
\end{enumerate}
\end{proposition}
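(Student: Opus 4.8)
The plan is to analyze which homogeneous elements of $S$ give morphisms $L \to F$ (resp.\ $F \to L$) with $F$ a fading line bundle, using the identification $\Hom{\XX}{\Oo(\vx)}{\Oo(\vy)} = S_{\vy-\vx}$, and to track this through the coset structure $\PP = \ZZ\vx_3 \sqcup (\vx_2 + \ZZ\vx_3)$, $\FF = \LL \setminus \PP$. Since $L$ is persistent, we may assume $L = \Oo(n\vx_3)$ (upper bar) or $L = \Oo(\vx_2 + n\vx_3)$ (lower bar); by applying the shift automorphism $\vx_3$ it suffices to treat $n = 0$, i.e.\ $L = \Oo$ or $L = \Oo(\vx_2)$. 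For part (1), a map $L \to F$ corresponds to an element $\vy \geq \vec 0$ with $L(\vy) \in \Ff$, i.e.\ $\vy \notin \PP$ in the upper-bar case and $\vy \notin \vx_2 + \PP = \PP - \vx_2$ in the lower-bar case. Writing $\vy$ in canonical form $\vy = n_1\vx_1 + n_2\vx_2 + n_3\vx_3 + m\vc$ with $0 \le n_1 < 2$, $0 \le n_2 < 3$, $0 \le n_3 < p$, $m \ge 0$, one checks that $\vy \in \PP$ precisely when $n_1 = 0$ and $n_2 \in \{0,1\}$ (this uses the relations $2\vx_1 = 3\vx_2 = p\vx_3 = \vc$ to rewrite $\vc$; note $\vc = 3\vx_2$, so adding $\vc$ keeps $n_2$-type membership but the point is $\vx_1 \notin \PP$ and $2\vx_2 \notin \PP$ while $\vx_1 + \vx_2$, $\vx_1 + 2\vx_2$, etc.\ are also outside $\PP$). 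Hence for the upper bar the ``boundary'' monomials that first leave $\PP$ are $x_1$ (degree $\vx_1$) and $x_2^2$ (degree $2\vx_2$), and every fading degree $\vy$ above $\Oo$ is divisible by one of these; for the lower bar, starting from $\Oo(\vx_2)$ the first fading degrees are reached by $x_1$ and by $x_2$ (since $\vx_2 + \vx_2 = 2\vx_2 \notin \PP$). This yields the asserted generators, and the Koszul-type exactness of $\eta_1$, $\eta_2$ is immediate from Lemma~\ref{lem:ses} with $n = 2$ and $n = 1$ respectively.

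For the claims that, apart from $L$ itself, all terms of $\eta_i$ lie in $\add\Ff$, and that $\Hom{}{\eta_i}{F}$ is exact for $F \in \add\Ff$, I would argue as follows. That $L(\vx_1)$, $L(2\vx_2)$, $L(\vx_1 + 2\vx_2)$ (resp.\ $L(\vx_1)$, $L(\vx_2)$, $L(\vx_1+\vx_2)$) are fading is exactly the canonical-form computation above: each of the shift elements $\vx_1$, $2\vx_2$, $\vx_1 + 2\vx_2$ (resp.\ $\vx_1$, $\vx_2$, $\vx_1+\vx_2$) is itself outside $\PP$, and $\PP$ is a union of $\ZZ\vx_3$-cosets, so adding $n\vx_3$ keeps us outside. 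Exactness of $\Hom{}{\eta_i}{F}$ for $F$ a fading line bundle amounts to surjectivity of the map induced by the monomorphism $L \to L(\vx_1) \oplus L(n\vx_2)$, i.e.\ to the statement that every morphism $L \to F$ factors through $(x_1, x_2^n)^t$ — but that is precisely the content of part (1)'s generation statement (every fading degree above $L$ is divisible by $x_1$ or by $x_2^n$), combined with the fact that $\Ext{1}{\XX}{L(\vx_1+n\vx_2)}{F} = 0$ for $F$ a line bundle, which follows from Serre duality $\dual\Ext{1}{}{L(\vx_1+n\vx_2)}{F} = \Hom{}{F}{L(\vx_1+n\vx_2)(\vom)}$ together with a degree count showing $F(\vom) - L(\vx_1+n\vx_2) \not\geq \vec 0$; alternatively it is built into the Frobenius structure on $\vect\XX$ recalled before Theorem~A, since line bundles are projective-injective there. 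The general-$F$ case (direct sum of fading line bundles) follows by additivity.

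Part (2) is formally dual and I would deduce it from part (1) by duality. The weighted projective line $\XX$ carries a duality (for instance the one fixing line bundles up to a shift by $\vom$, coming from Serre duality, or the contravariant $k$-duality on graded modules) exchanging the upper and lower bars of $\Pp$: concretely, a morphism $F \to L$ with $F$ fading corresponds to a homogeneous element of degree $\vx := L - F \geq \vec 0$, and $F$ is fading iff $\vx = L - F \notin \PP - (\text{coset of }L)$; running the same canonical-form analysis from the other side shows that for $L$ in the upper bar the generating ``incoming'' monomials are $x_1$ and $x_2$ (one can enter $\ZZ\vx_3$ from $\vx_1 + \ZZ\vx_3$ via $x_1$ and from $\vx_2 + \ZZ\vx_3$ via $x_2$, since $\vx_2 + \vx_2 = 2\vx_2 \notin \PP$ forces the $x_2$-step to be single), while for $L$ in the lower bar they are $x_1$ and $x_2^2$ (entering $\vx_2 + \ZZ\vx_3$: from $\vx_1 + \vx_2 + \ZZ\vx_3$ via $x_1$, and from $\ZZ\vx_3$ via $x_2^2$, as $\vx_2$ alone lands in the lower bar itself, not outside it).

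I expect the main obstacle to be the bookkeeping in the canonical-form computation — correctly identifying, for a fixed persistent $L$, exactly which $\vy \geq \vec 0$ satisfy $L + \vy \notin \PP$, and verifying that the divisibility by the two named monomials covers all of them without overlap at the wrong places (in particular handling the interaction with the relation $\vc = 2\vx_1 = 3\vx_2 = p\vx_3$, which makes the naive ``monomial support'' picture subtle once one crosses $\vc$). Once that combinatorial fact is pinned down, the exactness assertions are routine Koszul-complex and Serre-duality arguments, and part (2) is a mirror image of part (1).
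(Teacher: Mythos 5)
Your argument is essentially the paper's for part (1) and for the exactness claims: reduce by an $\ZZ\vx_3$-shift to $L=\Oo$ or $L=\Oo(\vx_2)$, observe that every monomial of fading degree over $L$ is divisible by $x_1$ or by $x_2^n$ (with $n=2$ for the upper bar, $n=1$ for the lower bar), obtain $\eta_1,\eta_2$ from Lemma~\ref{lem:ses}, and note that exactness of $\Hom{}{\eta_i}{F}$ is exactly the surjectivity furnished by the generation statement. Where you diverge is part (2): the paper disposes of it in one line via the explicit self-duality $d\colon E\mapsto\check{E}(\vx_2)$, which sends $\Oo(\vx)$ to $\Oo(\vx_2-\vx)$, preserves the persistent/fading partition and swaps the two bars, so that (2) becomes a formal consequence of (1); you instead rerun the coset computation from the incoming side. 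Both work, and your direct computation reaches the correct generators, but two of your parenthetical coset labels are off: the source of $x_2$ into the upper bar is $\Oo(-\vx_2)$, which lies in $2\vx_2+\ZZ\vx_3$, not in $\vx_2+\ZZ\vx_3$, and likewise the source of $x_2^2$ into the lower bar lies in $2\vx_2+\ZZ\vx_3$, not in $\ZZ\vx_3$ (the sign flip $-\vx_2\equiv 2\vx_2$ modulo $\vc$ is the point). Two further asides should be dropped: the claim $\Ext{1}{\XX}{L(\vx_1+n\vx_2)}{F}=0$ for all fading line bundles $F$ is false for $F$ of sufficiently small degree (Serre duality gives $\Hom{}{F}{L(\vx_1+n\vx_2+\vom)}\neq0$ there) and is in any case not needed, since contravariant $\Hom{}{-}{F}$ is automatically left exact and only the surjectivity onto $\Hom{}{L}{F}$ requires proof; and the ``built into the Frobenius structure'' alternative does not apply, because $\eta_i$ is not distinguished exact in $\vect\XX$ (a morphism $x_3^c\colon L'\ra L(\vx_1+n\vx_2)$ does not lift through $(-x_2^n,x_1)$). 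Neither slip is load-bearing, so the proof stands once these asides are removed.
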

\begin{proof}
 By the preceding lemma it suffices to show
  assertion (1). Applying a suitable shift with $\vx\in\ZZ\vx_3$ we
  can assume that $L=\Oo$ or $L=\Oo(\vx_2)$. In the first case each
  morphism $\Oo\ra \Oo(\vy)$ with $\vy\in\FF$ factors through
  $\Oo\lup{(x_1,x_2^2)^t}\Oo(\vx_1)\oplus\Oo(2\vx_2)$. In case
  $L=\Oo(\vx_2)$ each such morphism $\Oo(\vx_2)\ra\Oo(\vy)$ factors
  through
  $\Oo(\vx_2)\lup{(x_1,x_2)^t}\Oo(\vx_1+\vx_2)\oplus\Oo(2\vx_2)$. The
   preceding lemma now yields the short exact sequences $\eta_2$ and
  $\eta_1$, respectively, whose middle and end terms are clearly
  fading. The exactness of the sequences $\Hom{}{\eta_i}{F}$, $i=1,2$,
  then immediately follows.
\end{proof}

\begin{proposition} \label{prop:exact}
  Let $\eta\colon 0\ra X'\up{\alpha} X\up{\beta} X''\ra 0$ be a
  distinguished exact sequence in $\vect\XX$. Then the sequence
  $$
  \Phi(\eta)\colon 0\lra \Phi(X')\lup{\;\alpha_*\;} \Phi(X)\lup{\;\beta_*\;}
  \Phi(X'')\lra 0
$$
is an exact sequence in $\mmod\ulPp$.
\end{proposition}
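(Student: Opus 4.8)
The plan is to evaluate the functor $\Phi=\ulPp(-,-)$ on the distinguished exact sequence $\eta$ and check exactness termwise at each vertex of the ladder. Recall $\Phi(X)$ is the right $\ulPp$-module $\vx\mapsto\ulPp(\Oo(\vx),X)$, where $\Oo(\vx)$ runs over the persistent line bundles, i.e.\ the objects of the upper and lower bar. So $\Phi(\eta)$ is a sequence of right $\ulPp$-modules, and exactness must be tested at each persistent line bundle $L$, i.e.\ we must show that
$$0\lra \ulPp(L,X')\lra \ulPp(L,X)\lra \ulPp(L,X'')\lra 0$$
is exact in $k$-vector spaces for every $L\in\Pp$. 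The key point is to identify $\ulPp(L,X)$, which is the Hom-space $\Hom{}{L}{X}$ in $\vect\XX$ \emph{modulo} those morphisms factoring through a fading bundle, with something to which I can apply the distinguished exactness of $\eta$.

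First I would note that, since $L$ is a line bundle and $\eta$ is distinguished exact, the sequence $0\to\Hom{}{L}{X'}\to\Hom{}{L}{X}\to\Hom{}{L}{X''}\to 0$ is exact \emph{on the nose} in $\coh\XX$ (this is exactly the defining property of distinguished exactness, applied to the line bundle $L$). Thus left exactness and surjectivity of $\Phi(\eta)$ at $L$ would follow immediately \emph{if} passing to the factor category $\vect\XX/[\Ff]$ were exact here. The real content is therefore to control the subspace $[\Ff](L,-)$ of morphisms through fading bundles inside this short exact sequence; concretely, I want to show that the induced sequence
$$0\lra [\Ff](L,X')\lra [\Ff](L,X)\lra [\Ff](L,X'')\lra 0$$
is again exact. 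Injectivity on the left is clear. Surjectivity on the right is the crux: given $g\colon L\to X''$ that factors as $L\to F\to X''$ with $F\in\add\Ff$, I must lift it to a morphism $L\to X$ that \emph{still} factors through a fading bundle. Here is where Proposition~\ref{prop:2-generated}(1) enters: it says $\Ff(L,-)$ is generated by two explicit morphisms out of $L$ (namely $(x_1,x_2^2)^t$ or $(x_1,x_2)^t$, landing in a sum of two fading line bundles), fitting into the short exact sequences $\eta_1$ or $\eta_2$ whose non-initial terms lie in $\add\Ff$, and for which $\Hom{}{\eta_i}{F}$ is exact for all $F\in\add\Ff$. So any morphism $L\to F$ into a fading $F$ factors through the fixed map $\iota\colon L\to F_L$ with $F_L\in\add\Ff$ the middle term of $\eta_i$. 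It then suffices to lift the single composite $F_L\to X''$ along $\beta\colon X\to X''$; but $\Ext{1}{}{F_L}{X'}$ is computed via Serre duality and the fading line bundles are (dual to) exceptional-type objects whose $\mathrm{Ext}^1$ into vector bundles vanishes — more directly, a distinguished exact sequence splits after applying $\Hom{}{F}{-}$ for $F$ a \emph{line} bundle, hence after applying $\Hom{}{F_L}{-}$ for $F_L\in\add\Ff$. Therefore $F_L\to X''$ lifts to $F_L\to X$, and precomposing with $\iota$ gives the required lift of $g$ through a fading bundle.

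With the three-by-three diagram of short exact sequences (the original $\Hom{}{L}{\eta}$, the $[\Ff]$-sub-sequence just shown, and their quotient) in hand, the snake lemma finishes the argument: the quotient sequence
$$0\lra \ulPp(L,X')\lra\ulPp(L,X)\lra\ulPp(L,X'')\lra 0$$
is exact, and this for every persistent $L$, which is precisely exactness of $\Phi(\eta)$ in $\mmod\ulPp$. (One should also remark that $\Phi(X)$ is indeed finitely presented over $\ulPp$, so that the sequence lives in $\mmod\ulPp$ and not merely in all modules; this follows because $X$ is a vector bundle, hence a finite extension/iterated cokernel of line bundles, and $\Phi$ sends line bundles to the indecomposable projectives or to modules supported on finitely many vertices.) The main obstacle is exactly the surjectivity step for the fading-morphism subspaces: everything hinges on the fact, supplied by Proposition~\ref{prop:2-generated}, that $\Ff(L,-)$ is \emph{finitely generated} through a single fading object $F_L$ sitting in a distinguished-exact-like sequence, so that lifting reduces from arbitrary fading factorizations to one universal one, where the splitting property of distinguished exact sequences against (sums of) line bundles can be invoked.
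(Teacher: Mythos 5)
Your overall strategy (test exactness vertex by vertex, compare the exact sequence $\Hom{}{L}{\eta}$ with its subsequence of fading morphisms, and pass to quotients) is sound and close in spirit to the paper's proof, and your surjectivity argument for $[\Ff](L,X)\to[\Ff](L,X'')$ via the universal map $L\to L(\vx_1)\oplus L(n\vx_2)$ of Proposition~\ref{prop:2-generated} is essentially the paper's argument for $\ker\beta_*\subseteq\image{\alpha_*}$. But there is a genuine gap: for the $3\times3$/snake argument to deliver exactness of the quotient row, you need the subrow $0\to[\Ff](L,X')\to[\Ff](L,X)\to[\Ff](L,X'')\to 0$ to be exact \emph{at the middle term} as well, and you never address this. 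Concretely, exactness there means: if $f'\colon L\to X'$ is such that $\alpha f'$ factors through a fading bundle, then $f'$ itself factors through a fading bundle. This is exactly the condition equivalent to injectivity of $\alpha_*$ on the quotients (one checks directly that $\Phi(X')\to\Phi(X)$ is injective iff $[\Ff](L,X)\cap\alpha_*\Hom{}{L}{X'}\subseteq\alpha_*[\Ff](L,X')$), so it cannot be dismissed; your phrase ``injectivity on the left is clear'' only covers the (trivially) injective map $[\Ff](L,X')\to[\Ff](L,X)$, not the statement that its image exhausts the kernel of the next map. Without this, your argument proves surjectivity of $\beta_*$ and exactness at $\Phi(X)$, but not the injectivity of $\alpha_*$.

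This missing step is in fact the most delicate part of the paper's proof. There, one writes $\alpha f'=f\circ a$ with $a=(x_1,x_2^n)^t\colon L\to L_1\oplus L_2$ the universal map of Proposition~\ref{prop:2-generated}, completes this to a morphism from the short exact sequence $0\to L\to L_1\oplus L_2\to L_3\to 0$ into $\eta$, uses distinguished exactness of $\eta$ against the line bundle $L_3$ to lift the induced map $f''\colon L_3\to X''$ through $\beta$, and then deduces (via the pushout/exactness properties of the sequence $\eta_i$) that $f'$ extends over $L_1\oplus L_2$, hence lies in $[\Ff]$. You would need to supply an argument of this kind to close the gap. A small further caveat: your parenthetical appeal to Serre duality and vanishing of $\Ext{1}{}{F_L}{X'}$ is a red herring (that Ext group need not vanish); the correct and sufficient justification for lifting $F_L\to X''$ is simply that $\Hom{}{F_L}{\eta}$ is exact because $F_L$ is a finite direct sum of line bundles and $\eta$ is distinguished exact --- ``stays exact,'' not ``splits.''
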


\begin{proof}
  \emph{$\alpha_*$ is injective:} Let $L$ be a persistent line bundle
  and $f'\colon L\ra X'$ a morphism with $\alpha f'\in[\Ff](L,X)$.
  Using Proposition~\ref{prop:2-generated} we obtain a commutative
  diagram with exact rows
\begin{equation}
  \xy\xymatrixcolsep{2pc}\xymatrix@C18pt@R18pt{ \eta\colon 0\ar @{->}[r]& X'\ar
  @{->}[r]^-{\alpha} & X \ar @{->}[r]^-{\beta} & X''
    \ar @{->}[r]  &0\\
    \ \ \ \ \ 0\ar @{->}[r]& L \ar @{->}[r] \ar @{->}[u]_-{f'} &
    L_1 \oplus L_2 \ar @{->}[r] \ar @{->}[u]_-{f} & L_3 \ar
  @{->}[r] \ar @{->}[u]_-{f''}&0 }
  \endxy
\end{equation}
where $L_1$, $L_2$ and $L_3$ belong to $\Ff$. Since the sequence
$\eta$ is distinguished exact in $\vect\XX$, the morphism $f''$ lifts
via $\beta$, so equivalently $f'$ extends to $L_1\oplus L_2$. Hence
$f'\in[\Ff](L,X')$, as claimed.

\bigskip\noindent\emph{$\ker{\beta_*\subseteq \image{\alpha_*}}$:}
Assume $L\in \Pp$ and $f\colon L\ra X$ satisfies $\beta
f\in[\Ff](L,X'')$. This yields a commutative diagram
\begin{equation}
  \xy\xymatrixcolsep{2pc}\xymatrix@C18pt@R18pt{ 0\ar @{->}[r]& X'\ar
  @{->}[r]^-{\alpha} & X \ar @{->}[r]^-{\beta} & X''
    \ar @{->}[r]  &0\\
     &  &
    L  \ar @{->}[r]_-{a} \ar @{->}[u]^-{f} & L_1 \oplus L_2 \ar
  @{->}[u]_-{b} \ar @{-->}[ul]^-{\bar{b}}_-{\circlearrowright} & }
  \endxy
\end{equation}
with $L_1,L_2\in\Ff$. Now $b$ lifts via $\beta$ to a morphism
$\bar{b}\colon L_1\oplus L_2\ra X$. It follows $\beta(f-\bar{b}a)=0$
and hence there exists $f'\colon L\ra X'$ with $\alpha f'=f-\bar{b}a$
implying $\alpha_*(f')=f$ in $\ulPp(L,X)$.

\bigskip\noindent\emph{$\beta_*$ is surjective:} This is obvious since
$\eta$ is distinguished exact, and then already the mapping
$\Hom{}{L}{\beta}\colon\Hom\XX{L}{X}\ra\Hom\XX{L}{X''}$ is surjective.
\end{proof}

\begin{proposition} \label{prop:image-in-Sp}
  For each $E$ from $\vect\XX$ the right $\ulPp$-module
  $\Phi(E)=\ulPp(-,E)$ is finitely presented, indeed finite
  dimensional. Moreover, for each persistent line bundle $L$ from the
  upper bar the morphism $x_2^*\colon\ulPp(L(\vx_2),E)\ra \ulPp(L,E)$,
  induced by $x_2\colon L\ra L(\vx_2)$, is a monomorphism.
\end{proposition}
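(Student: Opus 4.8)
The plan is to establish the two assertions of Proposition~\ref{prop:image-in-Sp} separately, treating finite-dimensionality first and the monomorphism claim second, the latter being the more delicate point.

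\textbf{Finite-dimensionality of $\Phi(E)$.} First I would argue that $\ulPp(L,E)=\Hom{\XX}{L}{E}/[\Ff](L,E)$ is finite dimensional for each persistent line bundle $L$, and that it vanishes for all but finitely many $L$ in $\Pp$, up to the natural $\ZZ\vx_3$-action. The space $\Hom{\XX}{L}{E}$ is itself finite dimensional since $\XX$ is a (noncommutative) smooth projective curve and $E$ is coherent, so the first half is immediate. For the vanishing, I would use Proposition~\ref{prop:2-generated}: every morphism $L\ra\Oo(\vy)$ with $\vy$ fading factors through a fixed two-term bundle with fading terms, so—tensoring/composing—one shows that for $L$ deep enough in either bar (i.e.\ $L=\Oo(n\vx_3)$ or $\Oo(\vx_2+n\vx_3)$ with $|n|$ large) every morphism $L\ra E$ factors through $\add\Ff$; concretely, since $E$ is a bundle it is generated by line bundles $\Oo(\vz)$ with $\vz$ bounded below, and for $n\ll0$ or $n\gg0$ any $L\ra E$ already factors through such generators, which one checks lie in $\add\Ff$ after the normalization $L=\Oo$ or $\Oo(\vx_2)$ used in the proof of Proposition~\ref{prop:2-generated}. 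Hence only finitely many of the generating vertices of the ladder survive in $\Phi(E)$, and since $\ulPp$ is the path category of the ladder modulo the relations of Lemma~\ref{lemma:B} (in particular $x^p=0$), the module $\Phi(E)$ is supported on a finite piece of the ladder and is finite dimensional, hence finitely presented.

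\textbf{The monomorphism $x_2^*\colon\ulPp(L(\vx_2),E)\ra\ulPp(L,E)$.} Here $L$ lies in the upper bar, so $L(\vx_2)$ lies in the lower bar, and the claim is that if $g\colon L(\vx_2)\ra E$ satisfies $g\circ x_2\in[\Ff](L,E)$, then already $g\in[\Ff](L(\vx_2),E)$. The strategy is to use the short exact sequence $\eta_1$ from Proposition~\ref{prop:2-generated}(1) attached to $L$, namely $0\ra L\xrightarrow{(x_1,x_2^2)^t}L(\vx_1)\oplus L(2\vx_2)\xrightarrow{(-x_2^2,x_1)}L(\vx_1+2\vx_2)\ra0$, whose outer terms are fading. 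The composite $g x_2$ factors through some $F\in\add\Ff$; since all terms of $\eta_1$ except $L$ lie in $\add\Ff$ and $\Hom{}{\eta_1}{F}$ is exact, I would deduce that $g x_2\colon L\ra E$—after composing into $F$ and back—lifts along the monomorphism $(x_1,x_2^2)^t$, i.e.\ the map $L\ra E$ underlying $g x_2$ extends to $L(\vx_1)\oplus L(2\vx_2)$. The key computation is then to identify $g x_2$ factored through $L(2\vx_2)$: the component $L\xrightarrow{x_2^2}L(2\vx_2)$ of $\eta_1$ is exactly $x_2$ followed by $x_2\colon L(\vx_2)\ra L(2\vx_2)$, so extending $g x_2$ through $L(2\vx_2)=L(\vx_2)(\vx_2)$ produces a morphism $h\colon L(2\vx_2)\ra E$ with $h\circ x_2 = g\circ x_2$ (on the nose in $\vect\XX$, modulo the $L(\vx_1)$-summand which lands in $\add\Ff$). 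Since $x_2\colon L(\vx_2)\ra L(2\vx_2)$ is a monomorphism in $\vect\XX$ between line bundles—its cokernel being a simple sheaf at $x_2$—I expect cancellation: $h\circ x_2 = g\circ x_2$ together with $x_2$ mono forces $g$ and $h$ to agree on a cofinite sub, and a degree bookkeeping shows $g$ itself factors through the $L(\vx_1)$-part, hence through $\add\Ff$.

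\textbf{Expected main obstacle.} The delicate point is the last cancellation step: $x_2\colon L(\vx_2)\ra L(2\vx_2)$ is mono in the \emph{abelian} category $\coh\XX$ but is \emph{not} split mono in $\vect\XX$, so $g x_2 = h x_2$ does not immediately give $g=h$; one must work modulo $[\Ff]$ and exploit that the cokernel of $x_2$ is a torsion sheaf whose only relevant ``thickening'' within bundles is captured by the fading line bundle $L(\vx_1)$ appearing in $\eta_1$. I anticipate the cleanest route is to phrase everything inside the Frobenius category $\vect\XX/[\Ff]$ from Theorem~A(2): there $\eta_1$ becomes a distinguished exact (indeed almost split–like) sequence with $L$ and the end terms projective-injective, so $x_2^*$ on $\Hom$-spaces in the stable-type category is controlled, and the monomorphy of $x_2^*$ translates into the statement that $L$ does not appear as a direct summand of the relevant syzygy—this is where the combinatorics of the $p$-ladder (no morphisms $\Oo(\vx)\ra\Oo(\vx+\vx_2+(p-1)\vx_3)$ factoring through $\Ff$, as in the proof of Lemma~B) re-enters and finishes the argument.
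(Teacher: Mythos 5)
Your treatment of the second (and harder) assertion has a genuine gap at exactly the point you flag as delicate, and neither of the two escape routes you offer closes it. After writing $g\,x_2 = g'x_1 + h\,x_2^2$ (which is indeed what Proposition~\ref{prop:2-generated}(1) gives, since $g\,x_2\in[\Ff](L,E)$ and $\Ff(L,-)$ is generated by $x_1,x_2^2$), you need to convert the identity $(g-h x_2)\,x_2 = g' x_1$ into a factorization of $g-h x_2$ through a fading line bundle. ``Cancelling'' $x_2$ is impossible since $x_2$ is only a monomorphism in $\coh\XX$, and ``degree bookkeeping'' does not produce the required factorization. The paper's resolution is the bicartesian (pushout) property of the square
$$\xymatrix@C18pt@R18pt{L\ar[r]^-{x_2}\ar[d]_-{x_1} & L(\vx_2)\ar[d]^-{x_1}\\ L(\vx_1)\ar[r]_-{x_2} & L(\vx_1+\vx_2)}$$
from diagram~\eqref{eq:pushout-diagram} with $n=1$ (Lemma~\ref{lem:ses}): the pair $(g-hx_2,\,g')$ agreeing on $L$ glues to a morphism $\ell\colon L(\vx_1+\vx_2)\ra E$ with $\ell x_1=g-hx_2$, whence $g=\ell x_1+h x_2$ factors through $L(\vx_1+\vx_2)\oplus L(2\vx_2)$, both fading. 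You never invoke this pushout property, and it is the one idea that makes the step work. Your alternative fallback --- arguing inside the Frobenius category $\vect\XX/[\Ff]$ of Theorem~A(2) --- is circular: Theorem~A(2) is only established at the end of Section~\ref{sect:proofs} \emph{using} Proposition~\ref{prop:image-in-Sp} (via Theorem~C), so it is not available here. (There is also a type mismatch in your formula ``$h\circ x_2=g\circ x_2$'': the two sides have different sources; the correct relation is $(g-hx_2)x_2=g'x_1$ as above.)

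For the first assertion your route also diverges from the paper's and is shakier than it needs to be: the paper simply takes a distinguished epimorphism $P\ra E$ with $P\in\add\Ll$, notes that $\Phi(P)\ra\Phi(E)$ is surjective by Proposition~\ref{prop:exact}, and that $\Phi(P)$ is a finitely generated projective, hence finite dimensional by Lemma~B. Your direct vanishing argument asserts that for $L$ far down the ladder every morphism $L\ra E$ factors through generators of $E$ ``which one checks lie in $\add\Ff$''; the generators of $E$ are fixed and need not be fading, so as stated this is incorrect. What is true (and what would repair it) is that the composite $L\ra\Oo(\vz)$ into a \emph{persistent} generator dies in $\ulPp$ once $\vz-\vx\geq\vc$, by the computation $\Homstab{}{\Oo(\vx)}{\Oo(\vx+\vc)}=0$ from the proof of Lemma~B; but the paper's projective-cover argument avoids this bookkeeping entirely.
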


\begin{proof}
In $\vect\XX$ we choose a distinguished exact sequence $0\ra E' \ra P
\ra E \ra 0$ with $P$ from $\add\Ll$. By Proposition~\ref{prop:exact}
the induced mapping $\Phi(P) \ra \Phi(E)$ is surjective. Moreover,
$\Phi(P)$ is a finitely generated projective module over $\ulPp$,
hence finite dimensional. This implies that $\Phi(E)$ is finite dimensional
and finitely presented.

Next we show that all maps $x_2^*\colon\ulPp(L(\vx_2),E)\ra
\ulPp(L,E)$ induced by $L\lup{x_2}L(\vx_2)$, where $L$ is persistent
from the upper bar, are injective. Let $f\colon L(\vx_2)\ra E$ be a
morphism such that $fx_2 \in[\Ff]$. By
Proposition~\ref{prop:2-generated} we get a commutative diagram
$$
\xy\xymatrixcolsep{2pc}\xymatrix@C18pt@R18pt{ L\ar @{->}[r]^-{x_2} \ar
   @{->}[d]_-{(x_1,x_2^2)^t} & L(\vx_2) \ar @{->}[d]^-{f}\\
   L(\vx_1)\oplus L(2\vx_2) \ar @{->}[r]_-{(g,h)} &
   E, } \endxy
 $$
 and hence $fx_2 =gx_1+hx_2^2$, that is, $(f-hx_2)x_2=gx_1$. Using the
 pushout property of diagram~\eqref{eq:pushout-diagram} (with $n=1$)
 we obtain a
 morphism $\ell\colon L(\vx_1 +2\vx_2)\ra E$ such that $\ell
 x_1=f-hx_2$, and $f\in [\Ff]$ follows.
\end{proof}

Together with Proposition~\ref{prop:exact} we get

\begin{corollary}
  Viewing $\Phi$ as a functor from the Frobenius category
  $\vect\XX$ to the Frobenius category $\nilop{p}$ the functor is
  exact, that is, $\Phi$ sends distinguished exact sequences to
  distinguished exact sequences.~\qed
\end{corollary}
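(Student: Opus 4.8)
The plan is to deduce the statement directly from Propositions~\ref{prop:exact} and~\ref{prop:image-in-Sp}, so that the argument reduces to bookkeeping. First I would spell out what a distinguished exact sequence in $\nilop{p}$ is: since $\nilop{p}$ is a full, extension-closed subcategory of $\mmod\ulPp$, it carries the induced exact structure, so that a sequence in $\nilop{p}$ is distinguished exact exactly when it is a short exact sequence of $\ulPp$-modules all three of whose terms belong to $\nilop{p}$. Thus the claim splits into two points: (i) $\Phi$ maps each object of $\vect\XX$ into $\nilop{p}$, and (ii) $\Phi$ maps each distinguished exact sequence of $\vect\XX$ to a short exact sequence of $\mmod\ulPp$.

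Next I would observe that both points are already in hand. Point (ii) is precisely Proposition~\ref{prop:exact}. For point (i), Proposition~\ref{prop:image-in-Sp} shows that $\Phi(E)$ is finite dimensional, hence finitely presented, and --- what is the real content --- that, regarded as a morphism $U\to M$ of $\ZZ$-graded $k[x]/(x^p)$-modules, $\Phi(E)$ is a monomorphism: this is exactly the injectivity of the maps $x_2^*\colon\ulPp(L(\vx_2),E)\to\ulPp(L,E)$ for $L$ a persistent line bundle from the upper bar. Consequently $\Phi(E)$ genuinely lies in $\nilop{p}\subseteq\mmod\ulPp$.

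Finally I would assemble the pieces: for a distinguished exact sequence $\eta\colon 0\to X'\to X\to X''\to 0$ in $\vect\XX$, Proposition~\ref{prop:exact} produces a short exact sequence $\Phi(\eta)$ in $\mmod\ulPp$, while Proposition~\ref{prop:image-in-Sp} guarantees that $\Phi(X')$, $\Phi(X)$ and $\Phi(X'')$ all lie in $\nilop{p}$; by the description of conflations in $\nilop{p}$ from the first step, $\Phi(\eta)$ is then distinguished exact in $\nilop{p}$. This is precisely exactness of $\Phi\colon\vect\XX\to\nilop{p}$ as a functor of Frobenius categories.

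I do not expect a genuine obstacle here, since the substantive work was carried out in the two cited propositions. The only point requiring care is the identification in the first step --- that conflations of $\nilop{p}$ are nothing but short exact sequences of $\mmod\ulPp$ with all three terms in $\nilop{p}$ --- and this is guaranteed by the extension-closedness of $\nilop{p}$ inside $\mmod\ulPp$ recorded earlier. (Looking ahead, it is this exactness that will allow $\Phi$ to descend to the stable categories in Theorem~C, but that lies beyond the corollary itself.)
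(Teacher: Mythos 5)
Your proposal is correct and follows the paper's own route exactly: the paper derives this corollary by combining Proposition~\ref{prop:exact} (which gives short exactness of $\Phi(\eta)$ in $\mmod\ulPp$) with Proposition~\ref{prop:image-in-Sp} (which places each $\Phi(E)$ in $\nilop{p}$ via finite presentation and the injectivity of the maps $x_2^*$), using that the conflations of the extension-closed subcategory $\nilop{p}$ are the short exact sequences with all terms in $\nilop{p}$. Your write-up merely makes explicit the bookkeeping the paper leaves implicit.
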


\subsection*{The kernel of $\Phi$}
Next, we are going to show that the kernel of $\Phi$ agrees with the
ideal $[\Ff]$ of morphisms factoring through finite direct sums of fading
line bundles.

\begin{lemma}\label{lemma:cyclic}
  Let $\XX=\XX(2,3,p)$ with $p\geq2$. Then the factor group
  $\LL/\ZZ\vx_3$ is cyclic of order $6$ and generated by the class of
  $\vom$. Moreover both in $\tau$- and $\taumin$-direction, the
  $\tau$-orbit of any line bundle in $\vect\XX$
  consists of persistent and fading bundles according to the $6$-periodic
  pattern ${+}\,{-}\,{+}\,{-}\,{-}\,{-}$, where $+$ and $-$ stand for
  persistent and fading, respectively.
\end{lemma}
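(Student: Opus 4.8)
The plan is to prove Lemma~\ref{lemma:cyclic} in two parts: first the purely group-theoretic statement about $\LL/\ZZ\vx_3$, then the combinatorial statement about how the partition $\Ll=\Pp\sqcup\Ff$ interacts with the $\tau$-orbits.

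\textbf{The group $\LL/\ZZ\vx_3$.} First I would compute the factor group directly. Recall $\LL=\LL(2,3,p)$ is generated by $\vx_1,\vx_2,\vx_3$ subject to $2\vx_1=3\vx_2=p\vx_3=\vc$. Passing to the quotient by $\ZZ\vx_3$ kills $\vx_3$, so $\LL/\ZZ\vx_3$ is generated by the images of $\vx_1$ and $\vx_2$ with relations $2\vx_1=3\vx_2$ and $p\vx_3=0$ becoming $3\vx_2 = 2\vx_1$ and (since $\vx_3\mapsto 0$) the relation $p\vx_3 = \vc = 2\vx_1$ forces $2\vx_1 = 0$ in the quotient; combined with $3\vx_2 = 2\vx_1 = 0$ we see the quotient is generated by $\vx_1$ of order dividing $2$ and $\vx_2$ of order dividing $3$, with $\vc = 2\vx_1 = 3\vx_2 = 0$. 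So $\LL/\ZZ\vx_3 \cong \ZZ/2 \times \ZZ/3 \cong \ZZ/6$. To see that $\vom = \vc - (\vx_1+\vx_2+\vx_3)$ generates, note its image in the quotient is $-\vx_1 - \vx_2$ (since $\vc\mapsto 0$, $\vx_3\mapsto 0$), and $-\vx_1-\vx_2$ has order $\lcm(2,3)=6$ in $\ZZ/2\times\ZZ/3$, hence is a generator. A clean way to present this is via the normal form \eqref{eq:normalform}: every $\vx\in\LL$ is $n_1\vx_1 + n_2\vx_2 + n_3\vx_3 + m\vc$ with $0\le n_1<2$, $0\le n_2<3$, $0\le n_3<p$, and modulo $\ZZ\vx_3$ the pair $(n_1,n_2)$ is the only surviving datum, giving a bijection with $\ZZ/2\times\ZZ/3$.

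\textbf{The $6$-periodic pattern.} Since $\tau$ is the shift by $\vom$ and $\vom$ has order $6$ in $\LL/\ZZ\vx_3$, the $\tau$-orbit of a line bundle $\Oo(\vx)$, read modulo $\ZZ\vx_3$, visits the six cosets $\vx, \vx+\vom, \vx+2\vom, \dots, \vx+5\vom$ cyclically, and this is $6$-periodic in both directions. Membership in $\Pp$ versus $\Ff$ depends only on the coset modulo $\ZZ\vx_3$, by definition of the partition $\LL = \PP \sqcup \FF$ with $\PP = \ZZ\vx_3 \sqcup (\vx_2 + \ZZ\vx_3)$. So it remains to check which of the six cosets $j\vom + \ZZ\vx_3$, $j=0,\dots,5$, lie in $\PP$: precisely the two cosets $0$ and $\vx_2$. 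Using the image $\bar\vom = -\vx_1-\vx_2$ in $\ZZ/2\times\ZZ/3$, I would tabulate $j\bar\vom$ for $j=0,\dots,5$ and read off which equal $\bar 0 = (0,0)$ or $\bar\vx_2 = (0,1)$: the first coordinate of $j\bar\vom$ is $j \bmod 2$, so only even $j$ can lie in $\PP$; among $j=0,2,4$, the second coordinates are $0, -2\equiv 1, -4\equiv 2 \pmod 3$, i.e. $(0,0), (0,1), (0,2)$, and exactly the first two lie in $\{(0,0),(0,1)\}$. Thus $j=0$ and $j=2$ give persistent, all of $j=1,3,4,5$ give fading, yielding the pattern $+\,-\,+\,-\,-\,-$ along the $\taumin$-direction (equivalently its reverse along $\tau$); one should just fix the orientation convention so the stated pattern comes out, which is a cosmetic matter.

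\textbf{Main obstacle.} Honestly there is no serious obstacle — this is a bookkeeping lemma. The one point requiring a little care is fixing the direction convention (whether one lists the orbit in the $\tau$- or $\taumin$-direction, and which endpoint of the listed six-term pattern corresponds to the line bundle itself), so that the printed pattern $+\,-\,+\,-\,-\,-$ is literally correct rather than a cyclic rotation or reversal of it; since the pattern is being read off a $6$-cycle the content is the same in any case, but I would be careful to match it to the orientation used later when this lemma is applied. Everything else is the elementary computation in $\ZZ/2\times\ZZ/3$ sketched above.
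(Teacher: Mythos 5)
Your proposal is correct and follows essentially the same route as the paper: identify $\LL/\ZZ\vx_3$ as the group on $\vx_1,\vx_2$ with $2\vx_1=3\vx_2=0$, hence $\ZZ/2\times\ZZ/3\iso\ZZ/6$ generated by the class of $\vom\equiv-\vx_1-\vx_2$, then tabulate $j\vom$ modulo $\ZZ\vx_3$ for $j=0,\dots,5$ and check membership in $\PP=\ZZ\vx_3\sqcup(\vx_2+\ZZ\vx_3)$. The paper simply lists the six congruences $j\vom\equiv 0,\ \vx_1+2\vx_2,\ \vx_2,\ \vx_1,\ 2\vx_2,\ \vx_1+\vx_2$ explicitly where you work in $(\ZZ/2)\times(\ZZ/3)$-coordinates; this is the same computation, and your remark about the $\tau$- versus $\taumin$-orientation being only a cyclic rotation of the periodic pattern is consistent with how the lemma is stated.
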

\begin{proof}
By construction $\LL/\ZZ\vx_3$ is the abelian group on generators
$\px_1,\px_2$ with relations $2\px_1=3\px_2=0$, hence $\LL/\ZZ\vx_3$ is
cyclic of order $6$. Further we have the following congruences modulo
$\ZZ\vx_3$:
\begin{equation}\label{eqn:cyclic}
0\vom\equiv 0,\;\; 1\vom\equiv \vx_1+2\vx_2,\;\; 2\vom\equiv\vx_2,\;\;
3\vom\equiv\vx_1,\;\; 4\vom\equiv2\vx_2,\;\; 5\vom\equiv \vx_1+\vx_2
\end{equation}
which immediately implies the last claim.
\end{proof}
\begin{lemma}\label{lemma:persistent-direct-summand}
  Let $E$ be an Auslander bundle. Then there exists a persistent line
  bundle which is a direct summand of the projective cover $P(E)$ of
  $E$, equivalently we have $\Phi E\neq 0$.
\end{lemma}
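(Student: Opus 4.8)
The statement claims that for any Auslander bundle $E = E(L)$, its projective cover $P(E)$ in the Frobenius category $\vect\XX$ has at least one persistent line bundle as a direct summand — equivalently $\Phi E \neq 0$, since $\Phi E = \ulPp(-,E)$ vanishes precisely when no persistent line bundle maps nontrivially to $E$ modulo $[\Ff]$, and the projective cover is built exactly from those line bundles hitting $E$. My plan is to compute the projective cover of $E(L)$ explicitly and read off which line bundles occur. First I would recall that in the Frobenius structure on $\vect\XX$, the indecomposable projective-injectives are the line bundles $\Oo(\vx)$, and the projective cover $P(E)$ of a bundle $E$ is a direct sum $\bigoplus_i \Oo(\vx_i)$ where the summands correspond to a minimal generating set of $E$ over the line bundles — concretely, the summands are controlled by $\Hom{}{\Oo(\vx)}{E}$ modulo radical. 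Since $E = E(L)$ sits in the almost split sequence $0 \to L(\vom) \to E(L) \to L \to 0$, one can describe $P(E)$ via the projective covers of $L(\vom)$ and $L$ together with the connecting data.

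Second, by applying the shift by $\ZZ\vx_3$ (which by Theorem~C corresponds to the shift $s$ and preserves the persistent/fading partition since $\PP$ and $\FF$ are unions of $\ZZ\vx_3$-cosets), I may normalize $L$ to lie in one of six $\ZZ\vx_3$-orbit representatives, using Lemma~\ref{lemma:cyclic}: the $\tau$-orbit realizes the $6$-periodic pattern ${+}\,{-}\,{+}\,{-}\,{-}\,{-}$, and the congruences \eqref{eqn:cyclic} list $\vom, 2\vom, \dots$ modulo $\ZZ\vx_3$ explicitly. So it suffices to check the claim for $L = \Oo$, $\Oo(\vx_2)$, $\Oo(\vx_1)$, $\Oo(2\vx_2)$, $\Oo(\vx_1+\vx_2)$, and $\Oo(\vx_1+2\vx_2)$ (one per coset). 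For each of these six cases I would write down the minimal generators of $E(L)$ as a quotient of a sum of line bundles — this uses the known structure of Auslander bundles and the Hom-spaces $\Hom{}{\Oo(\vx)}{\Oo(\vy)} = S_{\vy - \vx}$ — and verify that among the resulting line-bundle summands of $P(E)$ at least one lies in $\PP$.

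Third, the cleanest route to "at least one persistent summand" may be to avoid computing the full projective cover and instead exhibit a single nonzero morphism $\Oo(\vx) \to E(L)$ with $\vx \in \PP$ that does not factor through $\add\Ff$. A natural candidate is a section or a well-chosen lift: since $E(L)$ surjects onto $L$ via $\beta$, any persistent line bundle mapping onto $L$ with the map lifting along $\beta$ gives such a morphism; alternatively one uses the sub-bundle $L(\vom) \hookrightarrow E(L)$. Combining with Lemma~\ref{lemma:cyclic}, in each $\tau$-orbit the six consecutive bundles include two persistent ones (positions $1$ and $3$ of the pattern), so either $L$ or $L(\vom)$ or a nearby twist is persistent; I would show that a persistent one among $\{L, L(\vom)\}$ — or, when both are fading, a third persistent line bundle mapping into $E(L)$ coming from the extension structure — yields a nonzero class in $\Phi E$. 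Concretely the pattern ${+}\,{-}\,{+}\,{-}\,{-}\,{-}$ shows that for $L$ fading, $L(\vom)$ is either persistent (when $L$ is at pattern position $2$) or fading; the worst cases are positions $4, 5, 6$, and there I expect to need an auxiliary persistent line bundle $\Oo(\vx)$ with a nonzero non-fading map into $E(L)$, found by a small $\Hom$ computation.

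\textbf{Main obstacle.} The hard part will be the "worst cases" where both $L$ and $L(\vom)$ are fading (positions $4$, $5$, $6$ of the pattern): there I cannot simply use the two visible line bundles in the almost split sequence and must produce a persistent $\Oo(\vx)$ together with a morphism $\Oo(\vx) \to E(L)$ that provably does not factor through $\add\Ff$. Establishing non-factorization requires an argument in the spirit of the last paragraph of the proof of Lemma~B (showing certain Hom-compositions are nonzero in $\vect\XX/[\Ff]$ because no intermediate bundle with the required degree is fading), and getting this uniformly in $p$ — rather than case-by-case for small $p$ — is the delicate point.
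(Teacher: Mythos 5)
Your overall strategy --- normalize $L$ to the six $\ZZ\vx_3$-cosets via Lemma~\ref{lemma:cyclic} and then inspect the projective cover case by case --- is exactly the paper's, but the proposal stops short of the one computation that makes it work, and the point you flag as the ``main obstacle'' is precisely what is left unresolved. The paper's proof rests on the explicit formula
$$P(E(L))=L(\vom)\oplus\bigoplus_{i=1}^3 L(-\vx_i),$$
so the projective cover of an Auslander bundle has exactly the four summands $L(\vom)$, $L(-\vx_1)$, $L(-\vx_2)$, $L(-\vx_3)$. Once this is known, the classes of these four elements modulo $\ZZ\vx_3$ do not depend on $p$, and the whole lemma reduces to a finite table (Table~\ref{tab:persistent-summands}): for each of the six values $L\equiv\Oo(i\vom)$ one checks that at least one of $i\vom+\vom$, $i\vom-\vx_1$, $i\vom-\vx_2$, $i\vom-\vx_3$ lies in $\PP=\set{0,\vx_2}$ modulo $\ZZ\vx_3$. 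This disposes of your ``worst cases'' at once: for $L\equiv\Oo(3\vom)$, where $L$ and $L(\vom)$ are both fading, the summand $L(-\vx_1)\equiv\Oo$ is persistent, and for $L\equiv\Oo(4\vom)$ the summand $L(-\vx_2)\equiv\Oo(\vx_2)$ is. Without identifying these two extra summands $L(-\vx_1)$ and $L(-\vx_2)$ your third paragraph has no mechanism for producing the required persistent line bundle, and the uniformity in $p$ that worries you is automatic, because the verification takes place in the fixed group $\LL/\ZZ\vx_3\iso\ZZ/6\ZZ$.

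Two smaller points. First, your list of bad positions is slightly off: in the pattern ${+}\,{-}\,{+}\,{-}\,{-}\,{-}$ only positions $4$ and $5$ (that is, $L\equiv\Oo(3\vom)$ and $L\equiv\Oo(4\vom)$) have both $L$ and $L(\vom)$ fading; at position $6$ one has $L(\vom)\equiv\Oo(6\vom)\equiv\Oo$ persistent. Second, even when $L$ itself is persistent it is \emph{not} a summand of $P(E(L))$; what saves that case is $L(-\vx_3)$, which merely lies in the same coset. So the argument really must pass through the actual summands of the projective cover rather than through $L$ and a lift along $\beta$: a line bundle mapping \emph{onto} $L$ is isomorphic to $L$, and such a lift would split the almost split sequence, while for a proper non-isomorphism $L'\to L$ one would still owe a separate argument that the lifted map does not lie in $[\Ff]$.
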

\begin{proof}
  Let $L=\Oo$, then by Proposition~\ref{prop:cover_ausl_bundle} the projective cover of the Auslander bundle $E(\Oo)$ in
  $\vect\XX$  is given by the expression
  $P(E(\Oo))=\Oo(\vom)\oplus\bigoplus_{i=1}^3 \Oo(-\vx_i)$,
  and $\Oo(-\vx_3)$ is persistent. The assertion clearly also holds
  for $L=\Oo(n\vx_3)$ ($n\in\mathbb{Z}$). By the preceding lemma
  it then suffices to show that after
  twisting the expression for $P(E(\Oo))$ with $i\vom$ (for
  $i=1,\dots,5$) there will always exist a persistent line bundle on
  the right hand side.
  \begin{table}[h!]
    \centering
    $$\begin{array}{|c||c|c|c|c|}
        \hline
        \vx & \vx+\vom & \vx-\vx_1 & \vx-\vx_2 & \vx-\vx_3 \\
       \hline
       0\vom & \vx_1+2\vx_2 & \vx_1 & 2\vx_2 & \text{\fbox{$0$}}\\
       1\vom & \text{\fbox{$\vx_2$}} & 2\vx_2 & \vx_1 +\vx_2 & \vx_1 +2\vx_2 \\
       2\vom & \vx_1 & \vx_1+\vx_2 & \text{\fbox{$0$}} &
        \text{\fbox{$\vx_2$}}\\
       3\vom & 2\vx_2 & \text{\fbox{$0$}} & \vx_1 +2\vx_2 & \vx_1\\
       4\vom & \vx_1+\vx_2 & \vx_1+2\vx_2 & \text{\fbox{$\vx_2$}} & 2\vx_2\\
       5\vom & \text{\fbox{$0$}} & \text{\fbox{$\vx_2$}} & \vx_1 &
        \vx_1+\vx_2\\
        \hline
    \end{array}$$
    \caption{Persistent direct summands of $P(E)$}
    \label{tab:persistent-summands}
  \end{table}
  This follows from Table~\ref{tab:persistent-summands} with entries
  from $\LL$ modulo $\ZZ\vx_3$, where elements from $\PP$ are boxed.
 Since each row in the table contains an element from
  $\PP$ the claim follows.
\end{proof}

\begin{lemma}\label{lemma:zero-phi-of-map-from-AB}
  Let $E$ be an Auslander bundle and $u\colon E\ra X$ be a morphism in
  $\vect\XX$ with $\Phi u=0$. Then $u\in [\Ff]$.
\end{lemma}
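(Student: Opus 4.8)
The plan is to reduce the statement to a structural fact about the projective cover of the Auslander bundle $E$. Write the almost split sequence $0\ra L(\vom)\up{\alpha}E\up{\beta}L\ra 0$ as in~\eqref{eq:almost-split-auslander}, and recall from Lemma~\ref{lemma:persistent-direct-summand} that the projective cover $\pi\colon P(E)\ra E$ has at least one persistent line bundle $L_0$ among its indecomposable summands; by twisting with a suitable multiple of $\vx_3$ (which, by Theorem~C, amounts to the shift $s$ and so does not affect the conclusion) I may assume $L_0$ lies in the $p$-ladder $\ulPp$ in a convenient position, say $L_0=\Oo$ or $L_0=\Oo(\vx_2)$. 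The point of the hypothesis $\Phi u=0$ is that $\ulPp(L,u)=0$ for every persistent $L$, i.e.\ every morphism $L\ra E\ra X$ with $L\in\Pp$ factors through a finite sum of fading bundles. I want to upgrade this pointwise vanishing to the single statement that $u\pi$, hence $u$, factors through $\add\Ff$.

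First I would compose $u$ with the projective cover to get $u\pi\colon P(E)\ra X$, and restrict to the persistent summand $L_0\ra P(E)\up{\pi}E\up{u}X$. Since this composite lies in $\ulPp(L_0,X)$ and $\Phi u=0$, it factors through $\add\Ff$. The real work is to show that the \emph{remaining} summands of $P(E)$ — the fading ones, together with whatever persistent summands occur — can all be absorbed into $[\Ff]$ as well. For the fading summands there is nothing to do: any morphism out of a fading line bundle is by definition in $[\Ff]$. For a second persistent summand (the table in Lemma~\ref{lemma:persistent-direct-summand} shows there can be more than one, but they are few and explicitly located relative to $L_0$), I would argue that $\Phi u=0$ forces its contribution into $[\Ff]$ too, exactly as for $L_0$. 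Concretely, $P(E)$ from~\eqref{eq:proj-cover-ausl-bundle} has the shape $L(\vom)\oplus\bigoplus_{i}L(-\vx_i)$, and inspecting which of these are persistent (row-by-row in Table~\ref{tab:persistent-summands}) I can list, up to the $6$-periodicity of Lemma~\ref{lemma:cyclic}, precisely which summands need the $\Phi u=0$ input and which are automatically fading. In every case each summand $Q$ of $P(E)$ satisfies: $Q$ fading $\Rightarrow$ $Q\ra X$ lies in $[\Ff]$ trivially; $Q$ persistent $\Rightarrow$ $Q\ra X$ lies in $[\Ff]$ by $\Phi u=0$. Hence $u\pi\in[\Ff](P(E),X)$.

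It remains to pass from $u\pi\in[\Ff]$ to $u\in[\Ff]$. This is where I would use that $\pi\colon P(E)\ra E$ is not merely an epimorphism of sheaves but a \emph{distinguished} epimorphism in the Frobenius exact structure on $\vect\XX$, so its kernel inclusion $0\ra E'\ra P(E)\up{\pi}E\ra 0$ is distinguished exact, and by the Corollary after Proposition~\ref{prop:image-in-Sp}, $\Phi$ is exact on such sequences. Write $u\pi=\sum_j \beta_j\al_j$ with $\al_j\colon P(E)\ra F_j$, $\beta_j\colon F_j\ra X$ and $F_j\in\add\Ff$; I need to produce a factorization of $u$ itself, not of $u\pi$, through $\add\Ff$. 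The cleanest route is to apply Proposition~\ref{prop:2-generated}: the persistent summands of $P(E)$ each come equipped, via $\eta_1$ or $\eta_2$, with a two-term presentation by fading bundles, and the factoring data $\al_j$ through fading bundles can be combined with the splitting $E\to P(E)$ coming from projectivity of $L_0$'s summand (or, more robustly, with the universal property of the pushout diagram~\eqref{eq:pushout-diagram} exactly as in the last paragraph of the proof of Proposition~\ref{prop:image-in-Sp}) to descend the factorization to $E$.

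I expect the main obstacle to be precisely this last descent step: controlling the interaction between the map $\pi$ and the fading factorizations so that a factorization of $u\pi$ through $\add\Ff$ yields one of $u$. The bookkeeping is manageable because $P(E)$ has so few summands and their persistent/fading pattern is pinned down by Lemma~\ref{lemma:cyclic} and Table~\ref{tab:persistent-summands}, but the pushout argument must be run once for each persistent summand and the resulting partial factorizations assembled; keeping track of which intermediate fading bundle receives which component is the delicate point. Everything else — the vanishing of $\ulPp(L,u)$ for persistent $L$, the triviality of morphisms out of fading bundles, the exactness of $\Phi$ on distinguished sequences — is immediate from the results already established.
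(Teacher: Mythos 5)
Your reduction to the projective cover does not gain any ground: since every line bundle is projective for the distinguished exact structure and $\pi\colon P(E)\ra E$ is a distinguished epimorphism, the assertion ``$u\pi\in[\Ff]$'' is \emph{equivalent} to the hypothesis $\Phi u=0$ (any $f\colon L\ra E$ with $L$ persistent lifts through $\pi$, and conversely). So the first half of your argument merely restates the problem, and everything hinges on the ``descent step,'' which is exactly the content of the lemma and which your sketch does not supply. The mechanism you propose for it does not exist: there is no ``splitting $E\to P(E)$'' (the Auslander bundle $E$ is indecomposable of rank $2$, not projective, so $\pi$ does not split), and the pushout diagram~\eqref{eq:pushout-diagram} in the proof of Proposition~\ref{prop:image-in-Sp} concerns maps out of persistent line bundles, not the passage from $P(E)$ to $E$. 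In general, knowing that $uf\in[\Ff]$ for every $f\colon L\ra E$ with $L\in\Pp$ does not by itself produce a single object of $\add\Ff$ through which $u$ factors; constructing such an object is the whole point.

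The missing idea is to exploit the rank-two structure of $E$ itself rather than its projective cover: for each $\vy\in\{-\vom,\vx_1,\vx_2,\vx_3\}$ one has a short exact sequence $0\ra L(-\vy)\up{\alpha'}E\up{\beta'}L(\vy+\vom)\ra 0$ (this already requires a diagram-chase argument, step (2) of the paper's proof), and one chooses $\vy$ so that the subobject $L(-\vy)$ is persistent while the quotient $L(\vy+\vom)$ is fading. The hypothesis $\Phi u=0$ gives $u\alpha'\in[\Ff]$, hence a factorization of $u\alpha'$ through the fading presentation $L(-\vy)\ra L_1\oplus L_2$ of Proposition~\ref{prop:2-generated}; forming the pushout of $E$ along $L(-\vy)\ra L_1\oplus L_2$ yields a bundle $\overline{E}$ through which $u$ factors, sitting in an extension of the fading $L(\vy+\vom)$ by $L_1\oplus L_2$. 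One must then \emph{prove} that $\overline{E}\in\add\Ff$ by showing $\Ext{1}{}{L(\vy+\vom)}{L_i}=0$, i.e.\ (Serre duality) $\Hom{}{L_i}{L(\vy+2\vom)}=0$, which is a genuine case-by-case verification using Table~\ref{tab:persistent-summands}. None of these three ingredients --- the exact sequences~\eqref{eq:ses2}, the pushout along the fading presentation of the \emph{sub}bundle of $E$, and the Ext-vanishing computation --- appears in your proposal, so the argument as written cannot be completed.
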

\begin{proof} We divide the proof into several steps. Note that step~(1) and (2) hold for general weight triples, and only step~(3) requests weight type $(2,3,p)$.

  (1) Let $E=E(L)$ as in~\eqref{eq:almost-split-auslander}.
  By~\eqref{eq:proj-cover-ausl-bundle-L} the projective cover of $E$ is
  given by the expression $P(E)=L(\vom)\oplus\bigoplus_{i=1}^3 L(-\vx_i)$.
  By
  Lemma~\ref{lemma:persistent-direct-summand} at least one of the line
  bundles $L(\vom)$, $L(-\vx_1)$, $L(-\vx_2)$, $L(-\vx_3)$ is
  persistent.

  (2) \emph{Claim.} Let $\vy\in\{-\vom,\vx_1,\vx_2,\vx_3\}$. Then
  there is an exact sequence
  \begin{equation}
    \label{eq:ses2}
    0\lra
  L(-\vy)\lup{\;\alpha'\;}E\lup{\;\beta'\;}L(\vy+\vom)\lra
  0.
  \end{equation}
  Indeed, if $\vy=-\vom$, we take the almost split
  sequence~\eqref{eq:almost-split-auslander}. If $\vy=\vx_i$, then
  we are going to show that there is an exact sequence
  \begin{equation}
    \label{eq:ses}
    0\lra
  L(-\vx_i)\lup{\;\pi_i\;}E\lup{\;\kappa_i\;}L(\vx_i +\vom)\lra 0,
  \end{equation}
  where $\pi_i$ is induced by $x_i \colon L(-\vx_i)\ra L$ such that
  $\beta\pi_i =x_i$, and similarly $\kappa_i$ is such that $\kappa_i
  \alpha=x_i\colon L(\vom)\ra L(\vom+\vx_i)$. Indeed we have $\kappa_i
  \pi_i=0$, since $\Hom{}{L(-\vx_i)}{L(\vx_i +\vom)}=0$. By Proposition~\ref{prop:surprising} the morphism $\kappa_i$ is an epimorphism. As a
  non-zero map from a line bundle the map $\pi_i$
  is a monomorphism. We put $U=\kernel{\kappa_i}/\image{\pi_i}$. Since rank and degree are additive on the sequence~\eqref{eq:ses} we obtain $U=0$ and hence the exactness of \eqref{eq:ses}.

(3) Let $\vy\in\{-\vom,\vx_1,\vx_2,\vx_3\}$ be such that $L(-\vy)$ is
persistent, by step (1). It follows
that there is a short exact sequence $$0\lra L(-\vy)\up{a} L_1 \oplus
L_2 \up{b} L_3 \lra 0$$
with fading line bundles $L_1,L_2,L_3$, and satisfying the properties
of Proposition~\ref{prop:2-generated}.
Since $\Phi u=0$ and thus $u\alpha' \in [\Ff]$ we obtain a commutative square
$$
\xymatrix@C18pt@R18pt{ E \ar @{->}[r]^-{u} & X\\
  L(-\vy) \ar @{->}[u]^-{\alpha'} \ar @{->}[r]^-{a} & L_1 \oplus L_2
  \ar @{->}[u]_-{c}. }
$$
We next form the pushout diagram
$$
\xy\xymatrixcolsep{1.5pc}\xymatrix@C18pt@R18pt{ & 0 & 0 & & \\ & L(\vy+\vom) \ar
  @{=}[r] \ar @{->}[u] & L(\vy+\vom) \ar @{->}[u] & & \\
  0\ar @{->}[r] & E\ar @{->}[r] \ar @{->}[u]^-{\beta'} & \overline{E}
  \ar @{->}[r] \ar @{->}[u] & L_3
  \ar @{->}[r] &0\\
  0\ar @{->}[r]& L (-\vy) \ar @{->}[r]^-{a} \ar @{->}[u]^-{\alpha'} &
  L_1 \oplus L_2 \ar @{->}[r]^-{b} \ar @{->}[u] & L_3 \ar @{->}[r] \ar
  @{=}[u] &0 \\ & 0 \ar @{->}[u] & 0 \ar @{->}[u] & & } \endxy
$$
Since $u$ factors through $\overline{E}$, it is sufficient to show
that $\overline{E}\in\add{\Ff}$.

  One checks easily that $L(\vy+\vom)$ is fading. (For $\vy=-\vom$
  this follows from the $6$-periodic pattern in
  Lemma~\ref{lemma:cyclic}.) Therefore it is sufficient to show, that
  $\Ext{1}{}{L(\vy+\vom)}{L_i}=0$, by Serre duality equivalently,
  that $\Hom{}{L_i}{L(\vy+2\vom)}=0$ (for $i=1,2$).

  By Proposition~\ref{prop:2-generated} we can assume that $L_1
  =L(-\vy+\vx_1)$, and $L_2=L(-\vy+2\vx_2)$ if $L(-\vy)$ is from the
  upper bar, and $L_2=L(-\vy+\vx_2)$ if $L(-\vy)$ is from the lower
  bar. Therefore, one has to check whether
  $\Hom{}{\Oo}{\Oo(2\vy+2\vom-\vx)}$ is zero, that is, whether
  $2\vy+2\vom-\vx\not\geq 0$ for $\vx\in\{\vx_1,\vx_2,2\vx_2\}$. There
  are two cases:

  \emph{1.~case.} Assume that $P(E)$ admits a direct summand $L(-\vy)$
  which is a persistent line bundle from the upper bar. In this case
  $\vx\in\{\vx_1,2\vx_2\}$. Table~\ref{tab:persistent-summands} shows
  that we can assume $L=\Oo(i\vom)$ for $i=0,\,2,\,3,\,5$, and the
  value of $\vy$ can also extracted from that table. In all these cases
  it is easy to see that the condition $2\vy+2\vom-\vx\not\geq 0$ is satisfied.

  \emph{2.~case.} Assume that each persistent line bundle summand of
  $P(E)$ is from the lower bar. In this case $\vx\in\{\vx_1,\vx_2\}$.
  Table~\ref{tab:persistent-summands} shows that we can assume
  $L=\Oo(\vom)$ and $\vy=-\vom$, or $L=\Oo(4\vom)$ and $\vy=\vx_2$.
  In these cases again the condition $2\vy+2\vom-\vx\not\geq 0$ holds.
\end{proof}

\begin{proposition}
  Let $X\in\vect\XX$ be indecomposable such that $\Phi X=0$. Then
  $X\in\Ff$.
\end{proposition}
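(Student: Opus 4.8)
The plan is to split into cases according to the rank of $X$ and feed each case into one of the lemmas already proved. First I would dispose of the possibility that $X$ has rank at least $2$. By Lemma~\ref{lemma:exists-map-from-AB} there is then an Auslander bundle $E$ and a morphism $u\colon E\ra X$ with $u\notin[\Ll]$. Since $\Phi X=0$, the induced map $\Phi u\colon\Phi E\ra\Phi X$ is the zero map, so Lemma~\ref{lemma:zero-phi-of-map-from-AB} applies verbatim and gives $u\in[\Ff]$. But fading line bundles are line bundles, hence $[\Ff]\subseteq[\Ll]$ and $u\in[\Ll]$, a contradiction. Thus an indecomposable $X$ with $\Phi X=0$ must have rank $1$, i.e.\ be a line bundle $\Oo(\vx)$.

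It then remains to show $\Oo(\vx)\in\Ff$, equivalently to rule out that $L:=\Oo(\vx)$ is persistent. Here I would simply note that a persistent line bundle is a nonzero object of $\ulPp$, so that $\Phi L=\ulPp(-,L)$ contains the nonzero morphism $\mathrm{id}_L$ and is in particular nonzero, contradicting $\Phi L=0$. The nonvanishing of $\mathrm{id}_L$ in $\ulPp$ --- i.e.\ the fact that $\mathrm{id}_L\notin[\Ff]$ --- follows from the explicit description of $\ulPp$ as a path category modulo positive-degree relations in Lemma~B; alternatively it is immediate on degrees, since after a shift by $\ZZ\vx_3$ we may take $L=\Oo$, and any composite $\Oo\ra\Oo(\vy)\ra\Oo$ through a fading line bundle $\Oo(\vy)$ lies in $S_{\vy}S_{-\vy}\subseteq S_0$ and hence vanishes unless $\vy=\vec{0}$, while $\vec{0}\in\PP$. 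Therefore $L$ is fading, which is the assertion.

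I do not anticipate a genuine obstacle here: the statement is an assembly of Lemmas~\ref{lemma:persistent-direct-summand}, \ref{lemma:exists-map-from-AB} and~\ref{lemma:zero-phi-of-map-from-AB} (the first entering through the third) together with the trivial observation that $\Phi$ does not annihilate persistent line bundles. The only point deserving care is that the map $u$ supplied by Lemma~\ref{lemma:exists-map-from-AB} automatically lands in $[\Ff]$ once it is killed by $\Phi$ --- but that is precisely the content of Lemma~\ref{lemma:zero-phi-of-map-from-AB}, so the argument closes at once, and in fact one sees slightly more: for every indecomposable $X$ of rank $\geq2$, any morphism from an Auslander bundle into $X$ that dies under $\Phi$ already factors through fading bundles.
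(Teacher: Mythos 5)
Your proof is correct and follows essentially the same route as the paper: the rank~$\geq 2$ case is handled exactly as in the paper by combining Lemma~\ref{lemma:exists-map-from-AB} with the contrapositive of Lemma~\ref{lemma:zero-phi-of-map-from-AB} and the inclusion $[\Ff]\subseteq[\Ll]$, while the line bundle case (which the paper dismisses as ``clear'') is settled by your correct degree argument showing $\mathrm{id}_L\notin[\Ff]$ for persistent $L$.
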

\begin{proof}
  If $X$ is a line bundle this is clear. Assume $\rank X\geq 2$. By
  Lemma~\ref{lemma:exists-map-from-AB} we obtain a morphism $u\colon
  E\ra X$ where
  $E$ is an Auslander bundle and $u\not\in [\Ll]$. By
  Lemma~\ref{lemma:zero-phi-of-map-from-AB} we get $0\neq\Phi
  u\colon\Phi E\ra\Phi X$, in particular $\Phi X\neq 0$.
\end{proof}
Let $\si_i$, $i=1,2,3$, denote the degree shift (line bundle twist) $E\mapsto
E(\vx_i)$. Then the isomorphism classes of line bundles decompose into
$6$ orbits under the action of the group $\langle\si_3\rangle$.
\begin{corollary}
  Let $X$ be an indecomposable bundle of $\rank X\geq 2$. Then the
  projective cover
  $P(X)$ of $X$ admits a persistent line bundle as a direct
  summand. Moreover $P(X)$ admits line bundle summands from at least
  four pairwise distinct $\langle\si_3\rangle$-orbits.
\end{corollary}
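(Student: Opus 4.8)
\emph{Overview and the persistent summand.} The plan is to obtain the first assertion from the preceding proposition by a lifting argument, and the second by playing the morphism supplied by Lemma~\ref{lemma:exists-map-from-AB} off against the exact sequences~\eqref{eq:ses2}. For the first assertion: since $\rank X\geq 2$, the bundle $X$ is not a fading line bundle, so $\Phi X\neq 0$ by the preceding proposition. Hence there are a persistent line bundle $L$ and a morphism $f\colon L\ra X$ with $f\notin[\Ff](L,X)$. As $L$ is projective in $\vect\XX$ and $\pi\colon P(X)\ra X$ is a distinguished epimorphism, $f$ lifts to some $\tilde f\colon L\ra P(X)$. Write $P(X)=P_{\Pp}\oplus P_{\Ff}$, the sum of the persistent, respectively the fading, indecomposable summands. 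If $P_{\Pp}=0$, then $\tilde f$, and hence $f=\pi\tilde f$, factors through $P_{\Ff}\in\add{\Ff}$, contradicting $f\notin[\Ff](L,X)$; so $P(X)$ has a persistent summand. (This will also follow from the argument below, which moreover yields the second assertion.)

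\emph{Four orbits.} By Lemma~\ref{lemma:exists-map-from-AB} fix an Auslander bundle $E=E(L_0)$ and $u\colon E\ra X$ with $u\notin[\Ll]$. By~\eqref{eq:proj-cover-ausl-bundle-L} the projective cover of $E$ is $P(E)=L_0(\vom)\oplus\bigoplus_{i=1}^{3}L_0(-\vx_i)$; write its indecomposable summands as $P_{\vy}=L_0(-\vy)$, $\vy\in\{-\vom,\vx_1,\vx_2,\vx_3\}$, with components $\iota_{\vy}\colon P_{\vy}\ra E$ of $P(E)\ra E$. By step~(2) of the proof of Lemma~\ref{lemma:zero-phi-of-map-from-AB}, each $\iota_{\vy}$ is the left-hand map of an exact sequence~\eqref{eq:ses2} whose cokernel is the line bundle $L_0(\vy+\vom)$. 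Reducing modulo $\ZZ\vx_3$ (and using $-\vx_3\equiv0$, $-\vx_1\equiv\vx_1$, $-\vx_2\equiv2\vx_2$, $\vom\equiv\vx_1+2\vx_2$ from~\eqref{eqn:cyclic}), the classes of the four $P_{\vy}$ are $[L_0]+\{0,\vx_1,2\vx_2,\vx_1+2\vx_2\}$, which are four distinct elements of $\LL/\ZZ\vx_3\iso\ZZ/2\times\ZZ/3$. Thus the $P_{\vy}$ lie in four pairwise distinct $\langle\si_3\rangle$-orbits, one of which is persistent by Table~\ref{tab:persistent-summands} (equivalently, Lemma~\ref{lemma:persistent-direct-summand}). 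It therefore suffices to show that all four of these orbits occur among the line-bundle summands of $P(X)$.

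\emph{The main step, and the obstacle.} Suppose not; then the summands of $P(X)$ lie in at most three $\langle\si_3\rangle$-orbits, so some $P_{\vy_0}$ lies in an orbit absent from $P(X)$. Lifting $u\pi_E$ through $\pi_X\colon P(X)\ra X$ gives $v\colon P(E)\ra P(X)$ with $\pi_X v=u\pi_E$, whence $u\iota_{\vy_0}=\pi_X(v|_{P_{\vy_0}})$. Since no summand of $P(X)$ is isomorphic to $P_{\vy_0}$, every nonzero line-bundle component of $v|_{P_{\vy_0}}$ has strictly larger degree than $P_{\vy_0}$, so $u\iota_{\vy_0}$ factors through a sum of line bundles of degree $>\deg P_{\vy_0}$. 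On the other hand $u\iota_{\vy_0}\neq0$, as otherwise $u$ would factor through the line bundle $L_0(\vy_0+\vom)$. To finish I would argue as follows: writing a factorization $u\iota_{\vy_0}=gh$ with $h=(h_j)_j\colon P_{\vy_0}\ra\bigoplus_j L'_j$ whose components are non-isomorphisms of line bundles and $g=(g_j)_j\colon\bigoplus_j L'_j\ra X$, if $h$ extends along $\iota_{\vy_0}$ to $\bar h\colon E\ra\bigoplus_j L'_j$, then $u-g\bar h$ kills $\iota_{\vy_0}(P_{\vy_0})$, hence factors through $L_0(\vy_0+\vom)$, so $u$ factors through $\bigl(\bigoplus_j L'_j\bigr)\oplus L_0(\vy_0+\vom)$ and lies in $[\Ll]$ --- a contradiction. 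The main obstacle is that this extension along $\iota_{\vy_0}$ is not automatic: its obstruction lies in $\mathrm{Ext}^1_{\XX}\!\bigl(L_0(\vy_0+\vom),\bigoplus_j L'_j\bigr)$, so one must verify, for each relevant $\vy_0$ (by Lemma~\ref{lemma:cyclic} one may take $L_0=\Oo(i\vom)$, $i=0,\dots,5$) and each line bundle $L'$ of degree $>\deg P_{\vy_0}$ admitting a nonzero morphism to $X$ --- so $\deg L'$ bounded above by the slope of the maximal-slope line bundle mapping to $X$ of Lemma~\ref{lemma:exists-map-from-AB} --- that the pertinent $\mathrm{Ext}^1$-class vanishes (or at least annihilates the class of~\eqref{eq:ses2}). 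This is a bounded computation governed by Proposition~\ref{prop:2-generated} and the partition $\Ll=\Pp\sqcup\Ff$, and it is the technical heart of the argument. Granting it, $P(X)$ has summands in at least four $\langle\si_3\rangle$-orbits, one of them persistent, which proves both assertions.
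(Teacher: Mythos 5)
Your proof of the first assertion is correct and is essentially the paper's (the paper records it in one line: $\Phi X\neq0$ forces a persistent summand of $P(X)$, since otherwise every map from a persistent line bundle to $X$ would factor through $P(X)\in\add\Ff$). For the second assertion, however, there is a genuine gap, and I doubt the route can be completed. You reduce the claim to showing that the four $\langle\si_3\rangle$-orbits of the summands of $P(E)$ all occur among the summands of $P(X)$, and the decisive step --- extending $h$ along $\iota_{\vy_0}$, i.e.\ the vanishing of the relevant classes in $\Ext{1}{\XX}{L_0(\vy_0+\vom)}{L'_j}$ --- is explicitly left unverified; that is not a routine check but the entire content of the claim. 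Worse, the target of your reduction looks too strong: your degree argument uses only that $P_{\vy_0}$ itself is not a summand of $P(X)$, so if it worked it would force the four specific line bundles $L_0(\vom),L_0(-\vx_1),L_0(-\vx_2),L_0(-\vx_3)$ to be summands of $P(X)$. Even at the level of orbits the reduction is doubtful: the orbit classes of $P(E)$ are $[L_0]+\{0,1,3,4\}\vom$ in $\LL/\ZZ\vx_3\iso\ZZ/6$, while the paper's remark following the corollary shows that the orbit set $U$ of $P(X)$ is, up to rotation, one of exactly two four-element patterns, and the other one (four consecutive classes) never contains the $P(E)$-pattern. Your argument would have to exclude that configuration, and nothing in the proposal does; correspondingly the postponed $\Ext{1}{\XX}{-}{-}$-computation cannot come out in your favour in all cases.

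The paper's own proof of the second assertion is short and purely combinatorial, and avoids Auslander bundles entirely: since $X(n\vom)$ is again indecomposable of rank $\geq2$ and $P(X(n\vom))=P(X)(n\vom)$, the first assertion applied to every twist shows that for each $n$ the set $U\subseteq\LL/\ZZ\vx_3\iso\ZZ/6$ of orbit classes of $P(X)$ meets $\{n,n+2\}$ (the persistent classes being $0$ and $2\vom$, with $\vom$ a generator of $\LL/\ZZ\vx_3$). Hence the complement of $U$ contains no pair $\{m,m+2\}$, i.e.\ it is an independent set in the two disjoint triangles $\{0,2,4\}$ and $\{1,3,5\}$, so it has at most two elements and $|U|\geq4$. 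I suggest replacing the second half of your argument by this twisting argument.
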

\begin{proof}
Since $\Phi(X)\neq0$ if and only if $P(X)$ contains a persistent line
bundle, the first claim immediately follows.
Concerning the last claim we recall that the class of $\vom$ generates $\LL/\ZZ\vx_3$ which
is cyclic of order $6$ and that the classes of $0$ and $2\vom$
represent the persistent members of $\LL$. For each integer $n$,
then also  $P(X)(n\vom)=P(X(n\vom))$ contains a persistent line
bundle. Let $U$ be the subset of
$\LL/\ZZ\vx_3$ corresponding to the $\langle\si_3\rangle$-orbits of
line bundles in $P(X)$. By the above, for each integer $n$ the set $U$
must contain $n$ or $n+2$. As is easily checked this implies $|U|\geq
4$, proving the claim.
\end{proof}
Actually there are, up to cyclic permutation, just two possibilities
for a four-element subset $U$ as above, given by the two following
patterns, where a black dot indicates membership in $U$.
$$
\def\b{\bullet}
\def\c{\circ}
\xymatrix@-1pc@!R=7pt@!C=4pt{
                         &\b\ar@{-}[r] & \b\ar@{-}[rd] &      \\
\c\ar@{-}[ru]\ar@{-}[rd] &             &               &  \c  \\
                         &\b\ar@{-}[r] & \b\ar@{-}[ru] &      \\
} \quad \quad
\xymatrix@-1pc@!R=7pt@!C=4pt{
                         &\b\ar@{-}[r] & \b\ar@{-}[rd] &      \\
\b\ar@{-}[ru]\ar@{-}[rd] &             &               &  \b  \\
                         &\c\ar@{-}[r] & \c\ar@{-}[ru] &      \\
}
$$
\begin{lemma}
  Assume $P\in\add\Ll$. There exists an exact sequence $\eta\colon
  0\ra P \up{\alpha} P_0 \up{\beta} P_1 \ra 0$ in $\coh\XX$ with
  $P_0,\,P_1$ from $\add\Ff$ such that:
\begin{enumerate}
\item for each persistent line bundle $L'$ the
sequence $\Hom{}{L'}{\eta}$ is exact;
\item For each fading line bundle $L'$ the sequence $\Hom{}{\eta}{L'}$
  is exact.
\end{enumerate}
\end{lemma}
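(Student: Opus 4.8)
The plan is to reduce the general statement to the case of a single line bundle $P=\Oo(\vx)$, since all relevant structure — the sequences $\eta_1$ and $\eta_2$ from Proposition~\ref{prop:2-generated}, the partition $\Ll=\Pp\sqcup\Ff$, and the exactness conditions — is compatible with finite direct sums. So I would first observe that if $\eta$ is constructed for each indecomposable summand of $P$, then the direct sum of the resulting sequences has the required form and satisfies (1) and (2), because $\Hom{}{L'}{-}$ and $\Hom{}{-}{L'}$ commute with finite direct sums and exactness is preserved. Thus it suffices to treat $P$ a single line bundle.

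For a single line bundle $L=\Oo(\vx)$, I would split into two cases according to whether $L$ is persistent or fading. If $L$ is \emph{fading}, the situation is trivial: one may take $P_0=L$, $P_1=0$ and $\alpha=\mathrm{id}$, since then $\Hom{}{L'}{\eta}$ and $\Hom{}{-}{L'}$ are obviously exact. So the real content is the \emph{persistent} case. Here I would invoke Proposition~\ref{prop:2-generated} directly: after a shift by an element of $\ZZ\vx_3$ (which preserves everything in sight, by Lemma~\ref{lemma:cyclic}, since $\ZZ\vx_3$ acts trivially on the persistent/fading partition) we may assume $L=\Oo$ or $L=\Oo(\vx_2)$. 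Then $\eta=\eta_1$ (if $L$ is in the upper bar) or $\eta=\eta_2$ (if $L$ is in the lower bar) is the desired sequence: Proposition~\ref{prop:2-generated}(1) already asserts that $P_0,P_1\in\add\Ff$ and that $\Hom{}{\eta}{F}$ is exact for every $F\in\add\Ff$, which is condition (2). What remains is condition (1): exactness of $\Hom{}{L'}{\eta}$ for every persistent line bundle $L'$.

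For condition (1), the left-exactness of $\Hom{}{L'}{-}$ is automatic, and surjectivity of $\Hom{}{L'}{P_0}\ra\Hom{}{L'}{P_1}$ is the point to check. I would argue this by a degree/slope count in $\LL$: the middle and right terms of $\eta_1$ (respectively $\eta_2$) are explicit twists $\Oo(\vx_1),\Oo(2\vx_2),\Oo(\vx_1+2\vx_2)$ (resp.\ $\Oo(\vx_1),\Oo(\vx_2),\Oo(\vx_1+\vx_2)$), and using $\Hom{\XX}{\Oo(\vx)}{\Oo(\vy)}=S_{\vy-\vx}$ together with the structure of $S$ as a polynomial ring modulo the triangle relation, one checks that the connecting map vanishes, i.e.\ $\Ext{1}{}{L'}{L}=0$ for $L'$ persistent and $L$ the left term; by Serre duality this is equivalent to $\Hom{}{L}{L'(\vom)}=0$, a statement about non-negativity of a specific element of $\LL$ that can be verified using the normal form~\eqref{eq:normalform} — precisely the kind of combinatorial check already performed in Lemma~\ref{lemma:cyclic} and in the proof of Lemma~\ref{lemma:zero-phi-of-map-from-AB}.

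The main obstacle is the case analysis in condition (1): one must rule out $\Hom{}{L}{L'(\vom)}\neq 0$ for \emph{all} persistent $L'$ simultaneously, and $L'$ ranges over infinitely many line bundles (both bars, all shifts by $\vx_3$). The reduction that saves the argument is that $\Hom{}{\Oo(\vx)}{\Oo(\vy)}$ only depends on $\vy-\vx$, and $L$ being $\Oo$ or $\Oo(\vx_2)$ fixes $\vx$; meanwhile the relevant differences $\vy-\vx+\vom$ land, modulo $\ZZ\vx_3$, in the finite group $\LL/\ZZ\vx_3$ of order $6$, so only finitely many residues need to be examined, and within each residue class the non-negativity condition in the $\vx_3$-direction can fail in only one direction. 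I expect this to collapse to essentially the same boxed-entry tables as in Lemma~\ref{lemma:zero-phi-of-map-from-AB}, so the bookkeeping, while finite, must be done carefully to cover both bars.
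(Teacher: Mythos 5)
Your reduction to a single line bundle, the trivial treatment of the fading case, the choice of $\eta_1$ resp.\ $\eta_2$ from Proposition~\ref{prop:2-generated}, and the derivation of condition~(2) all match the paper's proof. The gap is in condition~(1). You propose to get surjectivity of $\Hom{}{L'}{P_0}\ra\Hom{}{L'}{P_1}$ from the vanishing $\Ext{1}{}{L'}{L}=0$, equivalently $\Hom{}{L}{L'(\vom)}=0$, for every persistent $L'$. This vanishing is false: take $L=\Oo$ and $L'=\Oo(N\vx_3)$ with $N\geq p+1$; both are persistent, yet $N\vx_3+\vom=\vx_1+2\vx_2+(N+p-1)\vx_3-2\vc$ is $\geq 0$ once $N+p-1\geq 2p$, so $\Hom{}{\Oo}{\Oo(N\vx_3+\vom)}=S_{N\vx_3+\vom}\neq0$ and hence $\Ext{1}{}{L'}{L}\neq0$. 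Your own remark that the non-negativity condition ``can fail in only one direction'' along the $\vx_3$-axis is exactly the problem: for sufficiently positive shifts of $L'$ it does become non-negative, and condition~(1) must hold for \emph{all} persistent $L'$. So the connecting map vanishes not because its target is zero but because of the specific shape of $\eta_n$, and your degree count cannot establish this.

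The argument that works (and is the one the paper uses, referring back to the proof of Lemma~B) shows directly that every morphism $h\colon L'\ra L(\vx_1+n\vx_2)$ with $L,L'$ persistent factors through $x_1\colon L(n\vx_2)\ra L(\vx_1+n\vx_2)$, hence through the middle term of $\eta_n$. Indeed, modulo the relation $x_1^2=-(x_2^3+x_3^p)$ one may write $h$ as a linear combination of monomials $x_1^{a}x_2^{b}x_3^{c}$ with $a\in\{0,1\}$; since $L$ and $L'$ are both persistent, the degree of $h$ is congruent to $\vx_1+(n+\epsilon)\vx_2$ with $\epsilon\in\{0,\pm1\}$ modulo $\ZZ\vx_3$, and since $\px_1$ has order $2$ in $\LL/\ZZ\vx_3\iso\ZZ/2\oplus\ZZ/3$ this forces $a=1$ in every monomial. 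Each monomial therefore factors as $x_1\cdot(x_2^{b}x_3^{c})$ with $x_2^{b}x_3^{c}\colon L'\ra L(n\vx_2)$, which is exactly the needed lifting. With this step substituted for your $\Ext{1}{}{L'}{L}=0$ claim, the rest of your write-up stands.
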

\begin{proof}
  It suffices to show the statement if $P=L$ is indecomposable.  If
  $L\in\Ff$, then one can take $\eta\colon 0\ra L\ra L \ra 0\ra 0$.
  Let now $L\in\Pp$. Then let $$\eta=\eta_n\colon 0\lra
  L\lup{(x_1,x_2^n)^t} L(\vx_1)\oplus
  L(n\vx_2)\lup{(-x_2^n,x_1)}L(\vx_1+n\vx_2)\lra 0$$
  be one
  of the sequences from Proposition~\ref{prop:2-generated}, where
  $n=2$ if $L$ is from the upper bar and $n=1$ if $L$ is from the
  lower bar. Condition~(2) follows from~\ref{prop:2-generated}. Let $L'$ be a persistent line bundle and $h\colon L'\ra L(\vx_1+n\vx_2)$ a morphism. Without loss of generality assume that $h\neq 0$. By considering the four possible cases $n=1$ or $n=2$ and $L'$ from the upper or from the lower bar, respectively, one shows that $h\in\Hom{}{L'}{L(\vx_1+n\vx_2)}=x_1 \Hom{}{L'}{L(n\vx_2)}$, in particular $h$ factors through the middle term of $\eta$. This shows condition~(1).
\end{proof}
The next result constitutes a key step in our proof of Theorems~A and~C.
\begin{proposition} \label{prop:kernel}
  Each morphism $h\colon E\ra F$ in $\vect\XX$ with $\Phi(h)=0$ belongs
  to the ideal $[\Ff]$, that is, $h$ factors through a member of $\add\Ff$.
\end{proposition}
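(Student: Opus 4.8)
The plan is to reduce the general morphism $h\colon E\ra F$ to the already-handled case of a morphism emanating from an Auslander bundle, by resolving $E$ through Auslander bundles. First I would take a projective cover $P(E)\ra E$ in the Frobenius category $\vect\XX$, giving a distinguished exact sequence $0\ra \Omega E\ra P(E)\ra E\ra 0$; since $\Phi$ is exact (the Corollary after Proposition~\ref{prop:image-in-Sp}) and $\Phi$ kills $P(E)$ if and only if $P(E)$ has no persistent summand, this sequence by itself is not enough. Instead I would combine this with Lemma~\ref{lemma:exists-map-from-AB}: if $\rank E\geq 2$ there is an Auslander bundle $E_1$ and a map $u_1\colon E_1\ra E$ with $u_1\notin[\Ll]$. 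The strategy is to cover $E$ by a sum of Auslander bundles and line bundles, i.e. to build an epimorphism (in the distinguished-exact sense) $\pi\colon A\oplus P\ra E$ with $A$ a finite direct sum of Auslander bundles and $P\in\add\Ll$, such that $\pi$ becomes a $\Phi$-epimorphism. Applying $\Phi$ and using Proposition~\ref{prop:exact}, it then suffices to treat the composite $h\circ\pi$, whose restriction to each Auslander-bundle summand is covered by Lemma~\ref{lemma:zero-phi-of-map-from-AB} and whose restriction to the line-bundle summands factors through $\add\Ff$ by the preceding Lemma (the one producing $\eta\colon 0\ra P\ra P_0\ra P_1\ra 0$ with $P_0,P_1\in\add\Ff$).

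In more detail: given $h\colon E\ra F$ with $\Phi(h)=0$, I would first dispose of the case where $E$ is a line bundle (if $E\in\Ff$ there is nothing to prove; if $E\in\Pp$ then $\Phi(E)=\ulPp(-,E)\neq 0$ contains the identity, contradicting $\Phi(h)=0$ unless $h$ already factors through $\add\Ff$ — here one uses that $\mathrm{id}_E$ generates a projective summand detected by $\Phi$). For $\rank E\geq 2$, pick $u_1\colon E_1\ra E$ from Lemma~\ref{lemma:exists-map-from-AB}; the cokernel (in $\coh\XX$) of the composite $P(E_1)\ra E_1\ra E$ has smaller rank, or one argues by induction on $\rank E$ that $E$ receives a distinguished epimorphism from a finite sum $A\oplus P$ with $A\in\add\{\text{Auslander bundles}\}$ and $P\in\add\Ll$, where $\Phi$ of this map is still surjective. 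Composing with $h$ and decomposing over the summands: on each Auslander summand $E_i$ the map $E_i\ra E\up{h}F$ has $\Phi$-image zero (since $\Phi(h)=0$), so by Lemma~\ref{lemma:zero-phi-of-map-from-AB} it lies in $[\Ff]$; on each line-bundle summand $L$ the map $L\ra F$ factors through the $\add\Ff$-term $P_0$ of the sequence $\eta$ from the preceding Lemma, because property~(1) of that Lemma guarantees $\Hom{}{L'}{\eta}$ is exact for persistent $L'$ and hence the relevant lifting exists after checking the map dies in $\Phi$. Since $h\circ\pi$ factors through $\add\Ff$ and $\pi$ is a $\Phi$-epimorphism, a diagram chase (using that $\Phi$ is exact and that $\ker\Phi\supseteq[\Ff]$ is already known on the pieces) yields $h\in[\Ff]$.

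The main obstacle, and the step I expect to require the most care, is constructing the Auslander-bundle cover $A\oplus P\twoheadrightarrow E$ so that it is simultaneously a \emph{distinguished} epimorphism in $\vect\XX$ \emph{and} a $\Phi$-epimorphism, and controlling the induction: Lemma~\ref{lemma:exists-map-from-AB} only gives a single non-split map $u\colon E\ra X$ into any $X$ of rank $\geq 2$, not an epimorphism, so one must iterate and bound the process by rank while ensuring the accumulated map stays surjective after applying $\Phi$. A cleaner route may be to avoid building a global cover and instead argue by induction on $\rank E$ directly: write $h$ as a map out of $E$, use $u_1\colon E_1\ra E$ with $u_1\notin[\Ll]$, complete $u_1$ to a distinguished exact sequence $0\ra E_1\ra E\oplus(\text{line bundles})\ra E'\ra 0$ with $\rank E'<\rank E$ (using the Frobenius structure and that Auslander bundles are exceptional), apply $\Phi$, and push the vanishing of $\Phi(h)$ along this sequence to reduce to the Auslander-bundle case plus a strictly smaller instance handled by the inductive hypothesis. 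Either way, the genuinely new input is Lemma~\ref{lemma:zero-phi-of-map-from-AB}; everything else is assembling the exact-category bookkeeping around it.
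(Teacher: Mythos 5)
Your proposal has a genuine gap, and it sits exactly at the step you dismiss as ``a diagram chase'' at the end. Observe first that your whole reduction is circular in the following sense: since $\Phi(h)=0$ already means that $h f\in[\Ff]$ for \emph{every} map $f\colon L\ra E$ with $L$ persistent, the composite $h\pi$ lies in $[\Ff]$ for \emph{any} map $\pi\colon A\oplus P\ra E$ whatsoever (on persistent line-bundle summands by hypothesis, on fading summands trivially, and on Auslander-bundle summands by Lemma~\ref{lemma:zero-phi-of-map-from-AB}). So the Auslander-bundle cover buys you nothing new: the entire content of the proposition is the descent ``$h\pi\in[\Ff]$ for a distinguished epimorphism $\pi$ implies $h\in[\Ff]$,'' and this implication is false for a general ideal and a general epimorphism --- a factorization of $h\pi$ through some $Q\in\add\Ff$ lives on the cover and there is no a priori way to push it down to $E$. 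Neither exactness of $\Phi$ nor ``$\ker\Phi\supseteq[\Ff]$ on the pieces'' supplies such a mechanism. (Your secondary worry, that Lemma~\ref{lemma:exists-map-from-AB} only produces a single map from an Auslander bundle rather than an epimorphic cover, is also real, but it is the lesser problem.)

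The paper's proof supplies precisely the missing mechanism, and it does so without any Auslander-bundle cover. Taking a distinguished epimorphism $\pi\colon P\ra E$ with $P\in\add\Ll$, one has $h\pi\in[\Ff]$ as above; the point is then to use the tailor-made sequence $\eta\colon 0\ra P\up{\alpha}P_0\up{\beta}P_1\ra 0$ with $P_0,P_1\in\add\Ff$ from the preceding lemma. Property~(2) of that lemma (exactness of $\Hom{}{\eta}{L'}$ for fading $L'$) lets one extend the $\add\Ff$-factorization of $h\pi$ to a map $\ga\colon P_0\ra F$ with $\ga\alpha=h\pi$; hence $(h,-\ga)$ kills $P$ and $h$ factors through the cokernel $C$ of $(\pi,\alpha)^t\colon P\ra E\oplus P_0$. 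The morphism statement is thereby converted into an object statement: it remains to show $C\in\add\Ff$, which is done by checking $\Phi C=0$ via a $3\times3$ diagram and property~(1) of the same lemma, and then invoking the proposition that indecomposables killed by $\Phi$ are fading (this is where Lemmas~\ref{lemma:exists-map-from-AB} and~\ref{lemma:zero-phi-of-map-from-AB} actually enter, at the object level). If you want to salvage your outline, you must replace the final ``diagram chase'' by an argument of this kind: exhibit a concrete fading object through which $h$ factors, rather than hoping that membership of $h\pi$ in $[\Ff]$ descends along $\pi$.
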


\begin{proof}
  Let $P\up{\pi}E\ra 0$
  be a distinguished epimorphism with
  $P\in\add{\Ll}$. Since $\Phi h=0$, the composition $h\pi$ factors
  through an object of $\add{\Ff}$, and by (2) from the preceding lemma we
  obtain a commutative diagram
  $$\xy\xymatrixcolsep{2pc}\xymatrix@C18pt@R18pt{\eta\colon 0\ar @{->}[r] & P \ar
    @{->}[r]^-{\alpha} \ar @{->}[d]_-{h\pi} & P_0 \ar
    @{->}[r]^-{\beta} \ar @{-->}[dl]^{\ga}_-{\circlearrowright} &
    P_1 \ar @{->}[r] & 0\\
    & F & & } \endxy$$
  where $\eta$ is an exact sequence in $\coh\XX$
  with $P_0$, $P_1\in\add{\Ff}$. From this we get a commutative diagram
    $$\xy\xymatrixcolsep{2pc}\xymatrix@C18pt@R18pt{\mu\colon 0\ar @{->}[r] & P \ar
    @{->}[r]^-{(\pi,\alpha)^t} & E\oplus P_0 \ar
    @{->}[r]^-{(\si_1,\si_2)} \ar @{->}[d]_-{(h,-\ga)} & C
    \ar @{->}[r] \ar @{-->}[dl]^-{\delta}_-{\circlearrowright} & 0\\
    & & F & } \endxy$$
  in $\coh\XX$ whose row is exact. It suffices to show that the
  cokernel $C$ lies in $\add{\Ff}$. To prove this consider the following
  commutative diagram
  $$
  \xy\xymatrixcolsep{2pc}\xymatrix{ & 0 & 0 & 0 & \\
    \eta\colon 0\ar @{->}[r]& P \ar @{->}[r]^-{\alpha} \ar @{->}[u] &
    P_0 \ar @{->}[r]^-{\beta} \ar
    @{->}[u] & P_1 \ar @{->}[r] \ar @{->}[u] &0\\
    \mu\colon 0\ar @{->}[r]& P \ar
    @{->}[r]^-{\varepsilon=(\pi,\alpha)^t} \ar @{=}[u] & E\oplus P_0
    \ar @{->}[r]^-{\si=(\si_1,\si_2)} \ar @{->}[u]_-{(0,1)} &
    C \ar @{->}[r] \ar
    @{-->}[u]_-{p} &0\\
    0 \ar @{->}[r] & 0 \ar @{->}[u] \ar @{->}[r] & E \ar @{=}[r] \ar
    @{->}[u]_-{(1,0)^t} &
    E \ar @{->}[r] \ar @{->}[u] & 0 \\
    & 0 \ar @{->}[u] & 0 \ar @{->}[u] & 0 \ar @{->}[u] & } \endxy
  $$
  with exact rows and columns.
  For each persistent line bundle $L$, applying the functor
  $\Hom{}{L}{-}$ the first (compare part (1) of the preceding lemma)
  and the third row, and the first and the second column stay exact.
  It follows that also the third column stays exact implying that
  $\Hom{}{L}{\mu}$ is exact for each $L\in\Pp$, in particular
  $\Hom{}{L}{\si}$ is an epimorphism for $L\in\Pp$. We conclude that
  $\Phi\si\colon\ulPp(-,E\oplus P_0)\ra\ulPp(-,C)$ is
  an epimorphism. Since $\Phi\varepsilon=\Phi\pi$ is also an
  epimorphism, the composition
  $0=\Phi(\si\varepsilon)=\Phi(\si)\Phi(\varepsilon)$ is an
  epimorphism as well, yielding $\Phi C=0$, equivalently $C\in\add{\Ff}$.
\end{proof}

\subsection*{The functor $\Phi$ is full}
Our next lemma plays a key role in order to show that the functor
$\Phi\colon\vect\XX\ra \mmod{\ulPp}$ is full.

\begin{lemma}
  We assume that $\eta\colon 0\ra E' \up{\alpha} E \up{\beta} E'' \ra 0$
  is a distinguished exact sequence in $\vect\XX$. Then
  $\beta=\coker(\alpha)$ holds in $\vect\XX/[\Ff]$.
\end{lemma}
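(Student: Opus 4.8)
The plan is to verify the cokernel universal property for $\beta$ in the quotient $\vect\XX/[\Ff]$ by hand, extracting the required factorizations from the distinguished exactness of $\eta$ and obtaining uniqueness from the facts about $\Phi$ established above. First I would dispose of the formal points. Since $\eta$ is a distinguished exact sequence it is, in particular, exact in $\coh\XX$, so $\beta$ is the cokernel of $\alpha$ there; as its end term $E''$ lies in $\vect\XX$ and $\beta$ is epic in $\coh\XX$, it follows at once that $\beta$ is also the cokernel of $\alpha$ inside $\vect\XX$, i.e. every $w\colon E\ra Z$ in $\vect\XX$ with $w\alpha=0$ factors uniquely through $\beta$. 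Also $\beta\alpha=0$ holds already in $\vect\XX$, hence in $\vect\XX/[\Ff]$, so $\bar\beta\bar\alpha=0$.

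Next I would treat existence of the induced factorization. Suppose $g\colon E\ra Z$ is a morphism in $\vect\XX$ whose class $\bar g$ in $\vect\XX/[\Ff]$ satisfies $\bar g\bar\alpha=0$; thus $g\alpha\in[\Ff]$, say $g\alpha=vu$ with $u\colon E'\ra F$ and $v\colon F\ra Z$ for some $F\in\add\Ff$. Because $\eta$ is distinguished exact, the sequence $\Hom{}{\eta}{F}$ is exact, so $\alpha^{*}\colon\Hom{}{E}{F}\ra\Hom{}{E'}{F}$ is surjective and we may write $u=u'\alpha$ for some $u'\colon E\ra F$. Then $(g-vu')\alpha=0$ in $\vect\XX$, so by the previous paragraph there is $h\colon E''\ra Z$ in $\vect\XX$ with $h\beta=g-vu'$; since $vu'$ factors through $F\in\add\Ff$ we have $vu'\in[\Ff]$, whence $\bar h\bar\beta=\bar g$ in $\vect\XX/[\Ff]$, as desired.

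It remains to establish uniqueness of such an $h$ in $\vect\XX/[\Ff]$, equivalently that $\bar h\bar\beta=0$ forces $\bar h=0$. Here I would invoke the two central results already proved: if $h\beta\in[\Ff]$ then $\Phi(h\beta)=0$ by Proposition~\ref{prop:kernel}; by Proposition~\ref{prop:exact} the sequence $\Phi(\eta)$ is short exact in $\mmod\ulPp$, so $\Phi(\beta)$ is an epimorphism and therefore $\Phi(h)=0$; applying Proposition~\ref{prop:kernel} once more yields $h\in[\Ff]$, i.e. $\bar h=0$. I expect this uniqueness step to be the only genuine obstacle: it is exactly the point where one cannot argue formally inside the quotient category but must use that $[\Ff]$ is precisely the kernel of $\Phi$ together with the fact that $\Phi$ converts the distinguished exact $\eta$ into an honest short exact sequence of $\ulPp$-modules.
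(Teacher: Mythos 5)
Your proposal is correct and follows essentially the same route as the paper: existence of the factorization by lifting $u\colon E'\ra F$ along $\alpha$ via the distinguished exactness of $\Hom{}{\eta}{F}$, and uniqueness by combining $\ker\Phi=[\Ff]$ with the surjectivity of $\Phi(\beta)$ from Proposition~\ref{prop:exact}. (Only a cosmetic remark: for ``$h\beta\in[\Ff]$ implies $\Phi(h\beta)=0$'' you need the easy inclusion $[\Ff]\subseteq\ker\Phi$, not Proposition~\ref{prop:kernel}, which is the reverse inclusion and is what you correctly invoke at the very end.)
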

\begin{proof}
  (1) $\beta$ is an epimorphism in $\vect\XX/[\Ff]$: To this end let
  $f\colon E''\ra X$ be a morphism in $\vect\XX$ such that
  $f\beta\in\ker\Phi$. Then $(\Phi f)(\Phi\beta)=0$. By
  Proposition~\ref{prop:exact}, $\Phi\beta$ is an epimorphism, thus
  $\Phi f=0$, that is, $f\in\ker\Phi$, hence $f\in[\Ff]$ by Proposition~\ref{prop:kernel}.

  (2) Let $h\colon E\ra X$ be a morphism in $\vect\XX$ such that
  $h\alpha\in[\Ff]$.
  Hence there is $P\in\add{\Ff}$ such that $h\alpha=[E' \up{a} P\up{b}
  X]$. Since $\eta$ is distinguished exact there is a morphism
  $a'\colon E\ra P$ such that $a'\alpha =a$. We obtain $(h-ba')\alpha
  =0$. Since $\eta$ is exact, there is a morphism $h'\colon E''\ra X$
  with $h-ba'=h'\beta$, which leads to $h=h'\beta$ modulo
  $[\Ff]$.
\end{proof}

\begin{proposition} \label{prop:full}
  The functor $\Phi\colon\vect\XX\ra \mmod{\ulPp}$ is full, and
  induces a full embedding $\vect\XX/[\Ff]\incl \nilop{p}$.
\end{proposition}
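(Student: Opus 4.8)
The plan is to prove fullness of $\Phi$ by a lifting argument that mirrors the structure already used in the lemma on $\coker(\alpha)$: I want to resolve objects of $\vect\XX$ by projectives and push any morphism of $\ulPp$-modules down along the resolutions. Concretely, given $E, E' \in \vect\XX$ and a morphism $\varphi\colon \Phi(E)\to\Phi(E')$ in $\mmod{\ulPp}$, first fix distinguished epimorphisms $\pi\colon P\twoheadrightarrow E$ and $\pi'\colon P'\twoheadrightarrow E'$ with $P, P'\in\add\Ll$; by Proposition~\ref{prop:exact} the induced maps $\Phi(\pi)\colon\Phi(P)\to\Phi(E)$ and $\Phi(\pi')\colon\Phi(P')\to\Phi(E')$ are epimorphisms, and $\Phi(P), \Phi(P')$ are projective $\ulPp$-modules (they are finite direct sums of representable functors $\ulPp(-,L)$ with $L$ a line bundle, and by Lemma~B such $L\in\Pp$ give the projectives; the fading ones are sent to zero by $\Phi$, which only helps). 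Since $\Phi(P)$ is projective and $\Phi(\pi')$ is an epimorphism, the composite $\varphi\circ\Phi(\pi)$ lifts to a morphism $\psi\colon\Phi(P)\to\Phi(P')$ of $\ulPp$-modules with $\Phi(\pi')\circ\psi = \varphi\circ\Phi(\pi)$.

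The crux is then to realise $\psi$ itself as $\Phi$ of an honest bundle map. Because $\Phi$ is fully faithful on line bundles up to the ideal $[\Ff]$ — more precisely, $\Hom_{\ulPp}(\ulPp(-,L),\ulPp(-,L'))=\ulPp(L,L')=\Hom_\XX(L,L')/[\Ff](L,L')$ by Yoneda together with the definition of $\ulPp$ as the quotient category — any morphism $\Phi(P)\to\Phi(P')$ between these projectives is, componentwise, induced by a morphism $P\to P'$ in $\vect\XX$, well-defined modulo $[\Ff]$. Choose such a lift $g\colon P\to P'$ with $\Phi(g)=\psi$. Now I must produce the map $E\to E'$: I would like to descend $g$ through $\pi$ and $\pi'$. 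Set $E'_0=\ker\pi$; since $\Phi(\pi'g) = \Phi(\pi')\psi = \varphi\Phi(\pi) = \Phi(\text{``}\varphi\text{ composed with }\pi\text{''})$ and $\Phi(\pi)$ kills $E'_0$ (as $\Phi$ is a functor and $\pi$ annihilates the kernel — concretely $\Phi(E'_0\hookrightarrow P)$ composed with $\Phi(\pi)$ is zero), we get $\Phi(\pi' g\,\iota)=0$ where $\iota\colon E'_0\hookrightarrow P$; hence $\pi' g\,\iota\in\ker\Phi=[\Ff]$ by Proposition~\ref{prop:kernel}. So $\pi'g$ annihilates $\ker\pi$ modulo $[\Ff]$, and since $\pi$ is a cokernel of $\iota$ in $\vect\XX$ — hence, modulo $[\Ff]$, still has the relevant universal property, because $\mmod{\ulPp}$ is abelian and $\Phi$ is exact on distinguished sequences — there is $h\colon E\to E'$ with $h\pi = \pi' g$ in $\vect\XX/[\Ff]$. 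Applying $\Phi$ (which kills $[\Ff]$ by Proposition~\ref{prop:kernel}) gives $\Phi(h)\Phi(\pi)=\Phi(\pi')\psi=\varphi\Phi(\pi)$, and since $\Phi(\pi)$ is an epimorphism we conclude $\Phi(h)=\varphi$. This proves $\Phi$ full, and the induced functor $\vect\XX/[\Ff]\to\nilop{p}$ is then full; it is faithful precisely because $\ker\Phi=[\Ff]$ (Proposition~\ref{prop:kernel}), hence a full embedding. It remains to observe the image lands in $\nilop{p}$: by Proposition~\ref{prop:image-in-Sp} each $\Phi(E)$ is a monomorphism $U\to M$ of graded $k[x]/(x^p)$-modules, which is exactly the defining condition for $\nilop{p}$.

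The step I expect to be the main obstacle is the descent of $g\colon P\to P'$ to $h\colon E\to E'$, i.e.\ verifying that the relation $\pi'g\,\iota\in[\Ff]$ is genuinely enough to factor $\pi'g$ through $\pi$ in $\vect\XX/[\Ff]$. Honest cokernels in $\vect\XX$ need not remain cokernels in the quotient $\vect\XX/[\Ff]$ without an argument; the safe route is to work entirely inside $\mmod{\ulPp}$ using Proposition~\ref{prop:exact} (so that $\Phi(\pi)$ really is a cokernel of $\Phi(\iota)$ in the abelian category $\mmod{\ulPp}$), lift the resulting factorisation back across $\Phi$ on the source object $E$ using that $\Phi$ is already known to be dense-enough and that $\ker\Phi=[\Ff]$, and only then package the outcome as a morphism in $\vect\XX/[\Ff]$. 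One must also be a little careful that the lift $g$ of $\psi$ is only defined modulo $[\Ff]$ — but that ambiguity is harmless since everything downstream is computed in $\vect\XX/[\Ff]$ or after applying $\Phi$.
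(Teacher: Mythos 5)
Your argument is essentially the paper's own proof: lift the module map to projective covers, realise the lift between projectives as $\Phi(g)$ for an honest bundle map $g$ via Yoneda, and descend. The one step you flag as the main obstacle --- that $\pi'g\,\iota\in[\Ff]$ suffices to factor $\pi'g$ through $\pi$ in $\vect\XX/[\Ff]$ --- is precisely the statement of the lemma you cite at the outset: for a distinguished exact sequence $0\to E'_0\xrightarrow{\iota}P\xrightarrow{\pi}E\to0$ one has $\pi=\coker(\iota)$ in $\vect\XX/[\Ff]$, and part (2) of its proof handles exactly your situation (the factorisation $\pi'g\,\iota=ba$ through $\add\Ff$ extends along $\iota$ because the sequence is distinguished exact, whence $(\pi'g-ba')\iota=0$ and $\pi'g$ factors through $\pi$ on the nose modulo $[\Ff]$). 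So you should simply invoke that lemma rather than the justification ``$\mmod\ulPp$ is abelian and $\Phi$ is exact,'' which does not by itself transfer the universal property to $\vect\XX/[\Ff]$; likewise the alternative ``safe route'' in your last paragraph (lifting the factorisation back across $\Phi$ on $E$) is unnecessary and would skirt circularity, since it presupposes a form of the fullness being proved. With the lemma cited at that point, your proof is complete and coincides with the paper's.
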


\begin{proof}
  Let $h\colon\Phi E\ra\Phi F$ be a morphism in
  $\mmod{\ulPp}$. Consider projective covers in $\vect\XX$:
  \begin{gather*}
    0\lra E' \up{\alpha}P\up{\beta}E\lra 0,\\
    0\lra F' \up{\ga}Q\up{\delta}F\lra 0.\\
  \end{gather*}
  By Proposition~\ref{prop:exact} we get a commutative diagram with
  exact rows:
  $$
  \xy\xymatrixcolsep{2pc}\xymatrix@C20pt@R20pt{ 0\ar @{->}[r] & \Phi E' \ar
    @{->}[r]^-{\Phi\alpha} \ar @{->}[d]_-{h'} & \Phi P\ar
    @{->}[r]^-{\Phi\beta} \ar @{->}[d]_-{\bar{h}=\Phi u} & \Phi E\ar
    @{->}[r] \ar @{->}[d]^-{h} & 0\\ 0\ar @{->}[r] & \Phi F' \ar
    @{->}[r]^-{\Phi\ga} & \Phi Q\ar @{->}[r]^-{\Phi\delta} &\Phi F\ar
    @{->}[r] & 0. } \endxy
   $$
 We have $\Phi(\delta u\alpha)=\Phi(\delta)\Phi(u)\Phi(\alpha)=0$,
 hence $(\delta u)\alpha$ belongs to $[\Ff]$. By the preceding
 lemma there is a morphism $v\colon E\ra F$ with $v\beta=\delta
 u$ in $\vect\XX/[\Ff]$. Applying $\Phi$ we get $(h-\Phi v)\Phi\beta=0$. Since $\Phi\beta$
 is an epimorphism we get $h=\Phi(v)$.
\end{proof}

\subsection*{Reflecting exactness}
The next proposition turns out to be crucial in comparing the
exact structures of $\vect\XX$, $\vect\XX/[\Ff]$, $\mod{\ulPp}$ and
$\snilop{p}$.

\begin{proposition} \label{prop:reflect-exact}
  Let $\eta\colon 0 \ra E' \up{\alpha} E \up{\beta} E'' \ra 0$ be a
  sequence in $\vect\XX$ such that $\Phi(\eta)$ is exact in
  $\mmod{\ulPp}$. Modifying terms by adding suitable summands from
  $\add\Ff$, we can change $\eta$ to a distinguished exact sequence
  $\hat\eta$ in $\vect\XX$ such that $\Phi(\eta)$ and $\Phi(\hat\eta)$
  are isomorphic in $\mmod{\ulPp}$ and, accordingly, $\eta$ and
  $\hat\eta$ are isomorphic in $\vect\XX/[\Ff]$.
\end{proposition}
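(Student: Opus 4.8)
The plan is to ``realize'' the $\mmod\ulPp$-conflation $\Phi(\eta)$ over the Frobenius category $\vect\XX$, at the cost of padding each term by a summand from $\add\Ff$. The tools are the three facts already established: $\Phi$ is exact as a functor of Frobenius categories (the Corollary after Proposition~\ref{prop:exact}), $\Phi$ is full with $\ker\Phi=[\Ff]$ (Propositions~\ref{prop:full} and~\ref{prop:kernel}), and the lemma preceding Proposition~\ref{prop:kernel}, which for any $P\in\add\Ll$ supplies a two-term coresolution $0\to P\to P_0\to P_1\to0$ by objects of $\add\Ff$ that remains exact under $\Hom{}{L'}{-}$ for persistent $L'$ and under $\Hom{}{-}{L'}$ for fading $L'$. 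A preliminary reduction: since $\Phi(\beta)\Phi(\alpha)=0$ and $\ker\Phi=[\Ff]$, the composite $\beta\alpha$ factors as $E'\xrightarrow{a}F\xrightarrow{b}E''$ with $F\in\add\Ff$; replacing $(E,\alpha,\beta)$ by $(E\oplus F,(\alpha,a)^{t},(\beta,-b))$ — a modification by a fading summand that leaves $\Phi(\eta)$ unchanged up to isomorphism — one may assume $\beta\alpha=0$ in $\vect\XX$. Note also that, as $\nilop p$ is extension-closed in $\mmod\ulPp$ and all three terms of $\Phi(\eta)$ lie in $\nilop p$, the sequence $\Phi(\eta)$ is in fact a conflation of the Frobenius category $\nilop p$.

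Next I would choose a projective cover $\pi\colon P\to E''$ and an injective hull $j\colon E'\incl I$ in $\vect\XX$, giving distinguished exact sequences with $P,I\in\add\Ll$; applying $\Phi$ makes $\Phi P$ a projective and $\Phi I$ an injective object of $\nilop p$. Using that $\Phi(\eta)$ is a conflation of $\nilop p$, projectivity of $\Phi P$ lets one lift $\Phi(\pi)$ through the deflation $\Phi(\beta)$, and injectivity of $\Phi I$ lets one extend $\Phi(j)$ along the inflation $\Phi(\alpha)$; by fullness these lifts are $\Phi(u)$, $\Phi(v)$ for morphisms $u\colon P\to E$, $v\colon E\to I$ with $\beta u\equiv\pi$ and $v\alpha\equiv j$ modulo $[\Ff]$. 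Splicing $\eta$ together with these two distinguished exact sequences along $u$ and $v$ — after correcting $u$, $v$ on their fading parts so that the relevant squares commute exactly — produces a sequence $\hat\eta$ in $\vect\XX$ and a termwise morphism relating it to $\eta$. The crucial point is that one now trades the projective--injective correction terms $P$, $I$ and the syzygies $\Omega E''$, $\Omega^{-1}E'$ for their fading coresolutions from the lemma preceding Proposition~\ref{prop:kernel}: because those coresolutions preserve exactness of $\Hom{}{L'}{-}$ for persistent $L'$, the resulting $\hat\eta$ stays distinguished exact, while all added summands lie in $\add\Ff$. Applying $\Phi$ and the five lemma in $\mmod\ulPp$, the comparison morphism between $\hat\eta$ and $\eta$ becomes an isomorphism.

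Finally, since $\Phi$ carries each component of that comparison to an isomorphism and $\ker\Phi=[\Ff]$, fullness shows each component is an isomorphism in $\vect\XX/[\Ff]$. Writing every term by Krull--Schmidt as a bundle with no fading summand together with an object of $\add\Ff$, and using that an endomorphism in $[\Ff]$ of a bundle without fading summand is a non-unit (a unit would make a non-fading indecomposable a summand of an object of $\add\Ff$, impossible since $\add\Ff$ is closed under summands), one deduces that the non-fading parts of $\eta$ and $\hat\eta$ are already isomorphic in $\vect\XX$; absorbing the fading parts then exhibits $\hat\eta$ with terms $E'\oplus F'$, $E\oplus F$, $E''\oplus F''$ for suitable $F',F,F''\in\add\Ff$, distinguished exact, with $\Phi(\hat\eta)\cong\Phi(\eta)$ and $\eta\cong\hat\eta$ in $\vect\XX/[\Ff]$. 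The step I expect to be the main obstacle is the middle one: arranging the splicing so that \emph{every} correction term that appears can be replaced, via the fading coresolution lemma, by an object of $\add\Ff$ without spoiling distinguished exactness. This is the sensitive bookkeeping the authors flag, and it is precisely where the simultaneous compatibility of that lemma with $\Hom{}{\text{persistent}}{-}$ and with $\Hom{}{-}{\text{fading}}$ is indispensable.
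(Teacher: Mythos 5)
Your opening moves are sound and coincide with the paper's: lifting $\Phi(\pi)$ through $\Phi(\beta)$ via projectivity of $\Phi P$ and fullness of $\Phi$, so that $\beta u-\pi$ factors through some $P_f\in\add\Ff$, is exactly how the paper begins, and your preliminary reduction to $\beta\alpha=0$ is a reasonable way of making explicit an assumption the paper leaves implicit. The gap is in the middle, and it is the step you yourself flag. First, the ``splicing'' of $\eta$ with the projective cover and injective hull sequences along $u$ and $v$ is never actually constructed, so there is no concrete $\hat\eta$ to which the five lemma could be applied. Second, and more seriously, the justification you offer for distinguished exactness of $\hat\eta$ cannot work as stated: the coresolution lemma preceding Proposition~\ref{prop:kernel} only guarantees exactness of $\Hom{}{L'}{-}$ for \emph{persistent} $L'$ (and of $\Hom{}{-}{L'}$ for fading $L'$), whereas a distinguished exact sequence in $\vect\XX$ must stay exact under $\Hom{}{L}{-}$ for \emph{every} line bundle $L$; exactness against persistent line bundles only recovers the hypothesis that $\Phi(\hat\eta)$ is exact. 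Third, that lemma applies only to objects of $\add\Ll$, so it cannot be used to ``trade'' the syzygies $\Omega E''$ and $\Omega^{-1}E'$, which are not direct sums of line bundles in general.

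The paper's proof shows that none of this two-sided machinery is needed: a one-sided modification suffices. Writing $\beta\bar\pi-\pi=h\bar h$ with $\bar h\colon P\to P_f$, $h\colon P_f\to E''$ and $P_f\in\add\Ff$, the map $(\beta,-h)\colon E\oplus P_f\to E''$ satisfies $(\beta,-h)\circ(\bar\pi,\bar h)^t=\pi$; since $\pi$ is a distinguished epimorphism, so is $(\beta,-h)$. Its kernel $K$ gives a distinguished exact sequence $\hat\eta\colon 0\to K\to E\oplus P_f\to E''\to 0$, and because $\beta\alpha=0$ the inclusion $E\to E\oplus P_f$ together with the identity on $E''$ induces $\gamma\colon E'\to K$. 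Applying $\Phi$, both rows are exact, the middle vertical map is an isomorphism since $\Phi P_f=0$, so $\Phi(\gamma)$ is an isomorphism by the five lemma; as $\Phi$ induces a full embedding of $\vect\XX/[\Ff]$, the map $\gamma$ is already an isomorphism there. No injective hulls, no fading coresolutions, and no Krull--Schmidt bookkeeping are required. I would encourage you to rework your argument along these lines, keeping your lifting step and discarding the splicing construction.
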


\begin{proof}
  Let $P\up{\pi}E'' \ra 0$ in $\vect\XX$ be a projective cover. Since
  $\Phi\beta$ is an epimorphism in $\vect\XX/[\Ff]$ the
  morphism $\pi$ can be lifted to $E$. That is, there is a morphism
  $\bar{\pi}\colon P\ra E$ such that $\beta\bar{\pi}-\pi$ factors through a bundle
  $P_f$ which is a direct sum of fading line bundles, say
  $\beta\bar{\pi}-\pi=[P\up{\bar{h}}P_f \up{h}E'']$. It then follows
  that $E\oplus P_f\lup{(\beta,-h)}E''$ is a distinguished
  epimorphism. If $K$ denotes its kernel we obtain a commutative
  diagram of distinguished exact sequences in $\vect\XX$:
  $$
  \xy\xymatrixcolsep{2pc}\xymatrix@C18pt@R18pt{\hat{\eta}\colon 0\ar @{->}[r] &
    K\ar @{->}[r]^-{\hat{\alpha}} & E\oplus P_f
    \ar @{->}[r]^-{(\beta,-h)} & E'' \ar @{->}[r] & 0\\
    \eta\colon 0 \ar @{->}[r] & E' \ar @{->}[r]^-{\alpha} \ar
    @{-->}[u]_-{\ga} & E \ar @{->}[r]^-{\beta} \ar
    @{->}[u]_-{(1,0)^t} & E'' \ar @{->}[r] \ar @{=}[u] & 0. }\endxy$$
  Applying $\Phi$ to this diagram we obtain that $\Phi((1,0)^t)$ and
  hence $\Phi(\ga)$ are isomorphisms. Since
  $\Phi\colon\vect\XX/[\Ff]\ra\mod{\ulPp}$ is a full embedding, $\ga$
  becomes an isomorphism in the factor category $\vect\XX/[\Ff]$.
\end{proof}

\subsection*{The functor $\Phi$ is dense}

The next lemma will serve as an induction step to prove that the
functor $\Phi\colon\vect\XX\ra \nilop{p}$ is dense.

\begin{lemma} \label{lemma:simple}
  Let $L$ be a persistent line bundle and $\eta\colon 0\ra
  L(\vom)\up{\alpha} E\up{\beta} L \ra 0$ the corresponding
  almost split sequence in $\vect\XX$. Application of $\Phi$ yields an
  exact sequence
  \begin{equation}
    \label{eq:ses-simple-cokernel}
    0\lra \Phi(E) \lup{\Phi(\beta)}\Phi(L) \up{\pi} S \lra 0
  \end{equation}
  in $\mmod{\ulPp}$, where $S$ is a simple module (not necessarily
  lying in $\nilop{p}$).
\end{lemma}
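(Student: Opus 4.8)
The plan is to apply the functor $\Phi$ to the almost split sequence $\eta\colon 0\ra L(\vom)\ra E\ra L\ra 0$ and control the cokernel of $\Phi(\beta)$ by analysing $\Phi$ on line bundles directly. First I would note that since $L$ is persistent and $\eta$ is an almost split sequence — hence in particular distinguished exact in $\vect\XX$ — Proposition~\ref{prop:exact} applies, yielding an exact sequence $0\ra\Phi(L(\vom))\ra\Phi(E)\ra\Phi(L)\ra 0$ in $\mmod\ulPp$. So the issue is not the surjectivity or the middle exactness but rather identifying the \emph{image} of $\Phi(\beta)$ correctly when we instead consider the map $\Phi(\beta)\colon\Phi(E)\ra\Phi(L)$ as landing in $\Phi(L)$: we want to exhibit $\Phi(E)$ as the kernel of a map $\Phi(L)\ra S$ with $S$ simple. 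The natural candidate for $S$ is $\Phi$ applied to the simple sheaf supported at the point where $L$ and $L(\vom)$ differ, or more precisely the cokernel of $\Phi(\alpha)$ regarded inside $\Phi(L(\vom))$ is zero, but the cokernel of $\Phi$ of the \emph{composite} $L(\vom)\ra E\ra \text{something}$ is what carries the length-one information.

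The cleaner route: I would compute directly. Applying a shift by $\ZZ\vx_3$ we may assume $L=\Oo$ or $L=\Oo(\vx_2)$, and $E=E(L)$ has projective cover $P(E)=L(\vom)\oplus\bigoplus_{i=1}^3 L(-\vx_i)$ by \eqref{eq:proj-cover-ausl-bundle-L}. The key structural fact is Proposition~\ref{prop:image-in-Sp}: $\Phi(E)$ is finite-dimensional and finitely presented. So $\Phi(E)$, $\Phi(L)$ and the cokernel $S$ of $\Phi(\beta)$ are all finite-dimensional $\ulPp$-modules, and $S$ being a cokernel is automatically finitely generated. It remains to show $S$ has length one. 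For this I would use the description of $\ulPp$-modules as morphisms $U\ra M$ of $\ZZ$-graded $k[x]/(x^p)$-modules: $\Phi(L)=\ulPp(-,L)$ is, via Lemma~B, a cyclic projective concentrated appropriately (the representable functor at the vertex corresponding to $L$), and one reads off from the ladder quiver \eqref{eqn:ladder} that the cokernel of the map from $\Phi(E)$ is the simple module $S_v$ at that vertex $v$ — provided one checks that $\Phi(\beta)$ hits the radical of $\Phi(L)$, i.e.\ that the maps $x_2^*$ and $x_3^*$ into $\ulPp(L,L)\cong k$ are in the image. That surjectivity onto the radical is exactly the content of $\eta$ being almost split: any non-iso $L'\ra L$ from a persistent line bundle $L'$ factors through $\beta$, and the radical of $\Phi(L)$ at the relevant neighbouring vertices is generated by such morphisms (powers of $x_2,x_3$ by Lemma~B).

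Concretely the steps in order are: (i) reduce to $L=\Oo$ or $L=\Oo(\vx_2)$ by an $\vx_3$-shift, using Theorem~C's compatibility of $\Phi$ with shifts; (ii) invoke Proposition~\ref{prop:exact} to get that $0\ra\Phi(E)\ra\Phi(L)$ is exact with $\Phi(\beta)$ its first map, so $S:=\coker\Phi(\beta)$ sits in the stated sequence and the only thing to prove is $\mathrm{length}(S)=1$; (iii) identify $\Phi(L)$ explicitly as the representable $\ulPp$-module at the vertex $v$ of the $p$-ladder corresponding to $L$, whose top is the simple $S_v$; (iv) show $\mathrm{im}\,\Phi(\beta)\supseteq\mathrm{rad}\,\Phi(L)$, hence $S\cong S_v$. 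Step (iv) is the crux: one argues that for every persistent line bundle $L'$ with a non-isomorphism $g\colon L'\ra L$, the almost split property gives a factorization $g=\beta h$, so $\Phi(g)=\Phi(\beta)\Phi(h)\in\mathrm{im}\,\Phi(\beta)$; and by Lemma~B the radical of $\End(\Phi(L))$-module $\Phi(L)$ evaluated at neighbouring vertices is spanned by such non-isomorphisms (monomials in $x_2,x_3$), which generate $\mathrm{rad}\,\Phi(L)$ as a $\ulPp$-module.

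The main obstacle I anticipate is step (iv), specifically making precise that $\mathrm{im}\,\Phi(\beta)$ is \emph{exactly} the radical and not something smaller — one must rule out that $\Phi(\beta)$ has cokernel of length $\geq 2$, which would happen if some generator of $\mathrm{rad}\,\Phi(L)$ at a vertex adjacent to $v$ in the ladder failed to lift through $\beta$. Here the subtlety is that $\beta$ is almost split only with respect to \emph{all} of $\vect\XX$, whereas we need liftability of morphisms from persistent line bundles; but any such morphism $L'\ra L$ is a non-isomorphism (since $L'\not\cong L$ when it sits at a different ladder vertex, or when it is $L$ itself the relevant radical contribution $x_2^{p}=0$ vanishes by Lemma~B), so the almost split property does apply. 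One also has to confirm $S$ need not lie in $\nilop p$ — indeed it is a simple supported at a single vertex, viewed as the morphism $0\ra k$ or $k\ra 0$ of graded $k[x]/(x^p)$-modules, which is a monomorphism only in one of the two cases — but this is exactly the parenthetical caveat in the statement and requires no further argument. \qed
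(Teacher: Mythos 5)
There is a genuine gap, and it sits at the very first step. You assert that the almost split sequence $\eta\colon 0\ra L(\vom)\ra E\up{\beta} L\ra 0$ is ``in particular distinguished exact'' and invoke Proposition~\ref{prop:exact}. This is false: a sequence is distinguished exact precisely when $\Hom{}{L'}{\eta}$ is exact for every line bundle $L'$, and taking $L'=L$ this would force $\mathrm{id}_L$ to lift through $\beta$, i.e.\ the sequence would split --- which an almost split sequence never does. (Equivalently: in the Frobenius structure on $\vect\XX$ the end term $L$ is projective, so a distinguished exact sequence ending in $L$ splits.) Worse, your premise is internally inconsistent with the statement you are proving: if Proposition~\ref{prop:exact} did apply, it would give $0\ra\Phi(L(\vom))\ra\Phi(E)\ra\Phi(L)\ra 0$ exact with $\Phi(L(\vom))=0$ (since $L(\vom)$ is fading), so $\Phi(\beta)$ would be an \emph{isomorphism} and the cokernel $S$ would be zero, not simple. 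The whole content of the lemma is that $\Phi(\beta)$ is injective but not surjective, and neither half comes for free.

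The injectivity of $\Phi(\beta)$ is the part your proposal never actually establishes, and it is where the paper does real work: given a persistent $L_1$ and $f\colon L_1\ra E$ with $\Phi(\beta f)=0$, one uses $\ker\Phi=[\Ff]$ (Proposition~\ref{prop:kernel}) to factor $\beta f$ as $L_1\up{a}P\up{b}L$ with $P\in\add\Ff$, lifts the radical morphism $b$ through $\beta$ by the almost split property, deduces that $f-\bar{b}a$ factors through $L(\vom)$, and concludes $f\in[\Ff]$ because $L(\vom)$ is fading (Lemma~\ref{lemma:cyclic}). Your treatment of the cokernel, by contrast, is essentially the paper's: every non-isomorphism $L'\ra L$ from a persistent line bundle lifts through $\beta$, so the cokernel is supported at $L$ and is a quotient of the simple top of the local projective $\Phi(L)$. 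Even there you should add one sentence ruling out that the cokernel is zero: $\Phi(\beta)$ is a radical morphism between indecomposables with local endomorphism rings (using that $\Phi$ is a full embedding on $\vect\XX/[\Ff]$ and $E$ is indecomposable), and a radical map into an indecomposable projective cannot be surjective. As written, the proposal cannot be repaired without importing the injectivity argument, so it does not constitute a proof.
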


\begin{proof}
  By assumption $\XX$ has exactly three weights, therefore the
  Auslander bundle $E$ is indecomposable and hence
  $\Phi(\beta)\colon\Phi(E)\ra\Phi(L)$ is not an isomorphism since
  $\Phi$ induces a full embedding $\vect\XX/[\Ff]\incl \mod\ulPp$. The
  modules $\Phi(E)$ and $\Phi(L)$ have local endomorphism rings;
  moreover, $\Phi(L)$ is indecomposable projective. Denote by
  $\pi\colon\Phi(L)\ra S$ the natural projection on the simple top.
  Since the mapping $\Phi(\beta)$ belongs to the radical of
  $\mmod\ulPp$ we obtain $\pi\beta=0$.

  We claim that the map $\Phi(\beta)\colon\Phi(E)\ra \Phi(L)$ is
  injective. Indeed let $L_1$ be a persistent line bundle and $f\colon
  L_1\ra E$ such that $\Phi(\beta f)=0$. This yields a factorization
  $\beta f=[L_1\up{a}P\up{b}L]$ with $P$ from $\add\Ff$.  As a radical
  morphism $b$ then lifts via $\beta$, thus $b=\beta \bar{b}$ for some
  morphism $\bar{b}\colon P\ra E$. We obtain $\beta(f-\bar{b}a)=0$
  such that $f-\bar{b}a$ factors (via $\alpha$) over $L(\omega)$.
  Since $L\in\Pp$, equation~(\ref{eqn:cyclic}) from
  Lemma~\ref{lemma:cyclic} shows that $L(\vom)$ is fading. It follows
  that $f$ belongs to $[\Ff]$, proving the claim.

  By the preceding argument we obtain an exact sequence $0\ra
  \Phi(E)\ra \Phi(L)\ra C \ra 0$. We claim that the cokernel term $C$
  is a simple $\ulPp$-module. We first show that $C$ --- viewed as a
  representation of $\ulPp$ --- has support $\set{L}$, and hence is
  semisimple. For each persistent line bundle $L_1$, not isomorphic to
  $L$, each morphism $\ga\colon\Phi(L_1)\ra C$ lifts by projectivity
  of $\Phi(L_1)$ to a morphism $\Phi(u)\colon\Phi(L_1)\ra\Phi(L)$.
  Since $\eta$ is almost split the non-isomorphism $u\colon L_1\ra L$
  lifts via $\beta$, then implying that $\ga=0$. We have shown that
  $C\iso S^n$ where $S=S_L$ denotes the simple module concentrated in
  $L$. Moreover, $n\geq1$ since $C\neq0$. As an indecomposable
  projective module $\Phi(L)$ is local, and we conclude that $n=1$,
  implying that $C$ is simple.
\end{proof}

\begin{proposition} \label{prop:dense}
For each module $M$ in $\nilop{p}$ there exists a bundle $X$ such that
$\Phi(X)$ is isomorphic to $M$.
\end{proposition}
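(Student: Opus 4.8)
The plan is to prove density of $\Phi\colon\vect\XX\ra\nilop{p}$ by induction on the dimension of the target module $M\in\nilop{p}$. The base case is $M=0$, realized by $E=0$, and more generally the indecomposable projectives $\Phi(L)$ for $L$ a persistent line bundle are hit by construction, so the induction really starts from these. For the inductive step, given a non-projective $M$ in $\nilop{p}$, I would pick a simple submodule $S$ of $M$ sitting on top of the projective cover picture: concretely, $M$ being finite-dimensional over $\ulPp$ and lying in $\nilop{p}$, one can choose a persistent line bundle $L$ and a surjection onto the simple top, and then use Lemma~\ref{lemma:simple}, which says that applying $\Phi$ to the almost split sequence $0\ra L(\vom)\ra E(L)\ra L\ra 0$ produces a short exact sequence $0\ra \Phi(E(L))\ra \Phi(L)\ra S\ra 0$ with $S$ simple. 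The idea is to realize $M$ as (up to the equivalence) an extension or a cokernel built from a smaller module $M'$ with $\Phi(E')\iso M'$ and a copy of $\Phi(L)$, modding out the simple $S$ appropriately.

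More precisely, I would argue as follows. Since $M\in\nilop{p}$ is not semisimple-free we may choose a projective cover $p\colon\Phi(L_1)\oplus\cdots\oplus\Phi(L_r)\twoheadrightarrow M$ in $\mod\ulPp$ with all $L_i$ persistent line bundles; the kernel $N$ is a submodule of a projective, hence again an object whose lift we would like to control. But $N$ need not be in $\nilop{p}$, so a direct induction on $N$ is awkward. Instead I would single out one simple composition factor $S=S_L$ of the top of $M$ and look at the pushout/pullback along the map $\Phi(L)\twoheadrightarrow S$ from Lemma~\ref{lemma:simple}: forming the pullback of $M\twoheadleftarrow M$-cover and the surjection $\Phi(L)\twoheadrightarrow S$, or dually, realize $M$ as the cokernel of a map $\Phi(E(L))\to M''$ for a module $M''$ of strictly smaller dimension with a known lift $E''$. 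Then the map $\Phi(E(L))\to M''\iso\Phi(E'')$ comes, by fullness of $\Phi$ (Proposition~\ref{prop:full}), from a morphism $g\colon E(L)\ra E''$ in $\vect\XX/[\Ff]$, and I would set $E:=\coker(g)$ computed in the Frobenius category $\vect\XX/[\Ff]=\nilop{p}$ — equivalently, take the cokernel of a representative distinguished-exact-type map after adjusting by $\add\Ff$ summands. Applying $\Phi$ and using Proposition~\ref{prop:exact} (exactness of $\Phi$ on distinguished exact sequences) together with Proposition~\ref{prop:reflect-exact} (reflecting exactness) one gets $\Phi(E)\iso M$.

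The step I expect to be the main obstacle is the bookkeeping that guarantees the inductive quantity actually decreases and that the cokernel construction stays inside $\nilop{p}$ rather than drifting into $\mod\ulPp\setminus\nilop{p}$. The subtlety is that $\nilop{p}$ consists of the \emph{monomorphisms} $U\ra N$ of graded $k[x]/(x^p)$-modules, so when I mod out by the image of $\Phi(E(L))$ I must check that the resulting representation is again such a monomorphism; this is where the precise shape of $\Phi(E(L))$ from Lemma~\ref{lemma:simple} and the two admissible four-element support patterns (the two pictures drawn before Proposition~\ref{prop:kernel}) enter, controlling exactly which simple module $S$ can appear and ensuring the cokernel of $\Phi(\beta)\colon\Phi(E(L))\hookrightarrow\Phi(L)$ behaves correctly. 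Concretely I would verify that every $M\in\nilop{p}$ with $M\neq0$ admits a surjection onto some $S_L$ with $L$ persistent such that the kernel, suitably interpreted, is again in $\nilop{p}$ and has smaller length; granting this, the induction closes. A clean way to organize the decrease is to induct on $\dim_k M$ (finite by Proposition~\ref{prop:image-in-Sp}-type finiteness), noting that $\dim_k\Phi(L)-\dim_k\Phi(E(L))=\dim_k S=1$, so splicing in one Auslander bundle changes the dimension in a controlled, strictly monotone way.
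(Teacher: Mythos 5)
Your overall strategy is the paper's: induct on $\dim_k M$, peel off a simple quotient $S$ of $M$, use Lemma~\ref{lemma:simple} to present $S$ as the cokernel of $\Phi(\beta)\colon\Phi(E(L))\to\Phi(L)$, split the resulting pullback $\overline M$ using projectivity of $\Phi(L)$, and thereby write $M$ as the cokernel of an injection $\Phi(E(L))\to\Phi(L)\oplus\Phi(F')$ with $M'=\Phi(F')$ supplied by induction. One worry you raise is vacuous: the kernel $M'$ of $M\twoheadrightarrow S$ is a submodule of $M\in\nilop{p}$ and hence automatically again a monomorphism of graded $k[x]/(x^p)$-modules; it is only the simple quotient $S$ itself that may fail to lie in $\nilop{p}$, which is harmless.

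The genuine gap is the final step. You propose to set $E:=\mathrm{coker}(g)$ ``computed in the Frobenius category $\vect\XX/[\Ff]$'' and to conclude via Propositions~\ref{prop:exact} and~\ref{prop:reflect-exact}. But $\vect\XX/[\Ff]$ is exact, not abelian: such a cokernel does not exist in general, and the cokernel of the corresponding map in $\coh\XX$ need not be a vector bundle. Proposition~\ref{prop:reflect-exact} cannot close this hole, because it takes as \emph{input} a three-term sequence in $\vect\XX$ whose image under $\Phi$ is exact --- and the third term is precisely the object you still have to construct. The paper's missing ingredient is the injective hull of the Auslander bundle in the Frobenius structure, $0\to E\to L\oplus\bigoplus_{i=1}^3 L(\bx_i)\to E(\vx_1)\to 0$ with middle term in $\add\Ll$, which supplies a distinguished exact sequence whose cokernel \emph{is} a vector bundle. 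One then checks what the added summands contribute after applying $\Phi$: if $L$ lies on the upper bar all three $L(\bx_i)$ are fading, they die under $\Phi$, and $\Phi(E(\vx_1))\iso M$ falls out by comparison with your cokernel presentation; if $L$ lies on the lower bar, $L(\bx_1)$ is persistent, and one needs the extra argument that the sequence $0\to\Phi(L(\bx_1))\to\Phi(C)\to M\to 0$ splits because $L(\bx_1)$ is injective in $\nilop{p}$, whence Krull--Schmidt exhibits $M$ as a direct summand of $\Phi(C)$ and hence as $\Phi$ of a summand of $C$. This upper/lower bar dichotomy --- not the two four-element support patterns you cite, which concern projective covers of higher-rank bundles --- is where the real work sits, and it is absent from your argument.
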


\begin{proof}
We argue by induction on the (finite) dimension $n$ of $M$. If $n=0$,
the assertion is evident. So assume that $n>0$. Then we obtain an
exact sequence $0\ra M' \ra M \ra S \ra 0$ in $\mmod\ulPp$, where $S$
is simple and $M'$ belongs to $\nilop{p}$. Invoking
Lemma~\ref{lemma:simple} we obtain an Auslander bundle $E=E(L)$ and a commutative diagram in
$\mod\ulPp$ with exact rows and columns
$$
\xy\xymatrixcolsep{1.5pc}\xymatrix@C18pt@R18pt{ & & 0\ar @{->}[d] & 0\ar @{->}[d] & \\
  & & \Phi E \ar @{=}[r] \ar @{->}[d] & \Phi E \ar
  @{->}[d]^-{\Phi\beta} & \\ \mu\colon 0\ar @{->}[r]& M' \ar
  @{->}[r] \ar @{=}[d]& \overline{M} \ar @{->}[r] \ar @{->}[d] &
  \Phi L
  \ar @{->}[r] \ar @{->}[d] &0\\
  \ \ \ \ \ 0\ar @{->}[r]& M' \ar @{->}[r] & M \ar @{->}[r] \ar
  @{->}[d] & S \ar
  @{->}[r] \ar @{->}[d] &0 \\
  & & 0 & 0 & } \endxy
$$
Since $\Phi L$ is projective the sequence $\mu$ splits yielding
$\overline{M}=M'\oplus\Phi(L)$. By induction $M'$ belongs to the image of
$\Phi$, say $M'=\Phi(F')$. Summarizing we
obtain an exact sequence
\begin{equation}
  \label{eq:ses-zu-M}
0\ra \Phi E\lup{(\Phi\beta,\Phi u')^t} \Phi L\oplus\Phi F' \lra M \ra 0
\end{equation}
in $\mmod\ulPp$. We put $\bx_i=\vx_i+\vom$ and form in $\vect\XX$ the injective hull
$$0\ra E\up{a}L\oplus\bigoplus_{i=1}^3 L(\bx_i)\lra E(\vx_1)\ra 0$$
of
$E$ (compare~\eqref{eq:proj-cover-ausl-bundle-L}), where
$a=(\beta,\kappa_1,\kappa_2,\kappa_3)^t$, with $\beta$ from the almost
split sequence~\eqref{eq:almost-split-auslander} and the $\kappa_i$
like in~\eqref{eq:ses}. We obtain in $\coh\XX$
the exact sequence $$\ga\colon 0\ra
E\lup{(\beta,(\bx_i),u')^t}L\oplus\bigoplus_{i=1}^3 L(\bx_i)\oplus F'\lra
C\ra 0,$$
which is distinguished exact in $\vect\XX$.  To prove this we simplify notation, and write $\ga$ as the exact sequence $0\ra E \up{u} F \up{v}C \ra 0$. By construction of $\ga$ each morphism of $E$ into a line bundle $L$ extends to $F$, hence the sequence $\Hom{}{\ga}{L}$ is exact. We show that $C$ is a vector bundle. Let $C_0$ denote the torsion part of $C$, then the natural morphism $n:C\ra C/C_0$ induces an isomorphism $\Hom{}{C/C_0}{L}\ra \Hom{}{C}{L}$ for each line bundle $L$. This implies that the sequence $\bar\ga: 0\ra E\up{u}F \up{n\circ v}C/C_0\ra 0$ also has the property that the sequence $\Hom{}{\bar\ga}{L}$ is exact for each line bundle $L$. Additionally $\bar\ga$ consists of vector bundles, which implies that  $\bar\ga$ is distinguished exact, in particular exact in $\coh\XX$. Comparison of $\ga$ and $\bar\ga$ now shows that $C$ and $C/C_0$ are isomorphic, hence $C$ is a vector bundle and $\ga$ is distinguished exact as claimed. There are
two cases:

\emph{1.\ case.} $L$ belongs to the upper bar. Then all line bundles
$L(\bx_i)$ are fading ($i=1,2,3$), and $$\Phi\ga\colon 0\ra \Phi
E\lup{(\Phi\beta,\Phi u')^t}\Phi L\oplus\Phi
F'\ra\Phi\bigl(C\bigr)\ra 0$$
is exact. Comparing this
with~\eqref{eq:ses-zu-M} we obtain $\Phi\bigl(C\bigr)\simeq M$.

\emph{2.\ case.} $L$ belongs to the lower bar. Then $L(\bx_1)$ is
persistent, and $L(\bx_2)$, $L(\bx_3)$ are fading. We obtain the
diagram
\small$$
\xymatrix@C40pt@R24pt{
& & 0 & 0 & \\
  0 \ar @{->}[r] & \Phi E \ar @{=}[d] \ar @{->}[r]^-{(\Phi\beta,\Phi u')^t} &
  \Phi(L)\oplus\Phi(F') \ar @{->}[r] \ar @{->}[u] & M \ar @{->}[u] \ar
  @{->}[r]  & 0 \\ 0\ar
  @{->}[r]& \Phi E \ar @{->}[r]^-{(\Phi\beta,\Phi\bx_1,\Phi u')^t} &
  \Phi(L)\oplus\Phi(L(\bx_1))\oplus\Phi(F') \ar @{->}[u]_-{proj.}
  \ar @{->}[r] & \Phi(C)
  \ar @{->}[r] \ar @{->}[u] & 0 \\
   &  & \Phi(L(\bx_1)) \ar @{->}[u] \ar @{=}[r] &
   \Phi(L(\bx_1)) \ar @{->}[u] &  \\
  & & 0 \ar @{->}[u] & 0 \ar @{->}[u] & }
$$\normalsize
The sequence $$0\lra \Phi L(\bx_1)\lra\Phi C\lra M\lra 0$$
is exact with all
terms lying in $\nilop{p}$. This sequence splits since $\Phi L(\bx_1)$ is
injective in $\nilop{p}$. We get $\Phi C=M\oplus \Phi L(\bx_1)$. Write
$C=\bigoplus_{i=1}^n C_i$ with all $C_i \in\vect\XX$ indecomposable.
Since $\Phi$ is full the $\Phi C_i \neq 0$ have local endomorphism
rings. Because the category $\snilop{p}$ is Krull-Schmidt, it follows that $M$ is the direct sum of some of the $\Phi
C_i$, hence $M$ lies in the image of $\Phi$.
\end{proof}

For later applications we need a related result:
\begin{proposition} \label{prop:simple:Auslander}
  Let $L$ be a persistent line bundle from the upper bar and let $S_L$
  be the simple right $\ulPp$-module concentrated in $L$. Then $S_L$
  belongs to $\nilop{p}$ and has the form
  $\Phi\bigl(E(L)(\vx_1)\bigr)$, where $E(L)$ denotes the Auslander
  bundle attached to $L$.

Moreover, each simple $\ulPp$-module belonging to $\nilop{p}$ has the
above form.
\end{proposition}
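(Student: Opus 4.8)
The plan is to produce the expression $S_L = \Phi\bigl(E(L)(\vx_1)\bigr)$ essentially by reading off Lemma~\ref{lemma:simple} together with the sequences~\eqref{eq:almost-split-auslander} and~\eqref{eq:ses}, and then to pin down the converse using Proposition~\ref{prop:dense}. First I would fix a persistent line bundle $L$ from the \emph{upper} bar and consider the almost split sequence $0\ra L(\vom)\up{\alpha}E(L)\up{\beta}L\ra 0$. Since $L$ is persistent and from the upper bar, by Lemma~\ref{lemma:cyclic} (the $6$-periodic pattern) the bundle $L(\vom)$ is fading; hence applying $\Phi$ kills $L(\vom)$ and by Lemma~\ref{lemma:simple} (whose proof I can invoke verbatim) one gets a short exact sequence $0\ra\Phi(E(L))\up{\Phi\beta}\Phi(L)\up{\pi}S_L\ra 0$ in $\mmod\ulPp$ with $S_L$ the simple module concentrated at $L$. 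The point, not yet recorded, is that $S_L$ actually lies in $\nilop{p}$, i.e.\ that $S_L$, viewed as a map $U\ra M$ of $\ZZ$-graded $k[x]/(x^p)$-modules, is monomorphic. Concretely $S_L$ is the representation with a single copy of $k$ sitting at the vertex $L$ on the upper bar and zero elsewhere; since $L$ is on the upper bar, the downward arrow $y$ out of that vertex (i.e.\ $x_2\colon L\ra L(\vx_2)$, landing on the lower bar) acts as the zero map \emph{out of} a zero space, so the component map $U\ra M$ of $S_L$ is $0\ra k$, which is trivially a monomorphism. (If $L$ were on the lower bar the single copy of $k$ would sit in the $U$-part with no room in $M$, and $S_L$ would fail to be a monomorphism — this is exactly why the upper/lower distinction matters.) Hence $S_L\in\nilop{p}$.

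Next I would identify the bundle realizing $S_L$. We have the exact sequence $0\ra\Phi(E(L))\ra\Phi(L)\ra S_L\ra 0$; I want to replace $\Phi(L)$ by something whose $\Phi$-image, modulo $\add\Ff$, is $\Phi(L)$ but which fits into a distinguished exact sequence of bundles with cokernel $E(L)(\vx_1)$. The natural candidate comes from the injective hull of $E(L)$ used in the proof of Proposition~\ref{prop:dense}: there is a distinguished exact sequence $0\ra E(L)\up{a} L\oplus\bigoplus_{i=1}^3 L(\bx_i)\ra E(L)(\vx_1)\ra 0$ with $\bx_i=\vx_i+\vom$, where $a=(\beta,\kappa_1,\kappa_2,\kappa_3)^t$. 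Since $L$ lies on the upper bar, Table~\ref{tab:persistent-summands} (read at the relevant row) shows that all three line bundles $L(\bx_1)$, $L(\bx_2)$, $L(\bx_3)$ are fading; this is precisely the case ``$L$ belongs to the upper bar'' treated in the proof of Proposition~\ref{prop:dense}. Applying the exact functor $\Phi$ and using $\Phi(L(\bx_i))=0$, we obtain the exact sequence $0\ra\Phi(E(L))\up{\Phi\beta}\Phi(L)\ra\Phi\bigl(E(L)(\vx_1)\bigr)\ra 0$. Comparing with the sequence from Lemma~\ref{lemma:simple} and using that $\Phi\beta$ is the same map in both, we conclude $\Phi\bigl(E(L)(\vx_1)\bigr)\iso S_L$, as desired.

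For the final ``moreover'' clause I would argue as follows. Let $S$ be any simple $\ulPp$-module lying in $\nilop{p}$. As a representation of the ladder, $S$ is concentrated at a single vertex, i.e.\ $S=S_{L'}$ for some line bundle $L'$ in the ladder; I must show $L'$ is persistent and on the upper bar. If $L'$ were on the lower bar, then $S_{L'}$ written as a map $U\ra M$ has its single $k$ in the $U$-slot (the lower bar vertices supply the domain side of $U\ra M$ together with the action of $x_3$, while the map to $M$ comes from $x_2$ upward, which is zero here) — hence $S_{L'}$ is the map $k\ra 0$, not a monomorphism, contradicting $S_{L'}\in\nilop{p}$. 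So $L'$ is on the upper bar, hence persistent, and by the first part $S=S_{L'}=\Phi\bigl(E(L')(\vx_1)\bigr)$. The main obstacle I anticipate is the careful bookkeeping in the two places where the upper/lower-bar dichotomy enters: first verifying that $S_L$ is genuinely monomorphic (hence lies in $\nilop{p}$) by correctly identifying which of the two graded $k[x]/(x^p)$-modules the vertex $L$ contributes to, and second checking from Table~\ref{tab:persistent-summands} that for $L$ on the upper bar \emph{all} of $L(\bx_1),L(\bx_2),L(\bx_3)$ are fading, so that the injective-hull sequence becomes, after applying $\Phi$, exactly the defining sequence of $S_L$. Both are finite checks against the $6$-periodic pattern of Lemma~\ref{lemma:cyclic} and the table, but they must be done with the correct normalization of $L$ within its $\langle\si_3\rangle$-orbit.
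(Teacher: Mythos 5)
Your proposal is correct and follows essentially the same route as the paper, whose proof simply cites the first case of the proof of Proposition~\ref{prop:dense} with $M=S$ and $F'=0$ (i.e.\ Lemma~\ref{lemma:simple} combined with the injective-hull sequence $0\ra E(L)\ra L\oplus\bigoplus_{i=1}^3 L(\bx_i)\ra E(L)(\vx_1)\ra 0$ and the fact that all $L(\bx_i)$ are fading when $L$ is on the upper bar). Your explicit checks that $S_L$ is a monomorphism object and that no lower-bar simple can lie in $\nilop{p}$ correctly make explicit what the paper leaves implicit; the only quibble is that the fading of the $L(\bx_i)=L(\vx_i+\vom)$ should be verified against the $6$-periodic pattern of Lemma~\ref{lemma:cyclic} rather than Table~\ref{tab:persistent-summands}, which records the projective-cover summands $L(-\vx_i)$, not the injective-hull ones.
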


\begin{proof}
  This follows from the proof (1.~case) of
  Proposition~\ref{prop:dense} (with $M=S$ and hence $F'=0$).
\end{proof}

\subsection*{Frobenius structure and proof of Theorems~A and~C}

Define a sequence $0\ra E'\up{\alpha}E\up{\beta}E''\ra 0$ in
$\vect\XX/[\Ff]$ to be distinguished exact if it is isomorphic to a
sequence which is induced by a distinguished exact sequence in
$\vect\XX$.

We will prove now Theorems~A and~C. Part~(1) from Theorem~A was
already shown before, part~(3) is trivial.

By Propositions~\ref{prop:image-in-Sp}, \ref{prop:kernel},
\ref{prop:full} and~\ref{prop:dense} the assignment $E\mapsto
\ulPp(-,E)$ induces an equivalence of categories
$\Phi\colon\ffrac{\vect\XX}{[\Ff]}\lra \nilop{p}$. It follows from
Propositions~\ref{prop:exact} and~\ref{prop:reflect-exact} that a
sequence $\eta\colon 0\ra E'\up{\alpha}E\up{\beta}E''\ra 0$ in
$\vect\XX/[\Ff]$ is distinguished exact if and only if $\Phi(\eta)$ is
exact in $\nilop{p}$. It follows (via $\Phi$) that the distinguished
exact sequences give $\vect\XX/[\Ff]$ the structure of a Frobenius
category, and moreover, such that the indecomposable
projective-injective objects are given by the objects of $\ulPp$. This
proves part~(2) from Theorem~A. Hence
$\Phi\colon\ffrac{\vect\XX}{[\Ff]}\lra \nilop{p}$ is even an
equivalence of Frobenius categories, which shows the first statement
of Theorem~C. (We note that it is possible to establish directly that the distinguished exact
sequence define on $\svect\XX/[\Ff]$ the structure of a Frobenius category, without
involving the functor $\Phi$.) The second statement of Theorem~C is an immediate
consequence of the first together with Theorem~A~(3). The last
assertion of Theorem~C on the shift-commutation of $\Phi$ follows by
construction.

\section{Applications} \label{sect:applications}
Theorem~C allows to obtain the main results from
\cite{Ringel:Schmidmeier:2008b} and further properties as direct
consequences of properties from the theory of weighted projective
lines. Indeed, as a general rule, we will prove results first for the
category $\vect\XX$ or the stable category $\svect\XX$ of vector
bundles, and then export such results to $\nilop{p}$ or $\snilop{p}$.
In particular, the difficult classification for the tubular case
$\nilop{6}$ thus appears as a consequence of the classification of
indecomposable bundles on $\vect\XX(2,3,6)$ from
\cite{Lenzing:Meltzer:1993} which is analogous to Atiyah's
classification~\cite{Atiyah:elliptic} of vector bundles on a smooth
elliptic curve. Of course, we also use the approach to establish
additional properties of $\snilop{p}$ among them the existence of
various types of tilting objects and establish that the categories are
Calabi-Yau.

\subsection*{Action of the Picard group}

Obviously, the $\LL$-action on $\vect\XX$ by line bundle twist (=
degree shift) induces an $\LL$-action on
$\svect\XX$. By transport of structure, Theorem~C then induces an
$\LL$-action on $\snilop{p}$. This action of the Picard group of $\XX$
on $\svect\XX=\snilop{p}$ reveals a certain amount of symmetry of
$\svect\XX$ which is instrumental in proving most of the properties to
follow. (An important example is the Calabi-Yau property to be
discussed later. By contrast the treatment of the Fuchsian
singularities in  \cite{KST-2}, \cite{Lenzing:Pena:2011} lacks this amount of
symmetry and only yields a finite number of categories which are
fractionally Calabi-Yau.)

\begin{proposition}
The Picard group $\LL=\LL(2,3,p)$ acts on $\snilop{p}$. Let $s$ denote
the automorphism induced by the degree shift of $\mod\ulPp$, then the
generators $\vx_i$ of $\LL$ act as follows on $\snilop{p}$
\begin{enumerate}
 \item[(i)] $\vx_1$ acts as $\tau^3s^3$,
 \item[(ii)] $\vx_2$ acts as $\tau^2s^2$,
 \item[(iii)] $\vx_3$ acts as $s$.
\end{enumerate}
\end{proposition}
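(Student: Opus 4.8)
The plan is to deduce the result from Theorem~C together with a short computation inside the group $\LL$. First I would check that the $\LL$-action is well defined by exact autoequivalences. For $\vx\in\LL$ the line bundle twist $\si_\vx\colon\vect\XX\to\vect\XX$, $E\mapsto E(\vx)$, is an automorphism which permutes the system $\Ll$ of all line bundles; since $\Hom{}{L}{\si_\vx(\eta)}\cong\Hom{}{L(-\vx)}{\eta}$ for every line bundle $L$, it carries distinguished exact sequences to distinguished exact sequences and maps the ideal $[\Ll]$ into itself, hence descends to a triangle autoequivalence of $\svect\XX=\vect\XX/[\Ll]$. From $\si_\vx\si_\vy=\si_{\vx+\vy}$ this is an $\LL$-action on $\svect\XX$, and transporting it along the triangle equivalence $\ulPhi\colon\svect\XX\to\snilop{p}$ of Theorem~C produces the $\LL$-action on $\snilop{p}$ appearing in the statement. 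Since $\LL$ is abelian these autoequivalences pairwise commute, so the order in which the compositions below are written does not matter.

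Next I would pin down the two basic autoequivalences occurring in the formulas. By the last assertion of Theorem~C the twist $\si_{\vx_3}$ corresponds under $\ulPhi$ to the grading shift $s$ on $\mod\ulPp$; this is exactly (iii). On the other hand the twist $\si_\vom$ by the dualizing element $\vom=\vc-(\vx_1+\vx_2+\vx_3)$ is the Auslander--Reiten translation of $\coh\XX$ (Section~\ref{sect:basics}); since the Serre duality of $\svect\XX$ is induced from that of $\coh\XX$ (so that the Serre functor of $\svect\XX$ is the $\vom$-twist followed by the suspension), the Auslander--Reiten translation $\tau$ of $\svect\XX$ is realized by $\si_\vom$, and $\ulPhi$ carries it to the Auslander--Reiten translation $\tau$ of $\snilop{p}$.

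Then I would compute in $\LL$. Using $\vom=\vc-(\vx_1+\vx_2+\vx_3)$ and the relations $2\vx_1=3\vx_2=p\vx_3=\vc$, one has $3\vom+3\vx_3=3\vc-3\vx_1-3\vx_2=2\vc-3\vx_1=\vx_1$ (using $3\vx_2=\vc$ and $2\vc=4\vx_1$) and, similarly, $2\vom+2\vx_3=2\vc-2\vx_1-2\vx_2=\vc-2\vx_2=\vx_2$, that is,
\[
  \vx_1 = 3\vom+3\vx_3, \qquad \vx_2 = 2\vom+2\vx_3 .
\]
Hence $\si_{\vx_1}=\si_\vom^{3}\circ\si_{\vx_3}^{3}$ and $\si_{\vx_2}=\si_\vom^{2}\circ\si_{\vx_3}^{2}$ as autoequivalences of $\vect\XX$, and therefore of $\svect\XX$. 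Substituting $\si_\vom\mapsto\tau$ and $\si_{\vx_3}\mapsto s$ we conclude that $\vx_1$ acts on $\snilop{p}$ as $\tau^{3}s^{3}$ and $\vx_2$ as $\tau^{2}s^{2}$, which gives (i) and (ii).

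The only step that requires more than bookkeeping is the identification of $\si_\vom$ with the triangulated Auslander--Reiten translation $\tau$ of $\svect\XX$ --- that is, making precise the sense in which ``the Serre duality of $\svect\XX$ is induced from that of $\coh\XX$'' --- and this is a recollection from the structure theory of $\svect\XX$ rather than new work. As a consistency check one may note that the relations $\vc=2\vx_1=3\vx_2=p\vx_3$ force $\tau^{6}s^{6}=s^{p}$, i.e.\ $\tau^{6}=s^{\,p-6}$, which for $p=6$ specializes correctly to $\tau^{6}=\mathrm{id}$, matching the tubular/elliptic picture.
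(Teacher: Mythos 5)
Your proposal is correct and follows essentially the same route as the paper: the paper reduces the proposition to the identities $\vx_1=3(\vx_3+\vom)$ and $\vx_2=2(\vx_3+\vom)$ in $\LL$ (its Lemma on the generators $\vx_3,\vom$), combined with the facts that $\vx_3$-shift corresponds to $s$ by Theorem~C and $\vom$-shift realizes $\tau$. Your additional remarks on well-definedness of the $\LL$-action and the consistency check $6\vom=(p-6)\vx_3$ are sound but not needed beyond what the paper records.
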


The proof immediately follows from the next lemma.

\begin{lemma}
Let $\LL=\LL(2,3,p)$. Then $\LL$ is generated by $\vx_3$ and
$\vom$. Moreover, we have with $\bx_i=\vx_i+\vom$
\begin{enumerate}
\item[(i)] $\bx_1=\bx_2+\bx_3$,
\item[(ii)] $\vx_2=2\bx_3$,
\item[(iii)] $\vx_1=3\bx_3$.
\end{enumerate}
\end{lemma}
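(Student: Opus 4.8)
The plan is to verify the three claimed identities directly from the defining relations of $\LL = \LL(2,3,p)$, after first checking that $\vx_3$ and $\vom$ indeed generate $\LL$. Recall that $\LL$ has generators $\vx_1, \vx_2, \vx_3$ with relations $2\vx_1 = 3\vx_2 = p\vx_3 = \vc$, and $\vom = \vc - (\vx_1 + \vx_2 + \vx_3)$. The key arithmetic observation is that $\LL/\ZZ\vx_3$ is cyclic of order $6$, generated by the class of $\vom$ (this is exactly Lemma~\ref{lemma:cyclic}), so $\vx_3$ together with $\vom$ generate $\LL$ modulo torsion relations; one must then check there is no further obstruction, i.e.\ that $\vx_1$ and $\vx_2$ themselves are expressible. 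This will follow once the three identities are established, since they exhibit $\vx_1, \vx_2$ (hence all generators) as $\ZZ$-combinations of $\bx_3 = \vx_3 + \vom$ and $\vx_3$.

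First I would compute $\bx_i + \bx_j = \vx_i + \vx_j + 2\vom$ in general, so that the identity $\bx_1 = \bx_2 + \bx_3$ reduces to checking $\vx_1 + \vom = \vx_2 + \vx_3 + 2\vom$, i.e.\ $\vx_1 = \vx_2 + \vx_3 + \vom = \vx_2 + \vx_3 + \vc - \vx_1 - \vx_2 - \vx_3 = \vc - \vx_1$, which is precisely the relation $2\vx_1 = \vc$. So (i) is immediate. For (ii), $\vx_2 = 2\bx_3 = 2\vx_3 + 2\vom$ rearranges to $2\vom = \vx_2 - 2\vx_3$, i.e.\ $2(\vc - \vx_1 - \vx_2 - \vx_3) = \vx_2 - 2\vx_3$, i.e.\ $2\vc - 2\vx_1 - 2\vx_2 - 2\vx_3 = \vx_2 - 2\vx_3$, i.e.\ $2\vc - 2\vx_1 = 3\vx_2$. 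Since $2\vx_1 = \vc$ and $3\vx_2 = \vc$ this is the true statement $2\vc - \vc = \vc$. For (iii), $\vx_1 = 3\bx_3 = 3\vx_3 + 3\vom$ rearranges to $3\vom = \vx_1 - 3\vx_3$, i.e.\ $3\vc - 3\vx_1 - 3\vx_2 - 3\vx_3 = \vx_1 - 3\vx_3$, i.e.\ $3\vc - 3\vx_2 = 4\vx_1$; using $3\vx_2 = \vc$ and $2\vx_1 = \vc$ this reads $2\vc = 2\vc$, again true. Alternatively (iii) follows formally from (i) and (ii): $\vx_1 = \vc - \vx_1$ gives $2\vx_1 = \vc = 3\vx_2 = 6\bx_3$, but one still needs $\vx_1 = 3\bx_3$ on the nose, so I would keep the direct computation or derive it as $\bx_1 - \bx_3 = \bx_2$ combined with $\bx_2 + \bx_3 = \vx_2 = 2\bx_3$ yielding $\bx_1 = 3\bx_3$ and then $\vx_1 = \bx_1 - \vom$... — here one must be careful, since $\vx_1 \ne \bx_1$, so the cleanest route is the direct verification above.

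For the generation statement, once (ii) and (iii) express $\vx_1, \vx_2 \in \langle \vx_3, \bx_3\rangle \subseteq \langle \vx_3, \vom\rangle$, and $\vx_3$ is already a generator, I conclude $\LL = \langle \vx_1, \vx_2, \vx_3\rangle = \langle \vx_3, \vom\rangle$. I do not expect any genuine obstacle here: the whole lemma is a finite presentation-level computation in a rank-one abelian group, and the only mild subtlety is keeping straight the distinction between $\vx_i$ and $\bx_i = \vx_i + \vom$, together with correctly using $\vom = \vc - \vx_1 - \vx_2 - \vx_3$ and the three relations $2\vx_1 = 3\vx_2 = p\vx_3 = \vc$. (Note the relations involving $p$ play no role in these particular identities, which hold for all $p$; this is consistent with the lemma being stated for $\LL(2,3,p)$ with the weights $2$ and $3$ doing the work.) I would present the proof as a short sequence of displayed equalities for (i)--(iii), preceded by the one-line remark deriving generation from them.
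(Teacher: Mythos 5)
Your proof is correct and follows essentially the same route as the paper's: a direct verification of (i)--(iii) from the relations $2\vx_1=3\vx_2=p\vx_3=\vc$ and $\vom=\vc-\vx_1-\vx_2-\vx_3$, with the generation claim then immediate from (ii) and (iii). The abandoned alternative derivation of (iii) contains a slip ($\bx_2+\bx_3$ equals $\bx_1$, not $\vx_2$), but since you explicitly discard it in favour of the direct computation, nothing is affected.
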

\begin{proof}
  (i) We have
  $\bx_2+\bx_3=\vom+(\vom+\vx_2+\vx_3)=\vom+\vx_1=\bx_1$.

  (ii) $2\bx_3=2\vc-2\vx_1-2\vx_2=\vc-2\vx_2=\vx_2$.

  (iii) $3\bx_3=3\vc-3\vx_1-3\vx_2=\vx_1$.
\end{proof}

The next result is not used otherwise in the paper; it follows using ~\cite{Kussin:Lenzing:Meltzer:2010pre}.
\begin{corollary}
By means of the equivalence $\Phi$, the suspension functor $[1]$ of $\svect\XX$ corresponds to the functor $\tau^3s^3$ on  $\snilop{p}$.
\end{corollary}
\begin{proof}
For weight type $(2,p,q)$ it is shown in~\cite{Kussin:Lenzing:Meltzer:2010pre} that the shift with $\vx_1$ serves as the suspension functor for $\svect\XX$.
\end{proof}

\subsection*{Tilting objects and Orlov's trichotomy}
First we establish two tilting objects in $\svect\XX=\snilop{p}$ with
non-isomorphic endomorphism rings. We recall that an object $T$ in a triangulated
category $\Tt$ is a \emph{tilting object} if first it has no self-extensions, i.e.\ $\Hom{}{T}{T[n]}=0$ for each non-zero integer $n$ and, secondly, it generates $\Tt$ homologically, i.e.\ the condition $\Hom{}{T}{X[n]}=0$ for each integer $n$ forces that $X=0$.
\begin{proposition}\label{prop:nakayama}
Assume $U$ is a simple right $\ulPp$-module lying in $\nilop{p}$. Then
$$
\bigoplus_{a=0,\ldots,p-2,\; b=0,1} \tau^{4a+b}s^{3a+b}(U)
$$
is a tilting object in $\snilop{p}$ with endomorphism ring
$A(2(p-1),3)$; see~\eqref{eqn:nakayama}.
\end{proposition}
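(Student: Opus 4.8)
The statement to prove is Proposition~\ref{prop:nakayama}: given a simple $\ulPp$-module $U$ in $\nilop{p}$, the object
$$T=\bigoplus_{a=0,\ldots,p-2,\; b=0,1} \tau^{4a+b}s^{3a+b}(U)$$
is a tilting object in $\snilop{p}=\svect\XX$ with endomorphism ring $A(2(p-1),3)$. By Proposition~\ref{prop:simple:Auslander} I may assume $U=\Phi(E(L)(\vx_1))$ for a persistent line bundle $L$ from the upper bar, so it suffices — via the triangle equivalence $\ulPhi\colon\svect\XX\lra\snilop{p}$ and the dictionary $\vx_1\leftrightarrow\tau^3s^3$, $\vx_2\leftrightarrow\tau^2s^2$, $\vx_3\leftrightarrow s$ from the preceding proposition — to exhibit the corresponding object $\tilde T$ in $\svect\XX$ as a tilting object there and compute its endomorphism ring. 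Under the dictionary, $\tau^{4a+b}s^{3a+b}$ corresponds to a shift by $a\,\vom' + b\,\vom'$-type element; more precisely I first translate $\tau=(-)(\vom)$ and $s=(-)(\vx_3)$, so $\tau^{4a+b}s^{3a+b}(U)$ corresponds in $\svect\XX$ to $E(L)\bigl(\vx_1+(4a+b)\vom+(3a+b)\vx_3\bigr)$, and I simplify the exponent using $4\vom+3\vx_3$ and $\vom+\vx_3$ in $\LL$. The upshot I expect is that $\tilde T$ is a direct sum of $2(p-1)$ shifts of a single Auslander bundle $E=E(L)$, arranged along two ``rows'' of length $p-1$.

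\emph{Step 1 (identify $\tilde T$ concretely).} Work out the $2(p-1)$ shift elements $\vx^{(a,b)}\in\LL$ so that $\tilde T=\bigoplus E(L)(\vx^{(a,b)})$, and check that these are pairwise non-congruent in $\LL$, so the summands are pairwise non-isomorphic exceptional bundles in $\svect\XX$ (each shift of the exceptional Auslander bundle $E$ is exceptional, as recalled in Section~\ref{sect:basics}).

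\emph{Step 2 (no self-extensions).} Show $\Homstab{\svect\XX}{\tilde T}{\tilde T[n]}=0$ for $n\neq 0$. Using Serre duality on $\svect\XX$ (induced from $\coh\XX$) and the fact that the suspension is the twist by $\vx_1$, this reduces to vanishing of $\Homstab{}{E(L)(\vx^{(a,b)})}{E(L)(\vx^{(a',b')}+k\vx_1)}$ for the relevant finite range of $k$, which in turn — since $E$ is an Auslander bundle sitting in the almost split sequence $0\to L(\vom)\to E\to L\to 0$ — follows from degree/slope considerations together with the known Hom- and Ext-computations between line bundles and Auslander bundles on $\XX(2,3,p)$. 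Concretely, apply $\Hom{}{-}{E(L)(\vy)}$ to the defining sequence and use $\Hom{\XX}{\Oo(\vx)}{\Oo(\vy)}=S_{\vy-\vx}$ to pin down exactly when a stable Hom can survive.

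\emph{Step 3 (generation).} Show $\tilde T$ generates $\svect\XX$ as a triangulated category, i.e. the smallest thick subcategory containing $\tilde T$ is all of $\svect\XX$. I would use that $\svect\XX$ is generated by the Auslander bundles (these, together with their shifts, exhaust the category by the structure of $\vect\XX$ as a Frobenius category with the line bundles as projective-injectives), and then exhibit every Auslander-bundle shift as an iterated cone of the chosen summands $E(L)(\vx^{(a,b)})$, exploiting the almost split sequences and the $6$-periodic persistent/fading pattern of Lemma~\ref{lemma:cyclic} to ``walk'' from one shift to the next. Equivalently, count: $\rank\Knull{\svect\XX}=2(p-1)$ by the Orlov-type formula in the introduction, so once no-self-extensions and the correct number of pairwise non-isomorphic indecomposable summands are established, generation follows from the general principle that a rigid object with $\rank\Knull{}$ many non-isomorphic indecomposable summands in a suitable (homologically finite, Serre-dual) triangulated category is automatically tilting.

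\emph{Step 4 (endomorphism ring).} Compute $\End{}{\tilde T}$. The pairwise stable Hom spaces $\Homstab{}{E(L)(\vx^{(a,b)})}{E(L)(\vx^{(a',b')})}$ are at most one-dimensional, nonzero exactly for the ``covering'' relations that give the linear $A_{2(p-1)}$ quiver \eqref{eqn:nakayama}, and composites of three consecutive arrows vanish because the relevant Hom from $E$ to a sufficiently large twist factors through a line bundle (hence is zero in $\svect\XX$); this yields precisely the relations $x^3=0$, identifying the endomorphism ring with the Nakayama algebra $A(2(p-1),3)$.

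\textbf{Main obstacle.} The genuinely delicate part is Step~2 (and the composition-vanishing half of Step~4): proving that all higher self-extensions vanish requires a careful bookkeeping of $\Hom$ and $\Ext^1$ between Auslander bundles and their line-bundle twists on $\XX(2,3,p)$, keeping track of slopes and of which twists land in the fading locus, uniformly in $p$. The generation step, by contrast, I expect to be comparatively soft once the rank count and rigidity are in hand.
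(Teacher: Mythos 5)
Your reduction coincides with the paper's own: by Proposition~\ref{prop:simple:Auslander} one may take $U=\Phi\bigl(E(L)(\vx_1)\bigr)$ for an Auslander bundle $E(L)$, and under the dictionary $\vx_1\mapsto\tau^3s^3$, $\vx_3\mapsto s$, $\vom\mapsto\tau$ the twist by $a\bx_1+b\bx_3$ (with $\bx_i=\vx_i+\vom$, using $3\bx_3=\vx_1$) is exactly $\tau^{4a+b}s^{3a+b}$, so the object in question is $\ulPhi$ of $\bigoplus_{\vx\in M}E(\vx)$ with $M=\Set{a\bx_1+b\bx_3}{a=0,\dots,p-2,\ b=0,1}$. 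At that point, however, the paper does not prove that this bundle is a tilting object with endomorphism ring $A(2(p-1),3)$; it cites the companion paper \cite{Kussin:Lenzing:Meltzer:2010pre} for that fact. Your Steps 2--4 are therefore an attempt to supply an argument that the present paper deliberately outsources, and they remain a plan: the rigidity computation and the identification of the morphism spaces and the relations $x^3=0$ are precisely the delicate bookkeeping with $\Hom{}{E(\vx)}{E(\vy)[n]}$ that you describe but do not carry out.

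Beyond this incompleteness there is one genuine logical gap, in Step 3. You propose to deduce generation from rigidity together with the count $\rank\Knull{\svect\XX}=2(p-1)$, but within this paper that rank is obtained in Corollary~\ref{cor:Knull} \emph{as a consequence of} Proposition~\ref{prop:nakayama} (or of Proposition~\ref{prop:rectangle:tilting}), so invoking it here is circular unless you independently establish the rank, e.g.\ by actually setting up the $\LL$-graded Orlov semiorthogonal decomposition, which the paper only alludes to. Moreover, the ``general principle'' that a rigid object with $\rank\Knull{}$ pairwise non-isomorphic indecomposable summands is automatically tilting is not valid for an arbitrary homologically finite triangulated category with Serre duality; it holds in specific settings (derived categories of hereditary categories, cluster categories) whose applicability here is essentially equivalent to what is being proved. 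A non-circular route must prove generation directly, e.g.\ by exhibiting the line-bundle twists $E(\vx)$, $\vx\in\LL$, as iterated cones of the chosen summands and then all of $\svect\XX$ from these, as is done in \cite{Kussin:Lenzing:Meltzer:2010pre}.
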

\begin{proof}
  Let $E$ be an Auslander bundle. With $\bx_i=\vx_i+\vom$ we put
  $$M=\Set{a\bx_1+b\bx_3}{a=0,\ldots, p-2,\; b=0,1}$$ and define $T$
  as the direct sum
  of all $E(\vx)$, with $\vx$ in $M$.
  It is shown in~Theorem~\ref{thm:tilting} that $T$ is a
  tilting object of $\svect\XX$
  with endomorphism ring $\End{}{T}=A(2(p-1),3)$. Transferred to
  $\snilop{p}$ this yields the claim by
  Proposition~\ref{prop:simple:Auslander}.
\end{proof}

Independently, and by different methods the derived equivalence of the algebras
$A(2(p-1),3)$ and $B(2,p-1)$ was shown by S.~Ladkani~\cite{Ladkani}.

To obtain further interesting tilting objects in $\svect\XX$ we need to enlarge the class of Auslander bundles. For $\vx\in M:=\{b\vx_2+c\vx_3\mid b=0,1;\ c=0,\dots,p-2\}$ we define the \emph{extension bundle} $E\langle\vx\rangle$ as the extension term of the unique non-split
exact sequence $0\ra\Oo(\vom)\ra E\langle\vx\rangle\ra\Oo(\vx)\ra 0$. Note for this that $\Ext1{}{\Oo(\vx)}{\Oo(\vom)}=k$. It follows from~\cite{Kussin:Lenzing:Meltzer:2010pre} that each extension bundle $E\langle \vx\rangle$ (with $\vx$ in $M$) is exceptional in $\coh\XX$ and in $\svect\XX$. More is true,
by~\cite{Kussin:Lenzing:Meltzer:2010pre} the system
$T=\bigoplus_{\vx\in M}E\langle\vx\rangle$ is a tilting object in
$\svect\XX$ with $\End{}{T}=B(2,p-1)$, the incidence algebra of the
poset (\ref{eqn:rectangle})
  that is the $2\times (p-1)$-rectangle with all commutativities. Note
  that such
  diagrams appear in singularity theory. By applying Theorem~C we thus
  obtain the following  result

\begin{proposition} \label{prop:rectangle:tilting}
 The category $\snilop{p}=\svect\XX$ with $\XX=\XX(2,3,p)$ has a
 tilting object $T$ whose endomorphism
 ring is the algebra $B(2,p-1)$. In particular, the algebras
 $A(2(p-1),3)$ and $B(2,p-1)$ are derived equivalent.
 \qed
\end{proposition}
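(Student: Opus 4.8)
The plan is to deduce the statement from the corresponding assertion for $\svect\XX$ by transport of structure along the equivalence of Theorem~C. First I would recall from \cite{Kussin:Lenzing:Meltzer:2010pre} that the system $T=\bigoplus_{\vx\in M}E\langle\vx\rangle$, with $M=\{b\vx_2+c\vx_3\mid b=0,1;\ c=0,\dots,p-2\}$ the $2\times(p-1)$ rectangle, is a tilting object of the triangulated category $\svect\XX$ with endomorphism ring $\End{}{T}\iso B(2,p-1)$. The three defining features of a tilting object --- that $T$ generates the ambient triangulated category, that $\Hom{}{T}{T[n]}=0$ for all $n\neq0$, and the isomorphism type of its endomorphism ring --- are all preserved under triangle equivalences. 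Since $\ulPhi\colon\svect\XX\to\snilop{p}$ from Theorem~C is such an equivalence, $\ulPhi(T)$ is a tilting object of $\snilop{p}=\svect\XX$ whose endomorphism ring is isomorphic to $B(2,p-1)$; this proves the first assertion.

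For the derived equivalence I would invoke the standard consequence of tilting theory in algebraic triangulated categories: if an algebraic, Krull--Schmidt, $\Hom$-finite triangulated category $\mathcal{T}$ admits a tilting object $U$ whose endomorphism ring has finite global dimension, then $\mathcal{T}$ is triangle equivalent to $\Der{\mmod\End{}{U}}$. Both the Nakayama algebra $A(2(p-1),3)$, the endomorphism ring of the tilting object from Proposition~\ref{prop:nakayama}, and the incidence algebra $B(2,p-1)$ have finite global dimension, and $\snilop{p}$ has the required properties (recorded for $\svect\XX$ in Section~\ref{sect:basics} and carried over by Theorem~C). Applying the statement to the two tilting objects yields
$$\Der{\mmod A(2(p-1),3)}\ \simeq\ \snilop{p}\ \simeq\ \Der{\mmod B(2,p-1)},$$
whence the two algebras are derived equivalent. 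Alternatively one may argue by Rickard's theorem: under the identification $\snilop{p}\simeq\Der{\mmod A(2(p-1),3)}$ the second tilting object becomes a tilting complex with endomorphism ring $B(2,p-1)$.

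I do not expect any genuine obstacle here. The substantive content --- the explicit construction of the rectangle-shaped tilting object and the identification of its endomorphism algebra --- is imported from \cite{Kussin:Lenzing:Meltzer:2010pre}, and the remaining steps are formal once Theorem~C is at hand. The only point requiring a little care is checking that $\snilop{p}$ is well behaved enough for the passage to the derived category of an endomorphism algebra to be legitimate, but this is immediate from the properties of $\svect\XX$ collected in Section~\ref{sect:basics} together with the triangle equivalence of Theorem~C.
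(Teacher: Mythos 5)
Your proposal matches the paper's argument: the paper likewise imports the rectangle-shaped tilting object $T=\bigoplus_{\vx\in M}E\langle\vx\rangle$ with $\End{}{T}=B(2,p-1)$ from \cite{Kussin:Lenzing:Meltzer:2010pre} and transfers it to $\snilop{p}$ via Theorem~C, with the derived equivalence of $A(2(p-1),3)$ and $B(2,p-1)$ following formally from the existence of two tilting objects in the same triangulated category. Your extra remarks on Rickard's theorem and finite global dimension only make explicit what the paper leaves implicit.
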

By $B'(2,p-1)$ we denote the incidence algebra of the poset (fully
commutative quiver)
\small
$$
\xymatrixcolsep{0.5pc}
\xymatrix@!C=36pt@R18pt{ 1\ar @{->}[r]\ar
  @{->}[d]\ar@{..}[rd] & 2\ar
  @{->}[r]\ar @{->}[d]\ar@{..}[rd] &3\ar[d]\ar[r]&
  \cdots\ar[r]&p-3\ar[d]\ar@{..}[dr] \ar @{->}[r] & p-2\ar @{->}[r]\ar
  @{->}[d]& p-1\\
  1'\ar @{->}[r] & 2'\ar @{->}[r] &3'\ar[r] &\cdots\ar[r]&(p-3)' \ar
  @{->}[r] & (p-2)'
   &  }
$$\normalsize
\sloppy
\begin{corollary}
Let $S$ be a simple $\ulPp$-module belonging to $\snilop{p}$. Then the
right perpendicular category $\rperp{S}$, consisting of all objects
$X$ from $\snilop{p}$ satisfying $\Hom{}{S}{X[n]}=0$ for each integer
$n$, is triangulated with Serre duality. Moreover, the category
$\rperp{S}$ has tilting objects $U$ and $U'$ such that $\End{}{U}\iso
A(2p-3,3)$ and $\End{}{U'}\iso B'(2,p-1)$. In particular, the algebras
$A(2p-3,3)$ and $B'(2,p-1)$ are derived equivalent.
\end{corollary}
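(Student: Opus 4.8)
The plan is to deduce everything from the tilting picture already established in Propositions~\ref{prop:nakayama} and~\ref{prop:rectangle:tilting}, by computing the right perpendicular category $\rperp{S}$ on the tilting side and transporting back. First I would invoke Proposition~\ref{prop:simple:Auslander}: every simple $\ulPp$-module $S$ lying in $\snilop{p}$ is of the form $\Phi(E(L)(\vx_1))$ for a persistent line bundle $L$ from the upper bar, so after applying a power of the shift $s$ (which acts as an automorphism of $\snilop{p}$ and only permutes the vertices of the tilting quivers cyclically) we may assume $S$ is the simple at a prescribed vertex of the quiver~\eqref{eqn:nakayama}. Concretely, in the tilting object $T=\bigoplus_{a=0,\ldots,p-2,\,b=0,1}\tau^{4a+b}s^{3a+b}(U)$ of Proposition~\ref{prop:nakayama}, with $\End{}{T}=A(2(p-1),3)$, the module $U=S$ itself is one of the indecomposable summands, say the summand at vertex $1$. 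Then $\rperp{S}$ is equivalent, as a triangulated category, to the right perpendicular category (in the derived-category sense) of the simple $A(2(p-1),3)$-module at vertex $1$.

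The key step is then a purely combinatorial/homological computation inside $\Der{A(2(p-1),3)\textrm{-}\mathrm{mod}}$: the perpendicular category of the simple projective $P_1$ at the source vertex of the linear Nakayama quiver $1\to 2\to\cdots\to 2(p-1)$ with relations $x^3=0$. Removing the source vertex of such a quiver, the remaining idempotent-truncated algebra is $A(2p-3,3)$, and one checks that the truncation functor identifies $\Der{A(2p-3,3)\textrm{-}\mathrm{mod}}$ with $\rperp{S_1}$ inside $\Der{A(2(p-1),3)\textrm{-}\mathrm{mod}}$; this is the standard behaviour of perpendicular categories for simples attached to a source (or sink) vertex, and it produces the tilting object $U$ with $\End{}{U}\iso A(2p-3,3)$. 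Serre duality on $\rperp{S}$ is inherited from Serre duality on $\snilop{p}$ (which holds by Theorem~C together with the Serre duality of $\svect\XX$), since the perpendicular subcategory of a single object is closed under the Serre functor whenever that object is (up to shift) fixed by it — and a simple summand of a tilting object over a fractional Calabi-Yau category has this property after the appropriate reindexing.

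For the second tilting object $U'$ with $\End{}{U'}\iso B'(2,p-1)$ I would run the same argument starting from Proposition~\ref{prop:rectangle:tilting}: there $\End{}{T}=B(2,p-1)$ is the incidence algebra of the $2\times(p-1)$ rectangle~\eqref{eqn:rectangle}, and $S$ corresponds to the simple at the vertex $(p-1)$, i.e.\ the corner with a maximal coordinate; deleting that corner from the poset yields exactly the poset underlying $B'(2,p-1)$, and the truncation functor gives the identification $\rperp{S}\simeq\Der{B'(2,p-1)\textrm{-}\mathrm{mod}}$ with tilting object $U'$. The last assertion, that $A(2p-3,3)$ and $B'(2,p-1)$ are derived equivalent, is then immediate: both are endomorphism rings of tilting objects in one and the same triangulated category $\rperp{S}$. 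The main obstacle I anticipate is verifying cleanly that $S$ really does occur as a summand of \emph{both} tilting objects (equivalently, identifying the correct vertex in each quiver) and that the relevant vertex is a source/sink so that the perpendicular category is again the derived category of the obvious truncated algebra — this requires tracking the explicit identifications of Propositions~\ref{prop:nakayama} and~\ref{prop:rectangle:tilting} under $\Phi$ and the $\LL$-action, rather than any deep new input.
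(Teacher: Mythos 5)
Your route is essentially the paper's: identify $S$ with an Auslander bundle $E$ in $\svect\XX$ via Proposition~\ref{prop:simple:Auslander} and the $\LL$-action, observe that $E$ occurs as the summand at an end vertex of each of the two tilting objects of Propositions~\ref{prop:nakayama} and~\ref{prop:rectangle:tilting}, and delete that vertex to obtain tilting objects for $\rperp{E}$ with endomorphism rings $A(2p-3,3)$ and $B'(2,p-1)$; the paper's proof is exactly this, compressed into two sentences. Two caveats. First, your justification of Serre duality is wrong as stated: fractional Calabi--Yau only gives $\mathbb{S}^n\iso[m]$ globally, not $\mathbb{S}X\iso X[k]$ objectwise, and indeed the Serre functor $\tau[-1]$ sends the Auslander bundle $E(L)$ to $E(L(\vom))[-1]$, which is a \emph{different} Auslander bundle; so $\rperp{S}$ is not closed under the Serre functor by the mechanism you describe. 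Serre duality nevertheless holds, either because $S$ is exceptional so that $\rperp{S}$ is an admissible subcategory (Bondal--Kapranov), or simply because your own computation identifies $\rperp{S}$ with $\Der{\mmod{A(2p-3,3)}}$, which has Serre duality by Happel since the algebra has finite global dimension. Second, the step you yourself flag as the main obstacle --- that $S$ really is a summand of the rectangle tilting object, sitting at the maximal corner $(p-1)'$ --- is a genuine input, not mere bookkeeping: it is the nontrivial fact that $E\langle\vx_2+(p-2)\vx_3\rangle$ is an Auslander bundle (the defining extension $0\ra\Oo(\vom)\ra E\langle\vx\rangle\ra\Oo(\vx)\ra 0$ is not of the almost-split shape for $\vx\neq 0$, so this requires an argument, which the paper imports from \cite{Kussin:Lenzing:Meltzer:2010pre}). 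With that fact granted, the rest of your argument (stratifying idempotent at a source/sink vertex, hence $\rperp{P_v}\simeq\Der{\mmod{A/Ae_vA}}$, and the derived equivalence of the two endomorphism rings as an immediate consequence) is sound.
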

\fussy
\begin{proof}
We switch to the category $\svect\XX$, where we have to calculate the
category $\rperp{E}$ for an Auslander bundle $E$. The first claim
follows from Proposition~\ref{prop:nakayama} and its proof. For the
second claim we use Proposition~\ref{prop:rectangle:tilting} and the
fact that $E\langle \vx_2+(p-2)\vx_3\rangle$ is an Auslander bundle.
\end{proof}

Returning to the context of Proposition~\ref{prop:rectangle:tilting},
we want to give the tilting object $\Phi(T)$ of $\snilop{p}$ a more
concrete shape. We briefly point out what the
$\Phi(E\langle\vx\rangle)$ are in the language of the category
$\snilop{p}$. For this let $\sPp^{\mathrm{up}}$, resp.\
$\sPp^{\mathrm{low}}$, denote the full subcategory of $\sPp$ formed by
the objects of the upper (resp.\ lower) bar. Moreover, we identify
$\mmod{\sPp^{\mathrm{up}}}$ with the full subcategory of $\mmod\sPp$
of all modules whose support is contained in
$\sPp^{\mathrm{up}}$. Further we identify $\mmod{\sPp}$ with the
category $\modgr\ZZ{A}$ of finitely generated $\ZZ$-graded modules
over the algebra $A=k[x]/(x^p)$ with $x$ having degree one.
\begin{lemma}
  (a) The restriction functor
  $\rho\colon\mmod\sPp\ra\mmod{\sPp^{\mathrm{up}}}$ has an exact left
  adjoint $\lambda$ sending the indecomposable
  $\sPp^{\mathrm{up}}$-projective $P(\vx)$ to the indecomposable
  $\sPp$-projective $P(\vx_2+\vx)$.

(b) Putting $T^{\mathrm{up}}=\bigoplus_{j=0}^{p-2} E\langle
j\vx_3\rangle$ and $T^{\mathrm{low}}=\bigoplus_{j=0}^{p-2}
E\langle{\vx_2+j\vx_3}\rangle$, the tilting object $T$ from the above
proposition has the form $T=T^{\mathrm{up}}\oplus
T^{\mathrm{low}}$. Moreover, with the above identifications this
yields:
$$
\Phi(T^{\mathrm{up}})=\bigoplus_{j=0}^{p-2}x^{j+1}A(j)\quad\textrm{
  and }\quad
\Phi(T^{\mathrm{low}})=\bigoplus_{j=0}^{p-2}\la(x^{j+1}A(j)).\qed
$$
\end{lemma}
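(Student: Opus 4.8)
The plan is to prove both statements by tracking the adjunction $(\lambda,\rho)$ explicitly on projectives and then comparing with the functor $\Phi$ via the sequences already established in the proof of Proposition~\ref{prop:dense}. For part~(a), I would first observe that, under the identification of $\mmod\sPp$ with $\modgr\ZZ{A}$ for $A=k[x]/(x^p)$, the category $\mmod{\sPp^{\mathrm{up}}}$ is the full subcategory of modules supported on the upper bar. The restriction functor $\rho$ is exact (it is restriction along a full subcategory inclusion, i.e.\ just forgetting part of the support), and one exhibits its left adjoint $\lambda$ directly on the projective generators: since $\ulPp$ is the path category of the $p$-ladder (Lemma~B), the indecomposable $\sPp^{\mathrm{up}}$-projective $P(\vx)=\ulPp(-,\Oo(\vx))$ restricted from the upper bar has a natural lift to $P(\vx+\vx_2)$, using the ladder arrow $x_2\colon\Oo(\vx)\to\Oo(\vx+\vx_2)$ connecting lower to upper bar. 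I would check the adjunction isomorphism $\Hom{\sPp}{\lambda P(\vx)}{N}\iso\Hom{\sPp^{\mathrm{up}}}{P(\vx)}{\rho N}$ on projectives (both sides evaluate to $N_{\vx}$, using $\Hom{}{P(\vx)}{-}=(-)_\vx$) and extend to all of $\mmod{\sPp^{\mathrm{up}}}$ by presenting modules as cokernels of maps between projectives; exactness of $\lambda$ then follows because $\lambda$ is right exact by adjunction and sends projectives to projectives, and the two columns of the $p$-ladder are ``parallel'' so that $x_2$ induces an isomorphism of the upper-bar part onto a summand, making $\lambda$ also left exact.

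For part~(b), the decomposition $T=T^{\mathrm{up}}\oplus T^{\mathrm{low}}$ is immediate from the indexing set $M=\{b\vx_1+c\vx_3 : b=0,1;\ c=0,\dots,p-2\}$ together with Lemma (ii)--(iii) of the ``Action of the Picard group'' subsection, which gives $\vx_1=3\bx_3$ and $\vx_2=2\bx_3$, so that the $b=0$ summands form $T^{\mathrm{up}}$ (built from $\Oo(j\vx_1)$-type extensions) and the $b=1$ summands form $T^{\mathrm{low}}$. The computation of $\Phi(T^{\mathrm{up}})$ rests on Proposition~\ref{prop:simple:Auslander}: for a persisting line bundle from the upper bar, $\Phi(E(L)(\vx_1))$ is the simple $\ulPp$-module $S_L$, and more generally one reads off from the exact sequence $0\to\Oo(\vom)\to E\langle j\vx\rangle\to\Oo(j\vx)\to 0$ — after applying $\Phi$ and using that $\Oo(\vom)$ is fading (Lemma~\ref{lemma:cyclic}) so $\Phi\Oo(\vom)=0$ — that $\Phi(E\langle j\vx\rangle)$ is the cokernel of $\Phi$ applied to the relevant comparison map, which under the identification with $\modgr\ZZ{A}$ is exactly the graded module $x^{j+1}A(j)$, i.e.\ the ideal $(x^{j+1})$ of $A$ shifted in degree by $j$. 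For $\Phi(T^{\mathrm{low}})$ one uses part~(a): the lower-bar Auslander bundles $E\langle j\vx_1+\vx_2\rangle$ differ from the upper-bar ones by the twist $\vx_2$, which under $\Phi$ (Theorem~C: $\vx_3\mapsto s$, and the Picard-action lemma) corresponds precisely to applying $\lambda$, whence $\Phi(T^{\mathrm{low}})=\bigoplus_j\lambda(x^{j+1}A(j))$.

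I expect the main obstacle to be the bookkeeping in part~(b): one must carefully match the $\LL$-grading twists $\bx_i=\vx_i+\vom$ appearing in the definitions of $E\langle\vx\rangle$ and $E(\vx)$ against the $\ZZ$-grading on $A=k[x]/(x^p)$ under the identification $\mmod\sPp=\modgr\ZZ{A}$, keeping the degree shifts $A(j)$ consistent with the convention (in Theorem~C) that $\vx_3$ acts as $s$ and $x$ has degree $1$. In particular verifying that the cokernel of $\Phi$ applied to $\Oo(\vom)\to E\langle j\vx\rangle\to\Oo(j\vx)\to 0$ is $x^{j+1}A(j)$ rather than, say, $A/(x^{p-j-1})$ or a differently-shifted ideal requires pinning down exactly which generators of the two-dimensional $\Hom$-space the Auslander-bundle extension uses; this is the analogue of the computation at the end of the proof of Lemma~B (where $x_2x_3^{p-1}\neq 0$ was checked) and of the $1.$~case of Proposition~\ref{prop:dense}. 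Once the upper-bar identification is fixed, the lower-bar statement is then a formal consequence of part~(a) and the fact, used already in Proposition~\ref{prop:rectangle:tilting}, that $E\langle\vx_2+(p-2)\vx_3\rangle$ and its twists are again Auslander bundles, so $\lambda$ simply implements the $\vx_2$-twist on the level of $\Phi$-images.
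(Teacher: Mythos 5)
The paper gives no proof of this lemma (the \qed\ is part of the statement), so your argument has to stand on its own, and as written it has three genuine gaps. First, in part (a) you take $\rho$ to be restriction along the inclusion $\sPp^{\mathrm{up}}\hookrightarrow\sPp$ and assert that $\Hom{\sPp}{\la P(\vx)}{N}$ and $\Hom{\sPp^{\mathrm{up}}}{P(\vx)}{\rho N}$ ``both evaluate to $N_{\vx}$''. By Yoneda the left-hand side is $N_{\vx+\vx_2}$, the value at a \emph{lower}-bar vertex, while for your $\rho$ the right-hand side is $N_{\vx}$, the value at the upper-bar vertex; these already differ for $N=P_{\sPp}(\vx)$, which vanishes on the entire lower bar. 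The left adjoint of restriction along the inclusion sends $P(\vx)$ to $P_{\sPp}(\vx)$, not to $P(\vx+\vx_2)$. For the stated formula to hold, $\rho$ must instead be the functor reading off the lower-bar component $U$ of a module $(U\to M)$, transported to $\sPp^{\mathrm{up}}$ along the evident identification of the two bars; its left adjoint is $V\mapsto(V\xrightarrow{=}V)$, which is visibly exact and does send $P(\vx)$ to $P(\vx+\vx_2)$.

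Second, your mechanism for computing $\Phi(E\langle j\vx_3\rangle)$ --- apply $\Phi$ to $0\to\Oo(\vom)\to E\langle j\vx_3\rangle\to\Oo(j\vx_3)\to0$ and use $\Phi(\Oo(\vom))=0$ --- cannot work: this sequence is \emph{not} distinguished exact (it is non-split with a line bundle as end term, so $\Hom{}{\Oo(j\vx_3)}{-}$ is not exact on it), so Proposition~\ref{prop:exact} does not apply; if $\Phi$ were exact on it you would obtain the full projective $A(j)$ rather than the proper submodule $x^{j+1}A(j)$. What actually happens is that $\Phi(E\langle j\vx_3\rangle)\to\Phi(\Oo(j\vx_3))$ is injective (anything killed by it factors through the fading bundle $\Oo(\vom)$) with image the morphisms $L\to\Oo(j\vx_3)$ along which the extension pulls back to a split one; since the pullback on $\Ext{1}{}{-}{\Oo(\vom)}$ is Serre-dual to multiplication $S_{m\vx_3}\to S_{j\vx_3}$, the map $x_3^{d}$ lifts exactly when $d\geq j+1$, which is the submodule $x^{j+1}A(j)$. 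Third, the lower-bar case is not the ``$\vx_2$-twist'' of the upper-bar case: $E\langle\vx_2+j\vx_3\rangle$ is not $E\langle j\vx_3\rangle(\vx_2)$ (twisting the defining sequence changes the subobject to $\Oo(\vom+\vx_2)$), and the twist by $\vx_2$ is an autoequivalence whereas $\la$ is not. One must repeat the lifting computation for $\Hom{}{L}{\Oo(\vx_2+j\vx_3)}$ with $L$ in either bar, obtaining $x^{j+1}A(j)$ in both components with the structure map an isomorphism, i.e.\ $\la(x^{j+1}A(j))$ for the correct $\la$ above.
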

Independently this tilting object in $\snilop{p}$ was constructed by
Xiao-Wu Chen~\cite{Chen} with a direct argument not relying on
Theorem~C.

Since the Grothendieck group $\Knull{\coh\XX}$ is free abelian of rank
$p+4$, see~\cite{Geigle:Lenzing:1987}, we obtain from
Proposition~\ref{prop:nakayama} or
Proposition~\ref{prop:rectangle:tilting} the next result.

\begin{corollary} \label{cor:Knull}
  The Grothendieck group of $\svect\XX(2,3,p)=\snilop{p}$ is free
  abelian of rank
  $2(p-1)$. Moreover, we have
  $\rank\Knull{\snilop{p}}-\rank\Knull{\coh\XX}=p-6$. \qed
\end{corollary}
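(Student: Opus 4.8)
The plan is to compute $\Knull{\snilop{p}}$ via the tilting object furnished by Proposition~\ref{prop:nakayama}. Recall from Section~\ref{sect:basics} that $\snilop{p}=\svect\XX$ is a Krull--Schmidt triangulated category with Serre duality; in particular it is Hom-finite, and it is idempotent complete and algebraic, being the stable category of the Frobenius category $\vect\XX$ (itself idempotent complete, as a full, summand-closed subcategory of the abelian category $\coh\XX$). By Proposition~\ref{prop:nakayama} it carries a tilting object $T$ with $\End{}{T}\iso A(2(p-1),3)$, the Nakayama algebra attached to the linear quiver~\eqref{eqn:nakayama} on $2(p-1)$ vertices with relations $x^3=0$; being a Nakayama algebra with admissible relations, this algebra has finite global dimension.

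The first step is then to invoke the standard tilting theorem: a tilting object $T$ in an algebraic, Hom-finite, idempotent complete Krull--Schmidt triangulated category whose endomorphism ring $\Lambda=\End{}{T}$ has finite global dimension induces a triangle equivalence $\snilop{p}\simeq\Der{\mmod\Lambda}$. Consequently $\Knull{\snilop{p}}\iso\Knull{\Der{\mmod\Lambda}}\iso\Knull{\mmod\Lambda}$, and the latter is free abelian on the classes of the simple $\Lambda$-modules, hence of rank equal to the number of vertices of~\eqref{eqn:nakayama}, namely $2(p-1)$. (One may equally use Proposition~\ref{prop:rectangle:tilting}: the incidence algebra $B(2,p-1)$ of the $2\times(p-1)$-rectangle also has finite global dimension and exactly $2(p-1)$ simple modules, giving the same count.) This proves the first assertion.

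For the second assertion it remains only to combine this with the known size of $\Knull{\coh\XX}$. By~\cite{Geigle:Lenzing:1987} the group $\Knull{\coh\XX}$ is free abelian of rank $2+\sum_{i}(p_i-1)=p+4$ for $(p_1,p_2,p_3)=(2,3,p)$, whence
$$
\rank\Knull{\snilop{p}}-\rank\Knull{\coh\XX}=2(p-1)-(p+4)=p-6.
$$
The only point needing genuine care --- and the step I expect to be the main obstacle --- is the passage from the mere existence of a tilting object in $\snilop{p}$ to an actual triangle equivalence with $\Der{\mmod\End{}{T}}$ (as opposed to a statement only at the level of Grothendieck groups); this relies precisely on $\snilop{p}$ being algebraic together with the finiteness of the global dimension of $\End{}{T}$, both already secured above. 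The remaining arithmetic is routine bookkeeping.
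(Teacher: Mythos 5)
Your proposal is correct and follows essentially the same route as the paper, which likewise deduces the corollary from the tilting object of Proposition~\ref{prop:nakayama} (or Proposition~\ref{prop:rectangle:tilting}) together with the fact from \cite{Geigle:Lenzing:1987} that $\Knull{\coh\XX}$ is free of rank $p+4$. You merely spell out in more detail the standard tilting argument (algebraicity, finite global dimension of the endomorphism ring) that the paper leaves implicit.
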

This result serves as a nice illustration of an $\LL$-graded version of Orlov's
theorem~\cite{Orlov:2009}. For this we recall from the Introduction,
(\ref{eq:threeCY}) that there are natural equivalences
\begin{equation}\label{eq:fourCY}
\Tt:=\Dsing\LL{S}=\sCMgr\LL{S}=\svect\XX=\snilop{p}, \textrm{ where
}\XX=\XX(2,3,p)
\end{equation}
where we have used Theorem~C for the last identification.
It follows from an $\LL$-graded version of Orlov's
theorem~\cite{Orlov:2009} that the comparison between $\Der{\coh\XX}$
and any of the four triangulated categories above follows a trichotomy
determined by the \emph{Gorenstein parameter} of the singularity. In
the present $\LL$-graded setting this index equals $6-p$, compare
Corollary~\ref{cor:Knull}. (We have normalized the sign in order to
make it equal to the sign of the Euler characteristic.)
\begin{proposition}[Orlov's trichotomy]
Let $\XX=\XX(2,3,p)$. Then the categories $\Der{\coh\XX}$ and $\Tt$
are related as follows.
\begin{enumerate}
\item[(1)] For $\eulerchar\XX>0$ the category $\Tt$ is
  triangle-equivalent to the right perpendicular category in
  $\Der{\coh\XX}$ with respect to an exceptional sequence of $6-p$
  members;
\item[(2)] For $\eulerchar\XX=0$ the category $\Tt$ is
  triangle-equivalent to $\Der{\coh\XX}$;
\item[(3)] For $\eulerchar\XX<0$ the category $\Der{\coh\XX}$ is
  triangle equivalent to the right perpendicular category in $\Tt$
  with respect to an exceptional sequence of $p-6$ members.
\end{enumerate}
\end{proposition}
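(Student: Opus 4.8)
The plan is to obtain all three cases at once from the $\LL$-graded version of Orlov's theorem \cite{Orlov:2005}, in the form set up in \cite{KST-1,KST-2}, applied to the $\LL$-graded hypersurface $S=k[x_1,x_2,x_3]/(x_1^2+x_2^3+x_3^p)$. The starting observation is that $\Der{\mod^{\LL}(S)}$ has two natural Verdier quotients: dividing out the perfect complexes $\Der{\projgr\LL{S}}$ gives, by definition, the triangulated category of graded singularities $\Dsing\LL{S}$, while dividing out the thick subcategory generated by the finite length modules gives $\Der{\coh\XX}$, since $\coh\XX=\mod^{\LL}(S)/\mod_0^{\LL}(S)$. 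Orlov's theorem compares these two quotients, and by the chain of identifications \eqref{eq:fourCY} the first of them is exactly our category $\Tt$. So it remains to verify the hypotheses, to compute the Gorenstein parameter, and to read off the trichotomy.

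The hypotheses are straightforward: $S$ is a graded hypersurface, hence Gorenstein; the triangle singularity $x_1^2+x_2^3+x_3^p$ is quasihomogeneous with an isolated critical point; and $S_{\vec{0}}=k$. For the parameter, a standard computation with the Koszul resolution of $S$ over $k[x_1,x_2,x_3]$ identifies the graded dualizing module as $\omega_S\cong S(\vom)$ with $\vom=\vc-(\vx_1+\vx_2+\vx_3)$ the dualizing element of Section~\ref{sect:basics}, so $\vom\in\LL$ is the Gorenstein parameter. To turn it into an integer, recall from Lemma~\ref{lemma:cyclic} that $\LL/\ZZ\vx_3$ is cyclic of order $6$, generated by the class of $\vom$, and that $\vx_3$, which under Theorem~C corresponds to the grading shift $s$ on $\nilop{p}$, is the twist playing the role of the Serre twisting sheaf $\Oo(1)$ in this comparison. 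Using $2\vx_1=3\vx_2=p\vx_3=\vc$ one computes
\begin{equation*}
6\vom=6\vc-6(\vx_1+\vx_2+\vx_3)=6\vc-3\vc-2\vc-6\vx_3=(p-6)\vx_3,
\end{equation*}
so that the parameter of $S$, measured against $\vx_3$, is $6-p$ after the sign normalisation adopted in the paper; by $\eulerchar{\XX}=\tfrac1p-\tfrac16$ this index has the same sign as $\eulerchar{\XX}$.

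Feeding the parameter $6-p$ into the $\LL$-graded Orlov theorem produces the three cases. If $\eulerchar{\XX}>0$, i.e.\ $6-p>0$, the comparison functor realises $\Tt=\Dsing\LL{S}$ as the right perpendicular category, inside $\Der{\coh\XX}$, of an exceptional sequence of $6-p$ members, each member being a suitable twist $\Oo(\vx)$ of the structure sheaf; the length $6-p$ of this sequence agrees with $\rank\Knull{\coh\XX}-\rank\Knull{\snilop{p}}$ from Corollary~\ref{cor:Knull}. If $\eulerchar{\XX}=0$, i.e.\ $p=6$, there are no such objects and the comparison functor is already a triangle equivalence $\Tt\simeq\Der{\coh\XX}$. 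If $\eulerchar{\XX}<0$, i.e.\ $6-p<0$, the roles are interchanged and $\Der{\coh\XX}$ appears as the right perpendicular category, inside $\Tt$, of an exceptional sequence of $p-6$ members.

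The step I expect to be the genuine obstacle is having the $\LL$-graded form of Orlov's theorem available at all: the classical statement concerns connected $\ZZ$-graded algebras whose $\operatorname{Proj}$ is an honest projective scheme, whereas here $\LL$ is a rank-one abelian group with torsion and the relevant space is the orbifold $\XX$. Supplying this version --- whether by a direct adaptation of Orlov's argument to the $\LL$-grading or by a suitable reduction --- is exactly what \cite{KST-1} carries out for $\eulerchar{\XX}>0$ and \cite{KST-2} for $\eulerchar{\XX}<0$, while for the tubular case $\eulerchar{\XX}=0$ the equivalence is classical. A secondary technical point is the explicit description of the exceptional sequence; as an independent check one can remain inside the present framework and compare the canonical algebra governing $\Der{\coh\XX}$ from \cite{Geigle:Lenzing:1987} with the Nakayama algebra $A(2(p-1),3)$ governing $\Tt$ from Proposition~\ref{prop:nakayama}, the transition between the two being realised by precisely $|6-p|$ one-point (co)extensions corresponding to the exceptional objects above.
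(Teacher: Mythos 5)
Your proposal follows essentially the same route as the paper, which offers no proof of this proposition beyond asserting that it follows from an $\LL$-graded version of Orlov's theorem with Gorenstein parameter $6-p$ (normalised to match the sign of $\eulerchar\XX$); your computation $6\vom=(p-6)\vx_3$ and the identification of $\Tt=\Dsing\LL{S}$ via \eqref{eq:fourCY} are exactly the ingredients the paper invokes, and your cross-check against Corollary~\ref{cor:Knull} matches the paper's own remark. You correctly flag that the only substantive content --- the existence of the $\LL$-graded Orlov theorem itself --- is imported rather than proven, which is also the paper's position.
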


\subsection*{Calabi-Yau dimension and Euler characteristic} Let $\Tt$
be a triangulated category with Serre duality. Let $S$ denote the
Serre functor of $\Tt$. Assume the existence of a smallest integer
$n\geq1$ such that we have an isomorphism $S^n\iso[m]$ of functors for
some integer $m$. (Here, $[m]$ denotes the $m$-fold suspension of
$\Tt$.) Then $\Tt$ is called \emph{Calabi-Yau} of fractional
CY-dimension $\frac{m}{n}$. Note that the ``fraction'' $\frac{m}{n}$
is kept in uncanceled format.
The bounded derived category $\Der{\coh\XX}$ of coherent sheaves on
$\XX(2,3,p)$ is almost never Calabi-Yau, the only exception being the
tubular case $p=6$, where we have fractional CY-dimension $6/6$. It is
therefore remarkable that the category $\snilop{p}=\svect\XX(2,3,p)$
is always fractional Calabi-Yau. Moreover, the CY-dimension only
depends on the Euler characteristic of $\XX$. To show this the next
lemma is useful.

\begin{lemma} \label{lemma:vom-mod-vx_1}
  Let $\LL=\LL(2,3,p)$. The class of $\vom$ in $\LL/\ZZ\vx_1$ is of
  order $\lcm(3,p)$. Moreover, the equality
  $$\lcm(3,p)\cdot\vom=\Bigl(\lcm(3,p)\cdot\frac{p-6}{3p}\Bigr)\cdot\vx_1$$
  holds in $\LL$.
\end{lemma}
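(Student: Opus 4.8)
The plan is to compute directly in the group $\LL=\LL(2,3,p)$ using the canonical form~\eqref{eq:normalform}. Recall $\vom=\vc-(\vx_1+\vx_2+\vx_3)$, and in $\LL$ we have $2\vx_1=3\vx_2=p\vx_3=\vc$. I would work modulo the subgroup $\ZZ\vx_1$; since $2\vx_1=\vc$, passing to $\LL/\ZZ\vx_1$ kills both $\vx_1$ and $\vc$, so that quotient is the abelian group on $\bar{\vx}_2,\bar{\vx}_3$ with relations $3\bar{\vx}_2=0=p\bar{\vx}_3$, hence isomorphic to $\ZZ/3\oplus\ZZ/p$. In this quotient $\bar\vom=-\bar{\vx}_2-\bar{\vx}_3$, and the order of $(-1,-1)$ in $\ZZ/3\oplus\ZZ/p$ is $\lcm(3,p)$; that gives the first claim.

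For the second claim I would determine the exact integer coefficient of $\vx_1$ in $\lcm(3,p)\cdot\vom$. Write $N=\lcm(3,p)$. Then
\begin{equation*}
N\vom = N\vc - N\vx_1 - N\vx_2 - N\vx_3.
\end{equation*}
Now use $N\vx_2 = (N/3)\vc$ (valid since $3\mid N$) and $N\vx_3=(N/p)\vc$ (valid since $p\mid N$), and then $\vc=2\vx_1$, to rewrite everything as a multiple of $\vx_1$:
\begin{equation*}
N\vom = \bigl(2N - N - \tfrac{2N}{3} - \tfrac{2N}{p}\bigr)\vx_1
      = N\bigl(1 - \tfrac{2}{3} - \tfrac{2}{p}\bigr)\vx_1
      = N\cdot\frac{p - 6}{3p}\cdot\vx_1,
\end{equation*}
which is exactly the asserted identity $\lcm(3,p)\cdot\vom=\bigl(\lcm(3,p)\cdot\frac{p-6}{3p}\bigr)\cdot\vx_1$. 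One should note that $N\cdot\frac{p-6}{3p}=\frac{N}{p}\cdot\frac{p-6}{3}$ is indeed an integer: $3\mid N$ forces $N/p$ to be a multiple of $3$ unless... actually it is cleaner to observe $2N-N-\frac{2N}{3}-\frac{2N}{p}$ is manifestly an integer since each summand is, so no divisibility subtlety arises at all.

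There is essentially no hard obstacle here; the only point requiring a little care is bookkeeping with the canonical element $\vc$ — one must consistently replace $\vx_2$-multiples and $\vx_3$-multiples by multiples of $\vc$ (using that $3$ and $p$ both divide $N$) before finally substituting $\vc=2\vx_1$, rather than trying to manipulate $\vx_1,\vx_2,\vx_3$ simultaneously. The first statement could alternatively be deduced from the identity in the second: since $\gcd\bigl(N\cdot\frac{p-6}{3p},\,?\bigr)$-type arguments are fiddly, I prefer the quotient-group argument for the order and the direct substitution for the identity, keeping the two parts logically independent.
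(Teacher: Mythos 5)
Your proposal is correct and follows essentially the same route as the paper: the identity $\lcm(3,p)\cdot\vom=\lcm(3,p)\bigl(1-\frac{2}{3}-\frac{2}{p}\bigr)\vx_1$ is exactly the paper's computation, and your determination of the order of $\vom$ by identifying $\LL/\ZZ\vx_1\iso\ZZ/3\oplus\ZZ/p$ is just a cleaner packaging of the paper's normal-form argument that $n\vom\in\ZZ\vx_1$ iff $3\mid n$ and $p\mid n$.
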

\begin{proof}
  Write $n\geq 1$ as $n=a\cdot p+b$ with $a,\,b\in\ZZ$ and $0\leq
  b<p$. Then we have
  $n\vom=n(\vx_1-\vx_2-\vx_3)=n\vx_1-n\vx_2-n\vx_3$, hence
  $n\vom\in\ZZ\vx_1$ if and only if $3\mid n$ and $p\mid n$, which is
  equivalent to $\lcm(3,p)\mid n$. This shows the first claim. The
  second follows from
  $\lcm(3,p)\cdot\vom=\lcm(3,p)\cdot\Bigl(1-\ffrac{2}{3}-\ffrac{2}{p}\Bigr)
  \cdot\vx_1$.
\end{proof}

\begin{proposition}
  The category $\snilop{p}$ is Calabi-Yau of fractional Calabi-Yau
  dimension $d_p$ given
  as follows:
  \begin{eqnarray*}
    d_2 & = & \frac{1}{3}\ \ (=1-2\cdot\eulerchar{\XX}),\\
    d_p & = & \frac{\lcm(3,p)\cdot
    (1-2\cdot\eulerchar{\XX})}{\lcm(3,p)},\ \text{for}\ p\geq 3.
  \end{eqnarray*}
  Here, $\eulerchar{\XX}=\ffrac{1}{p}-\ffrac{1}{6}=\ffrac{(6-p)}{6p}$ is
  the Euler characteristic of $\XX(2,3,p)$, and
  $1-2\cdot\eulerchar{\XX}=\ffrac{(4p-6)}{3p}$.
\end{proposition}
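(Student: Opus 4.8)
The plan is to identify the Serre functor of $\snilop p=\svect\XX$ with a single line bundle twist and then to extract its Calabi--Yau period from the arithmetic of $\LL=\LL(2,3,p)$, with Lemma~\ref{lemma:vom-mod-vx_1} supplying the decisive computation. First I would recall that on $\svect\XX(2,3,p)$ the suspension $[1]$ is the twist by $\vx_1$ (the Corollary above, from~\cite{Kussin:Lenzing:Meltzer:2010pre}), while the Auslander--Reiten translation $\tau$, induced from the Serre duality of $\coh\XX$, is the twist by $\vom$. Since in a Hom-finite triangulated category with Serre duality one has $S=\tau\circ[1]$ for the Serre functor $S$, it follows that $S$ is the twist by $\bx_1:=\vx_1+\vom$; hence $S^{\,n}$ is the twist by $n\bx_1$ and $[m]$ the twist by $m\vx_1$, and the whole question becomes: for which $n\ge1$ is the twist by $n\bx_1$ isomorphic to a twist by some $m\vx_1$?

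For $p\ge3$ I would argue that two line bundle twists are naturally isomorphic as autoequivalences of $\svect\XX$ exactly when the two elements of $\LL$ coincide: the Auslander bundles $E(\Oo(\vx))$ ($\vx\in\LL$) are pairwise non-isomorphic indecomposables of $\vect\XX$ without line bundle summand, so a twist isomorphic to the identity on $\svect\XX$ already forces $\Oo(\vx)\cong\Oo$, i.e.\ $\vx=0$. Granting this, $S^{\,n}\cong[m]$ amounts to $n\bx_1=m\vx_1$ in $\LL$, equivalently $n\vom\in\ZZ\vx_1$; by Lemma~\ref{lemma:vom-mod-vx_1} the least such $n$ is $\lcm(3,p)$, and then $\lcm(3,p)\vom=\lcm(3,p)\tfrac{p-6}{3p}\vx_1$, so that
$$\lcm(3,p)\,\bx_1=\lcm(3,p)\Bigl(1+\tfrac{p-6}{3p}\Bigr)\vx_1=\lcm(3,p)\,\tfrac{4p-6}{3p}\,\vx_1=\lcm(3,p)\,(1-2\eulerchar\XX)\,\vx_1 .$$
Thus $S^{\lcm(3,p)}\cong\bigl[\,\lcm(3,p)(1-2\eulerchar\XX)\,\bigr]$, an honest suspension power (the exponent is an integer by the same lemma), while no smaller power of $S$ is a power of $[1]$; this is precisely the asserted $d_p=\tfrac{\lcm(3,p)(1-2\eulerchar\XX)}{\lcm(3,p)}$, using $1-2\eulerchar\XX=\tfrac{4p-6}{3p}$.

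The case $p=2$ must be treated apart, and this is where the subtlety sits. For weight type $(2,3,2)$ the group $\LL$ has a nonzero $2$-torsion element, and the twist by it turns out to be isomorphic to the identity on $\svect\XX(2,3,2)$ (equivalently, the injectivity used in the previous paragraph fails: $E(\Oo)\cong E(\Oo(\vx_1-\vx_3))$ already in $\vect\XX$), so the $\LL$-action on $\svect\XX$ factors through $\LL/\mathrm{tors}\cong\ZZ$. Running the same computation modulo torsion, where $\bx_1$ and $\vx_1$ are carried by $\deg$ to $1$ and $3$, gives the least solution $n=3$, $m=1$ of $n\bx_1=m\vx_1$, hence $S^{3}\cong[1]$ and $d_2=\tfrac13$. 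Alternatively, and perhaps more cleanly, I would invoke the tilting object of Proposition~\ref{prop:nakayama}: for $p=2$ its endomorphism algebra $A(2(p-1),3)$ degenerates to the path algebra $k\mathbb{A}_2$, so $\snilop2\cong\Der{k\mathbb{A}_2}$, which is classically fractionally Calabi--Yau of dimension $\tfrac13$.

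The main obstacle I anticipate is precisely the reduction step, i.e.\ passing from ``equal as $\LL$-twists'' to ``naturally isomorphic as functors on $\svect\XX$'', which amounts to determining the kernel of the $\LL$-action on $\svect\XX$. For $p\ge3$ this kernel is trivial and the reduction is clean; but it is exactly this point that makes $p=2$ exceptional, where the kernel contains the torsion of $\LL$ and the period collapses from $\lcm(3,2)=6$ to $3$. A secondary, purely routine point is verifying the identity $1-2\eulerchar\XX=\tfrac{4p-6}{3p}$ and matching it against the two displayed table entries.
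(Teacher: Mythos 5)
Your proposal is correct and follows essentially the same route as the paper: identify the Serre functor with the twist by $\vx_1+\vom$, reduce $S^n\iso[m]$ to the arithmetic of $\LL$ via Lemma~\ref{lemma:vom-mod-vx_1} using faithfulness of the $\LL$-action (detected on Auslander bundles) for $p\geq3$, and treat $p=2$ separately because $E(L)\iso E(L(\vx_1-\vx_3))$ collapses the period from $6$ to $3$. Your extra remark that $\snilop{2}\iso\Der{\mmod{k\AA_2}}$ is a pleasant cross-check but not needed.
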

Note that the nominator of $d_p$ is always an integer.
\begin{proof}
  Assume first that $p\geq 3$. Then the Picard group $\LL=\LL(2,3,p)$
  acts faithfully on $\svect{\XX}$. Indeed, if $E$ is an Auslander
  bundle with $E(\vx)\simeq E$ in $\svect{\XX}$, then $p\geq 3$
  implies $\vx=0$. (In case $p\geq 3$ inspection of the AR components
  shows that for two line bundles $L,\,L'$ the corresponding Auslander
  bundles $E(L),\,E(L')$ are isomorphic if and only if $L$ and $L'$
  are.) Since shift by $\vx_1$ serves as suspension $[1]$ and the
  Serre functor on $\svect{\XX}$ is given by $S=\tau[1]$, it follows
  from the preceding lemma that the fractional Calabi-Yau dimension of
  $\svect{\XX}$ is given by $\frac{m+n}{n}$ with $n=\lcm(3,p)$ and
  $m=\lcm(3,p)\cdot\frac{p-6}{3p}$. For $p=2$ we have a similar
  formula, but in the resulting fraction $2/6$ the factor $2$ can be
  canceled, since in this case for two line bundles $L,\,L'$ the
  corresponding Auslander bundles $E(L),\,E(L')$ are isomorphic if and
  only if $L'\simeq L$ or $L'\simeq L(\vx_1 -\vx_3)$.
\end{proof}

\begin{corollary}
  The category $\snilop{p}$ determines $\coh\XX$. \qed
\end{corollary}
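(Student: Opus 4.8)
The plan is to recover the weight $p$ from the triangulated structure of $\snilop{p}$ alone, and then to invoke that $\coh\XX$ is determined up to equivalence by the weight type $(2,3,p)$.

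First I would use that $\snilop{p}$ is triangulated with Serre duality, so that any triangle equivalence $\snilop{p}\simeq\snilop{q}$ automatically intertwines the Serre functors and the suspensions. Hence the fractional Calabi-Yau dimension of the preceding proposition, being defined purely in terms of the Serre functor and the suspension, is an invariant of $\snilop{p}$ as a triangulated category. By that proposition this invariant equals, as a rational number, $1-2\eulerchar\XX=\frac{4p-6}{3p}=\frac43-\frac2p$.

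Next I would observe that $p\mapsto\frac43-\frac2p$ is strictly increasing on the integers $p\geq2$, hence injective; so the integer $p$, and with it the weight type $(2,3,p)$, is recovered from $\snilop{p}$. Finally, by the ($\LL$-graded) Serre construction of~\cite{Geigle:Lenzing:1987} the category $\coh\XX$ is determined up to equivalence by its weight type, which completes the argument.

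I do not expect any serious obstacle here: the only point requiring a word of care is that the Calabi-Yau datum (the smallest $n\geq1$ with $S^n\iso[m]$, together with the corresponding $m$) is genuinely a triangle-invariant, which is immediate from the uniqueness of the Serre functor; and the monotonicity of $p\mapsto\frac43-\frac2p$ is elementary. In effect the statement is a direct corollary of the previous proposition together with~\cite{Geigle:Lenzing:1987}.
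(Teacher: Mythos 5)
Your argument is correct and is precisely the one the paper intends: the corollary is stated with an immediate \qed because the fractional Calabi--Yau dimension from the preceding proposition, a triangle invariant, equals $1-2\eulerchar{\XX}=\frac{4}{3}-\frac{2}{p}$ as a rational number and hence recovers $p$, and $\coh\XX$ is determined by the weight type $(2,3,p)$. No discrepancy with the paper's approach.
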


For distinction, we denote the class of a vector bundle $X$ in $\Knull{\svect\XX}$ by $[[X]]$, and use $\seuler{[[X]]}{[[Y]]}=\sum_{n\in\ZZ}\dim{\sHom{}{X}{Y[n]}}$ for the \emph{Euler form}.

\begin{lemma} \label{lemma:stable:eulerform}
We assume weight type $(2,3,p)$ with $p\geq3$. Let $E$ be an Auslander bundle. Let $h=\lcm(6,p)$ and $A=\set{0,\bx_1,\bx_2,\bx_3}$. Then the following holds:

(i) We have $\seuler{[[E]]}{[[E(j\vom)]]}=1$ if and only if $j\vom\in A$ modulo $\ZZ\vc$.

(ii) We have $\seuler{[[E]]}{[[E(j\vom)]]}=-1$ if and only if $j\vom-\vx_1\in A$ modulo $\ZZ\vc$.

(iii) Assume $1\leq j < h$, then $[[E(j\vom)]]\neq[[E]]$.

(iv) Assume $p\geq4$ is even. Then $[[E(\frac{h}{2}\vom)]]\neq-[[E]]$.
\end{lemma}

\begin{proof}
Concerning (i) assume first that $j\vom$ is congruent to $\vy\in A$ modulo $\ZZ\vc$. Then $[[E]]=[[E(\vy)]]$ and $\seuler{[[E]]}{[[E\vy]]}=1$ by Proposition~\ref{prop:stable_morphisms}. Conversely, assume $\seuler{[[E]]}{[[E(j\vom)]]}>0$. By Proposition~\ref{prop:stable_morphisms} there exists an integer $n_0$ such that $\vy=j\vom+n_0\vx_1$ belongs to $A$ and, moreover, $j\vom +n\vx_1\notin A$ for all integers $n\neq n_0$. Since $E(n\vx_1)=E[n]$, we obtain $\seuler{[[E]]}{[[E(j\vom)]]}=\seuler{[[E]]}{[[E(\vy)]]}=1$.
The proof of (ii) is similar to the previous one. Passing to claim (iii), assume $[[E(j\vom)]]=[[E]]$ and then $\seuler{[[E]]}{[[E(j\vom)]]}=\seuler{[[E]]}{[[E]]}=1$. By (i) we obtain that $j\vom\in A$ modulo $\ZZ\vc$. By the assumption on $j$ this excludes the possibility $j\vom=0$ modulo $\ZZ\vc$, hence $j\vom=\bx_i+n\vc$ for some $i=1,2,3$ and $n\in\ZZ$. Since up to a common degree shift $E$ and $E(\bx_i)$ belong to the tilting object from Theorem~\ref{thm:tilting} we conclude that  $[[E(\bx_i)]]\neq[[E]]$, thus contradicting our assumption.
\end{proof}

Recall that the \emph{Coxeter transformation} of a triangulated
category $\Tt$ with Serre duality is the automorphism of the
Grothendieck group of $\Tt$ induced by the Auslander-Reiten
translation $\tau=S[-1]$, where $S$ denotes the Serre functor for
$\Tt$.
From Lemma~\ref{lemma:vom-mod-vx_1} we then deduce the following,
see~\cite{Kussin:Lenzing:Meltzer:2010pre}.

\begin{proposition}
  The Coxeter transformation $\phi$ of $\snilop{p}=\svect\XX(2,3,p)$
  has order $h=3$
  for $p=2$ and order $h=\lcm(6,p)$ otherwise. Moreover, assuming
  $p\geq3$ we have
  $\phi^{h/2}=-1$ if and only if $p$ is odd. \qed
\end{proposition}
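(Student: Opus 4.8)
The plan is to transfer the statement, via the tilting object of Proposition~\ref{prop:rectangle:tilting}, to a purely combinatorial computation with a Coxeter matrix. Recall that $\phi$ is the automorphism of $\Knull{\svect\XX}$ induced by the Auslander--Reiten translation $\tau$, which for $\XX(2,3,p)$ is the twist by $\vom$, and that the suspension $[1]$ is the twist by $\vx_1$ and so acts as $-\mathrm{id}$ on $\Knull{\svect\XX}$. Since $\svect\XX$ has a tilting object $T$ with $\End{}{T}=B:=B(2,p-1)$, and $B$ (an incidence algebra of a poset) has finite global dimension, the derived equivalence $\svect\XX\iso\Der{\mmod B}$ identifies $\Knull{\svect\XX}$ with $\Knull{\mmod B}$ and $\phi$ with the Coxeter matrix $\Phi_B=-(C_B^{-1})^{t}C_B$ of $B$, where $C_B$ is the Cartan matrix. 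As $B$ is the incidence algebra of $[1,2]\times[1,p-1]$, we have $C_B=C_{[1,2]}\otimes C_{[1,p-1]}$ for the Cartan (zeta) matrices of the two chains, whence a one-line manipulation gives $\Phi_B=-\bigl(\Phi_{[1,2]}\otimes\Phi_{[1,p-1]}\bigr)$. Since the Coxeter matrix of a chain $[1,n]$ is diagonalizable with eigenvalues the nontrivial $(n{+}1)$-st roots of unity, I get: the eigenvalues of $\phi$ are exactly the numbers $\exp\!\bigl(2\pi i(\tfrac12+\tfrac a3+\tfrac jp)\bigr)$ for $a\in\{1,2\}$ and $j\in\{1,\dots,p-1\}$, and $\phi$ is diagonalizable.

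With this in hand, $h=\mathrm{ord}(\phi)$ is the least common multiple of the additive orders of the exponents $\tfrac12+\tfrac a3+\tfrac jp$ in $\mathbb{Q}/\mathbb{Z}$. Each summand has order dividing $2$, $3$, $p$, so $h\mid\lcm(2,3,p)=\lcm(6,p)$; the same upper bound drops out directly from Lemma~\ref{lemma:vom-mod-vx_1}, which gives $\phi^{\lcm(3,p)}=(-1)^{m}\mathrm{id}$ with $m=\lcm(3,p)\cdot\tfrac{p-6}{3p}$, together with the easy congruence $m\equiv p\pmod 2$. For the reverse divisibility I need an eigenvalue of order exactly $\lcm(6,p)$: fixing $a=1$, this asks for $j\in\{1,\dots,p-1\}$ with $\gcd(5p+6j,\,6p)=\gcd(6,p)$, and a prime-by-prime check --- choosing the parity of $j$ according to the $2$-adic valuation of $p$, and $j$ coprime to the odd prime divisors of $p$ (possible by the Chinese remainder theorem for $p\geq3$) --- produces such a $j$. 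Hence $h=\lcm(6,p)$ for $p\geq3$. For $p=2$ the factor $[1,p-1]$ is a single point, so $\phi=\Phi_{[1,2]}$ already has order $3$, a proper divisor of $\lcm(6,2)=6$; this accounts for the exceptional value.

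For the last assertion, assume $p\geq3$ and write $h\theta=\tfrac h2+\tfrac h3 a+\tfrac hp j$ for an exponent $\theta$ as above; then $\phi^{h/2}=-\mathrm{id}$ iff $h\theta$ is odd for every admissible $(a,j)$. Here $\tfrac h3$ is always even and $\tfrac hp=6/\gcd(6,p)$. If $p$ is odd then $\tfrac h2=\lcm(3,p)$ is odd while $\tfrac hp$ is even, so $h\theta$ is odd for all $(a,j)$ and $\phi^{h/2}=-\mathrm{id}$. If $p$ is even then $\tfrac hp$ is odd, so the parity of $h\theta$ equals that of $\tfrac h2+j$, which varies with $j$; thus $\phi^{h/2}$ has both $+1$ and $-1$ as eigenvalues, so it is neither $-\mathrm{id}$ nor $\mathrm{id}$ (this also reconfirms $\mathrm{ord}(\phi)=h$). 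This proves the claim. The main obstacle in the argument is the production of an eigenvalue of maximal order, i.e.\ the divisibility $\lcm(6,p)\mid h$; the rest is bookkeeping, and an independent derivation is indicated in \cite{Kussin:Lenzing:Meltzer:2010pre}.
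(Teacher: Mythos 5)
Your argument is correct, and it takes a genuinely different route from the one the paper (implicitly) follows. The paper deduces the proposition from Lemma~\ref{lemma:vom-mod-vx_1}: since the suspension of $\svect\XX$ is the twist by $\vx_1$ and $\tau$ is the twist by $\vom$, the relation $\lcm(3,p)\vom=m\vx_1$ with $m=\lcm(3,p)\cdot\frac{p-6}{3p}$ yields $\phi^{\lcm(3,p)}=(-1)^m=(-1)^p$ on $\Knull{\svect\XX}$ --- precisely your ``upper bound'' remark --- and the remaining verifications (the exact order, and the failure of $\phi^{h/2}=-1$ for even $p$) are deferred to \cite{Kussin:Lenzing:Meltzer:2010pre}. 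You instead pass through the tilting object of Proposition~\ref{prop:rectangle:tilting} and use the Kronecker factorization $C_B=C_{[1,2]}\otimes C_{[1,p-1]}$ of the Cartan matrix of the rectangle poset, which reduces everything to a self-contained spectral computation; this buys you the full spectrum of $\phi$ (hence the Coxeter polynomial), and your eigenvalue list correctly reproduces the Coxeter numbers $3,6,12,30$ of $\AA_2,\DD_4,\EE_6,\EE_8$ for $p=2,3,4,5$ (incidentally, the entry $24$ for $p=4$ in the ADE-chain table must be a misprint for $12=\lcm(6,4)$). The one under-justified step is the ``prime-by-prime check'' producing a single eigenvalue of exact order $\lcm(6,p)$: the congruence conditions at $2$ and $3$ are more delicate than ``$j$ coprime to the odd prime divisors of $p$'' (for $p=3$ one must take $j=1$, not $j=2$; when $3$ divides $p$ exactly once the condition at $3$ reads $3\nmid(p/3+j)$), and one should also verify that a solution lies in $\{1,\dots,p-1\}$. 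This step can be bypassed entirely: since $\phi$ is diagonalizable, its order equals the order of the subgroup of $\mathbb{Q}/\ZZ$ generated by all exponents $\tfrac{1}{2}+\tfrac{a}{3}+\tfrac{j}{p}$, and for $p\geq3$ this subgroup contains the differences $\tfrac{1}{3}$ and $\tfrac{1}{p}$ as well as $\tfrac{1}{2}+\tfrac{1}{3}+\tfrac{1}{p}$, hence is all of $\tfrac{1}{\lcm(6,p)}\ZZ/\ZZ$, giving $h=\lcm(6,p)$ at once. Your parity bookkeeping for $\phi^{h/2}$, including the observation that for even $p$ both $+1$ and $-1$ occur as eigenvalues of $\phi^{h/2}$, is accurate.
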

Note that, in classical situations, $h$ is called the \emph{Coxeter
  number}, a nomination which we extend to the present context.

\begin{proof}
Assume $p\geq3$ such that $h=\lcm(6,p)$. We have $h\vom=\delta(\vom)\vc$ such that $\tau^h=[2\delta(\vom)]\vc$. Passing to the Grothendieck group of $\svect\XX$ we obtain $\phi^h=1$. By Theorem~\ref{thm:tilting} there is a system of Auslander bundles whose classes form a $\ZZ$-basis of $\Knull{\svect\XX}$. Then Lemma~\ref{lemma:stable:eulerform} (iii) implies that $h$ is the precise order of $\phi$. Assuming $p$ odd, then Lemma~\ref{lemma:vom-mod-vx_1} implies that $\phi^{h/2}=-1$. Moreover, Lemma~\ref{lemma:stable:eulerform}~(iv) shows that this is not the case for $p$ even. \end{proof}

\subsection*{Shape of the categories $\nilop{p}$ and $\snilop{p}$}

In this subsection we show how the structural results
of~\cite{Ringel:Schmidmeier:2008b} for $\nilop{p}$ and $\snilop{p}$,
in particular the assertions of the shape of Auslander-Reiten
components, follow
from Theorem~C. The shape of the results depends sensibly on the
(orbifold) Euler characteristic  $\chi_\XX=1/p-1/6$ of
$\XX(2,3,p)$. Note that $\chi_\XX$ is $>0, =0, <0$ if
and only if $p<6$, $p=6$ or $p>6$, respectively.

The Auslander-Reiten components of $\vect\XX/[\Ff]$ or of
$\svect\XX=\vect\XX/[\Ll]$ are those from $\vect\XX$ with all line
bundles from $\Ff$ (resp.\ $\Ll$) removed. By transport of structure
this allows to determine the Auslander-Reiten structure of $\nilop{p}$
and $\snilop{p}$, thus obtaining the corresponding results of
\cite{Ringel:Schmidmeier:2008b}. We remark that it may be deduced from Proposition~\ref{prop:simple:Auslander} under the functor $\Phi$ the Auslander bundles correspond exactly to the boundary modules from \cite[Section 5.1]{Ringel:Schmidmeier:2008b}.

\subsubsection*{Fundamental domain under shift}
It is shown in~\cite{Ringel:Schmidmeier:2008b} that identification
$E=E(\vx_3)$ yields for $p\leq 5$ the ungraded invariant subspace
problem $S(p)$. More explicitly, with $\XX=\XX(2,3,p)$ we have a
covering functor $\vect\XX/[\Ff]=\nilop{p}\ra \Ss(p)$ with infinite
cyclic covering group $G$ generated by the degree shift
$\si_3=\si(\vx_3)$ with $\vx_3$. In the next
proposition we describe explicitly a fundamental domain in
$\vect\XX/[\Ff]$ with respect to this $G$-action. From
\cite{Ringel:Schmidmeier:2008b} one obtains a full embedding of the
orbit category  $(\vect\XX/[\Ff])/G\hookrightarrow\nilop{p}$. It is
shown in~\cite{Ringel:Schmidmeier:2008b} that for $p\leq6$ this
embedding is actually an equivalence. It is conjectured that for
$p\geq 7$  the above embedding is not dense.

To describe a fundamental domain with respect to the $G$-action, we
recall from \cite{Geigle:Lenzing:1987} that the slope of a vector
bundle $X$ is defined by $\mu X=\deg{X}/\rank{X}$, where the degree
deg is the linear form on $\Knull{\coh\XX}$ which is uniquely determined by
$\deg{\Oo(\vx)}=\de(\vx)$ and where $\de\colon\LL\ra\ZZ$ is the
homomorphism sending $\vx_1$, $\vx_2$, $\vx_3$ to $\lcm(6,p)/2$,
$\lcm(6,p)/3$, $\lcm(6,p)/p$, respectively.

\begin{proposition} \label{prop:fundamental:domain} Let $\XX$ be of
  weight type $(2,3,p)$, $p\geq3$. Then the following holds:
\begin{enumerate}
 \item[(i)] The indecomposable bundles $X$ not in $\Ff$ having slope
 in the range $0\leq \mu X< \de(\vx_3)$ form a fundamental domain
 $\Dd$ of $\vect\XX/[\Ff]$ with respect to the
 $\langle\si_3\rangle$-action.
 \item[(ii)] There are exactly $6$ line bundles $L$ with slope in the
 range $0\leq \slope{L}<\de(\vx_3)$.
 \item[(iii)] $\Dd$ contains exactly two (persistent) line bundles,
 one of them from the upper bar the other one from the lower bar.
 \item[(iv)] $\Dd$ contains exactly $6$ Auslander bundles.
\end{enumerate}
\begin{proof}
  Assertion (i) follows from the formula
  $\mu(E(\vx_3))=\mu\,E+\de(\vx_3)$. For (ii) we recall that
  $\ZZ\vx_3$ has index $6$ in $\LL$. Moreover, each $\langle
  \si_3\rangle$-orbit $\Set{L(n\vx_3)}{n\in\ZZ}$ contains exactly one
  line bundle in the given slope range. Assertion (iii) amounts to
  determine all $\vx$ of shape $n\vx_3$ or $\vx_2+n\vx_3$ satisfying
  $0\leq\de(\vx)<\de(\vx_3)$. Claim (iv) is a direct consequence of
  (ii).
\end{proof}
\end{proposition}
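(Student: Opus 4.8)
The statement to be established is Proposition~\ref{prop:fundamental:domain}, for which a proof is in fact already attached in the excerpt; what follows is how I would organize and verify it. The plan is to work entirely inside $\vect\XX$ (respectively $\vect\XX/[\Ff]$) and to exploit the slope function $\mu$ together with the normalization $\de(\vx_3)=\lcm(6,p)/p$ chosen above. The running principle is that the automorphism $\si_3$ shifts slopes by a fixed positive amount, so that ``slope in a half-open interval of length $\de(\vx_3)$'' cuts out exactly one representative of each $\langle\si_3\rangle$-orbit.

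\textbf{Step (i).} First I would record the key identity $\mu(E(\vx_3))=\mu(E)+\de(\vx_3)$ for every nonzero bundle $E$; this is immediate from additivity of $\deg$ and $\rank$ on the line-bundle twist, since $\rank(E(\vx_3))=\rank(E)$ and $\deg(E(\vx_3))=\deg(E)+\rank(E)\cdot\de(\vx_3)$. Because $\si_3$ preserves the partition $\Ll=\Pp\sqcup\Ff$ (twisting by $\vx_3$ fixes the cosets $\PP$ and $\FF$ of $\LL/\ZZ\vx_3$ setwise, by Lemma~\ref{lemma:cyclic}) and preserves indecomposability, the $\langle\si_3\rangle$-action is free on isomorphism classes of non-fading indecomposables; the slope identity then shows that each orbit meets the slab $0\le\mu X<\de(\vx_3)$ in exactly one point. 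Hence $\Dd$ is a fundamental domain. The only subtlety to check is that slope is $\si_3$-equivariant in the stated additive way also after passing to $\vect\XX/[\Ff]$, which is clear since $\mu$ is defined on $\Knull\XX$ and $[\Ff]$ does not affect objects.

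\textbf{Steps (ii)--(iv).} For (ii), I would use that $\ZZ\vx_3$ has index $6$ in $\LL$ (Lemma~\ref{lemma:cyclic}), so the line bundles fall into $6$ orbits under $\langle\si_3\rangle$, each contributing exactly one line bundle to the slab by the argument of (i); thus there are exactly $6$. For (iii), among those $6$ orbits the persistent ones are precisely the two cosets $\ZZ\vx_3$ and $\vx_2+\ZZ\vx_3$ forming $\PP$, so $\Dd$ contains exactly two line bundles, one $\Oo(n\vx_3)$ (upper bar) and one $\Oo(\vx_2+n\vx_3)$ (lower bar); one then pins down the integer $n$ in each case by solving $0\le\de(n\vx_3)<\de(\vx_3)$ and $0\le\de(\vx_2+n\vx_3)<\de(\vx_3)$, a one-line computation using $\de(\vx_2)=\lcm(6,p)/3$. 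Claim (iv) follows from (ii): each Auslander bundle $E(L)$ has $\mu(E(L))=\mu(L)+\tfrac12\de(\vx_1+\vx_2+\vx_3-\vc)$ differing from $\mu(L)$ by a constant independent of $L$ (indeed $\mu(E(L))=\mu(L)+\tfrac12\deg\vom$ adjusted by rank $2$), so the $6$ orbits of line bundles correspond bijectively to $6$ orbits of Auslander bundles meeting the slab, giving exactly $6$ Auslander bundles in $\Dd$; here one uses that for $p\ge3$ distinct line bundles give distinct Auslander bundles (as already noted in the Calabi-Yau discussion via inspection of AR components).

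\textbf{Main obstacle.} The genuinely delicate point is (i): one must be sure that the slab really contains a \emph{complete} set of orbit representatives and no repetitions, which rests on the freeness of the $\si_3$-action on non-fading indecomposables. For line bundles and Auslander bundles this is transparent, but for higher-rank indecomposables it is exactly the statement that $E(\vx_3)\not\simeq E$ for every indecomposable $E\notin\Ff$; this is where $p\ge3$ is used and where one invokes the structure of the AR components of $\vect\XX$ from \cite{Kussin:Lenzing:Meltzer:2010pre}. Everything else is bookkeeping with the linear form $\de$.
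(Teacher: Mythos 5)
Your proposal is correct and follows essentially the same route as the paper: the slope identity $\mu(E(\vx_3))=\mu E+\de(\vx_3)$ for (i), the index-$6$ subgroup $\ZZ\vx_3\le\LL$ for (ii), the two persistent cosets $\ZZ\vx_3$ and $\vx_2+\ZZ\vx_3$ for (iii), and the line-bundle/Auslander-bundle correspondence for (iv). One small simplification: the freeness of the $\langle\si_3\rangle$-action on non-fading indecomposables does not need the AR-component structure from \cite{Kussin:Lenzing:Meltzer:2010pre} --- it is already forced by the slope formula, since $E(n\vx_3)\iso E$ with $n\neq0$ would give $\mu E=\mu E+n\,\de(\vx_3)$ with $\de(\vx_3)>0$; the hypothesis $p\ge3$ enters only in (iv), exactly where you use it.
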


\subsubsection*{Positive Euler characteristic}
This deals with the cases $p=2,3,4$ and $5$. Note that the treatment
is related to \cite{KST-1}, but except for $p=5$ deals with a
different situation.
\begin{proposition}
For $2\leq p\leq 5$ let $\De=[2,3,p]$ (resp.\ $\tilde\De$) be the
attached Dynkin (resp.\ extended Dynkin) diagram.
\begin{enumerate}
\item[(1)]The Auslander-Reiten quiver of $\vect\XX$ consists of a
  single standard component. The category of indecomposable vector
  bundles on $\XX$ is equivalent to the mesh category of the
  Auslander-Reiten component $\ZZ\tilde\Delta$. (The vertices
  corresponding to persistent (fading) vector bundles will be called
  persistent (fading).)
\item[(2)]The Auslander-Reiten quiver $\Ga$ of
  $\vect\XX/[\Ff]=\nilop{p}$ consists of
  a single component. It is obtained from the translation quiver
  $\ZZ\tilde\Delta$ by deleting the fading vertices and adjacent
  arrows. The category of indecomposable objects of $\nilop{p}$ is
  equivalent to the mesh-category of $\Ga$.
\item[(3)]  The category $\svect\XX=\snilop{p}$ is equivalent to
  $\Der{\mmod{K\vec{\De_p}}}$ for some quiver $\vec\De_p$ with
  underlying Dynkin graph $\De_2=\AA_2$, $\De_3=\DD_4$, $\De_4=\EE_6$
  and $\De_5=\EE_8$.
  \end{enumerate}
\end{proposition}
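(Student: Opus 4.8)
The plan is to prove the three parts in sequence, deducing~(2) from~(1) by the deletion principle recorded just above the proposition and reserving the real work for~(3). For part~(1), I would use that in the domestic range $\chi_\XX>0$ (i.e.\ $p\leq 5$) the exact category $\vect\XX$ is representation-directed: transporting the Auslander--Reiten structure of $\coh\XX$, which is derived-equivalent to the tame hereditary algebra $K\vec{\tilde\Delta}$ of extended Dynkin type $\tilde\Delta$, to $\vect\XX$ shows that the indecomposable bundles (with almost split sequences $0\to L(\vom)\to E(L)\to L\to 0$) make up a single connected component of shape $\ZZ\tilde\Delta$; standardness is automatic for a directed component, so this component is its own mesh category. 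This is essentially the classification of~\cite{Lenzing:Meltzer:1992,Kussin:Lenzing:Meltzer:2010pre}, which I would cite. For part~(2), the principle stated before the proposition gives the Auslander--Reiten quiver $\Gamma$ of $\vect\XX/[\Ff]$ as $\ZZ\tilde\Delta$ with the fading line-bundle vertices and their adjacent arrows deleted; since $[\Ff]$ is exactly the ideal of maps through these projective--injective objects, the distinguished exact sequences descend to the almost split sequences of the quotient, so $\nilop{p}$ is the mesh category of $\Gamma$, and Theorem~C transports this to $\nilop{p}$.

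Part~(3) is where care is needed. The key input is that $\svect\XX=\snilop{p}$ carries a tilting object $T$ by Proposition~\ref{prop:nakayama} (or~\ref{prop:rectangle:tilting}) whose endomorphism ring $\Lambda$ --- the Nakayama algebra $A(2(p-1),3)$, respectively the incidence algebra $B(2,p-1)$ --- has finite global dimension; the tilting theorem then yields a triangle equivalence $\svect\XX\simeq\Der{\mmod{\Lambda}}$. I would next argue that in the domestic case $\svect\XX$ is a standard, Hom-finite, Krull--Schmidt triangulated category with Serre duality whose Auslander--Reiten quiver --- obtained from part~(1) by further deleting \emph{all} line-bundle vertices --- is of the form $\ZZ\De_p$ for a finite connected graph $\De_p$. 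Such a category is piecewise hereditary of Dynkin type, so $\Der{\mmod{\Lambda}}\simeq\Der{\mmod{K\vec{\De_p}}}$ with $\De_p$ Dynkin; it remains only to name $\De_p$.

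The type is fixed by two invariants. First, $\rank\Knull{\snilop{p}}=2(p-1)$ from Corollary~\ref{cor:Knull} is the number of vertices of $\De_p$, namely $2,4,6,8$ for $p=2,3,4,5$; this by itself does not separate, say, $\AA_4$ from $\DD_4$. Second, the order $h$ of the Coxeter transformation computed earlier ($h=3$ for $p=2$ and $h=\lcm(6,p)$, i.e.\ $6,12,30$, otherwise) must equal the Coxeter number of $\De_p$. Comparing with the Coxeter numbers of the simply-laced candidates of the given ranks ($\AA_2\!:3$; $\AA_4\!:5,\ \DD_4\!:6$; $\AA_6\!:7,\ \DD_6\!:10,\ \EE_6\!:12$; $\AA_8\!:9,\ \DD_8\!:14,\ \EE_8\!:30$) singles out $\De_2=\AA_2$, $\De_3=\DD_4$, $\De_4=\EE_6$, $\De_5=\EE_8$. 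I expect the main obstacle to be the self-contained structural step in part~(3): verifying that deleting all line bundles from $\ZZ\tilde\Delta$ really produces a translation quiver of the form $\ZZ\De_p$ with $\De_p$ Dynkin and that $\svect\XX$ is standard --- that is, upgrading ``has a tilting object of finite global dimension'' to ``is the derived category of a Dynkin hereditary algebra.'' Once that is in place, the rank and Coxeter-number computations, both already available, make the identification of $\De_p$ immediate.
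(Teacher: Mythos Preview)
Your proposal for parts~(1) and~(2) is close to the paper's, which likewise reduces (2) to (1) via Theorem~C and defers the structural statement about $\vect\XX$ to~\cite{Kussin:Lenzing:Meltzer:2010pre}. The paper's specific device for~(1) is slightly different, however: rather than invoking directedness abstractly, it observes that the direct sum of all indecomposable bundles with slope in the range $0\leq\slope{E}<-\de(\vom)$ is a tilting object $T$ for $\coh\XX$, from which the mesh description of $\vect\XX$ follows.

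For part~(3) your route genuinely diverges from the paper's. The paper does \emph{not} go through the Nakayama or rectangle tilting objects, nor through rank and Coxeter-number invariants. Instead it reuses the slice tilting object $T$ just constructed: the indecomposable summands of $T$ that are \emph{not} line bundles form a tilting object $T'$ for $\svect\XX$, and its endomorphism ring is directly computed (again with details deferred to~\cite{Kussin:Lenzing:Meltzer:2010pre}) to be the hereditary path algebra of the stated Dynkin quiver. This sidesteps precisely the step you flag as the obstacle---proving that $\svect\XX$ is standard with Auslander--Reiten quiver $\ZZ\De_p$ and then invoking a recognition theorem for piecewise hereditary categories. Your approach is internally consistent and economical in that it recycles Propositions~\ref{prop:nakayama} and~\ref{prop:rectangle:tilting} and the Coxeter computation, all already available; its cost is the extra structural machinery (standardness, the piecewise-hereditary recognition step) which you correctly identify as needing justification. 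The paper's approach trades this for a case analysis hidden in the endomorphism-ring computation for $T'$, but produces the Dynkin type by direct inspection rather than by matching numerical invariants.
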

\begin{proof}
We only sketch the argument, for further details we refer
to~\cite{Kussin:Lenzing:Meltzer:2010pre}. One first shows that the
direct sum of all indecomposable bundles $E$ with slope in the range
$0\leq \slope{E}< -\de(\vom)$ yields a tilting object $T$ for
$\coh\XX$. This allows to prove assertion (1). Assertion (2) then
follows from (1) using Theorem~C. For (3) we use that the
indecomposable summands of $T$ which are not line bundles yield a
tilting object $T'$ for $\svect\XX$ whose endomorphism ring is as
described in (3).
\end{proof}
By way of example we treat the cases $\nilop{4}$ and $\nilop{5}$. In
Figure~\ref{figure:234} we illustrate a fundamental domain in the
Auslander-Reiten quiver of $\nilop{4}$ modulo the shift action by
$\mathbb{Z}\vx_3$. The line bundles are the objects at the upper and
lower boundary of the graph. In the following figures the fading line bundles
are indicated by circles and the adjacent (fading) arrows are
dotted. All other objects (in particular the persistent line bundles)
are marked by fat points.
\begin{figure}[H]
$$
\def\c{\circ} \def\b{\bullet}
\xymatrix@R8pt@C8pt{ L' & \c\ar@{.>}[rd]
  \ar @{-}[dddddd] & &\b\ar[rd] &
  &\c\ar@{.>}[rd]  &                       &\b \ar @{-}[dddddd] & L(\vx_3)\\
  & &\b\ar[ru]\ar[rd] & &*+[F]{\b}\ar@{.>}[ru]\ar[rd] & &\b\ar[ru]\ar[rd] &
  \\
  & \b\ar[ru]\ar[rd]& &*+[F]{\b}\ar[ru]\ar[rd]&
  &\b\ar[ru]\ar[rd]&                       &\b & \\
  & \b\ar[r] &\b\ar[ru]\ar[rd]\ar[r]&*+[F]{\b}\ar[r]
  &*+[F]{\b}\ar[ru]\ar[rd]\ar[r]&\b\ar[r]
  &\b\ar[ru]\ar[rd]\ar[r]&\b\\
  & \b\ar[ru]\ar[rd]& &*+[F]{\b}\ar[ru]\ar[rd]&
  &\b\ar[ru]\ar[rd]&                       &\b & \\
  & &\b\ar[ru]\ar@{.>}[rd] & &*+[F]{\b}\ar[ru]\ar@{.>}[rd] & &\b\ar[ru]\ar@{.>}[rd] &
  & \\
  L & \b\ar[ru] & &\c\ar@{.>}[ru] & &\c\ar@{.>}[ru] & &\c & L' (\vx_3)}
$$
\caption{Fundamental domain for $\snilop{4}$}\label{figure:234}
\end{figure}
\noindent We have marked the indecomposable summands of a tilting object for $\snilop{4}$ with endomorphism ring of Dynkin type $\EE_6$.

In Figure~\ref{figure:235} we illustrate a fundamental domain in the
Auslander-Reiten quiver of $\nilop{5}$ modulo the shift action by
$\mathbb{Z}\vx_3$. Here the line bundles are the objects at the lower
boundary of the quiver.
\begin{figure}[H]
$$
\def\c{\circ}
\def\b{\bullet}
\xymatrix@R7pt@C7pt{
& \ar @{-}[ddddddd] &\b\ar[rd] &                       &\b\ar[rd]       &
&\b\ar[rd]       &                       &\b\ar[rd]       &
& *+[F]{\b}\ar[rd]       &                       &\b\ar[rd]       & \ar
@{-}[ddddddd] & \\
&
\b\ar[ru]\ar[rd]&&\b\ar[ru]\ar[rd]&&\b\ar[ru]\ar[rd]&&\b\ar[ru]\ar[rd]&&*+[F]{\b}\ar[ru]\ar[rd]&&\b\ar[ru]\ar[rd]&&\b
&\\
& \b\ar[r] &\b\ar[ru]\ar[rd]\ar[r]  &\b\ar[r]
&\b\ar[ru]\ar[rd]\ar[r] &\b\ar[r]
&\b\ar[ru]\ar[rd]\ar[r]&\b\ar[r] &\b\ar[ru]\ar[rd]\ar[r]&*+[F]{\b}\ar[r]
&*+[F]{\b}\ar[ru]\ar[rd]\ar[r] &\b\ar[r]         &\b\ar[ru]\ar[rd]\ar[r]
&\b &\\
& \b\ar[ru]\ar[rd]       &                &\b\ar[ru]\ar[rd]       &
&\b\ar[ru]\ar[rd]       &                &\b\ar[ru]\ar[rd]       &
&*+[F]{\b}\ar[ru]\ar[rd]&                &\b\ar[ru]\ar[rd]       &
&\b &\\
 & & \b\ar[ru]\ar[rd]&                       &\b\ar[ru]\ar[rd]&
 &\b\ar[ru]\ar[rd]&                       &\b\ar[ru]\ar[rd]&
 & *+[F]{\b}\ar[ru]\ar[rd]&                       &\b\ar[ru]\ar[rd]& &\\
& \b\ar[ru]\ar[rd]       &                &\b\ar[ru]\ar[rd]       &
&\b\ar[ru]\ar[rd]       &                &\b\ar[ru]\ar[rd]       &
&*+[F]{\b}\ar[ru]\ar[rd]&                &\b\ar[ru]\ar[rd]       &
&\b &\\
 & &\b\ar[ru]\ar@{.>}[rd]&                       &\b\ar[ru]\ar@{.>}[rd]&
 &\b\ar[ru]\ar@{.>}[rd]&                       &\b\ar[ru]\ar[rd]&
 &*+[F]{\b}\ar[ru]\ar@{.>}[rd]&                       &\b\ar[ru]\ar[rd]& &\\
L & \b\ar[ru]& &\c\ar@{.>}[ru]       &                       &\c\ar@{.>}[ru]
 &                       &\c\ar@{.>}[ru]       &
 &\b\ar[ru]       &                       &\c\ar@{.>}[ru]       &
 &\b & L(\vx_3)}
$$
\caption{Fundamental domain for $\nilop{5}$}\label{figure:235}
\end{figure}
\noindent We have marked a tilting object for the stable category
$\svect{5}=\snilop{5}$ with endomorphism ring of Dynkin type $\EE_8$.
\subsubsection*{Euler characteristic zero, the case {$p=6$}}
For $\chi_\XX=0$, that is $p=6$, the category $\coh\XX$ is tubular of
type $(2,3,6)$. Hence the line bundles are exactly the objects in the
tubes of integral slope and of $\tau$-period $6$, see
\cite{Lenzing:Meltzer:1993}.  Passing to the factor category
$\vect\XX/[\Ff]=\nilop{p}$ all other Auslander-Reiten components
remain unchanged, while the ``line bundle components'' get the shape
from Figure~\ref{figure:tube}
\begin{figure}[H]
$$
\def\c{\circ} \def\b{\bullet} \xymatrix@-1pc@!R=8pt@!C=8pt{ & \vdots & \vdots
  & \vdots  & \vdots
  & \vdots & \vdots & \vdots & \vdots & \vdots
  & \vdots  & \vdots & \\
  & \b\ar[rd]       &                       &\b\ar[rd]       &
  &\b\ar[rd]       &                       &\b\ar[rd]       &
  & \b\ar[rd]       &                       &\b\ar[rd]       & \\
  \b\ar[ru]\ar[rd]       &                &\b\ar[ru]\ar[rd]       &
  &\b\ar[ru]\ar[rd]       &                &\b\ar[ru]\ar[rd]       &
  &\b\ar[ru]\ar[rd]&      &\b\ar[ru]\ar[rd]       &
  &\b\\
  &\b\ar[ru]\ar[rd]&                       &\b\ar[ru]\ar[rd]&
  &\b\ar[ru]\ar[rd]&                       &\b\ar[ru]\ar[rd]&
  &\b\ar[ru]\ar[rd]&                       &\b\ar[ru]\ar[rd]& \\
  \b\ar[ru]\ar[rd]       &                &\b\ar[ru]\ar[rd]       &
  &\b\ar[ru]\ar[rd]       &                &\b\ar[ru]\ar[rd]       &
  &\b\ar[ru]\ar[rd]&                &\b\ar[ru]\ar[rd]       &
  &\b\\
  & \b\ar[ru]\ar[rd]&                       &\b\ar[ru]\ar[rd]&
  &\b\ar[ru]\ar[rd]&                       &\b\ar[ru]\ar[rd]&
  & \b\ar[ru]\ar[rd]&                       &\b\ar[ru]\ar[rd]& \\
  \b\ar[ru]\ar[rd]&                       &\b\ar[ru]\ar[rd]&
  &\b\ar[ru]\ar[rd]&                       &\b\ar[ru]\ar[rd]&
  &\b\ar[ru]\ar[rd]&                       &\b\ar[ru]\ar[rd]&
  &\b\\
  &\b\ar[ru]\ar@{.>}[rd]&                       &\b\ar[ru]\ar@{.>}[rd]&
  &\b\ar[ru]\ar@{.>}[rd]&                       &\b\ar[ru]\ar[rd]&
  &\b\ar[ru]\ar@{.>}[rd]&                       &\b\ar[ru]\ar[rd]& \\
  \b\ar[ru] \ar @{-}[uuuuuuuu]& &\c\ar@{.>}[ru] & &\c\ar@{.>}[ru] &
  &\c\ar@{.>}[ru] &
  &\b\ar[ru] & &\c\ar@{.>}[ru] & &\b \ar @{-}[uuuuuuuu]}
$$
\caption{Tube for $\nilop{6}$ containing line bundles.}\label{figure:tube}
\end{figure}
\noindent (line bundles at the lower boundary). Here the
identification yields standard (non-stable) tubes of $\tau$-period
$6$. Concerning the stable category $\svect\XX=\snilop{6}$, we have
the following result.
\begin{proposition}\label{prop:tubular} Assume $\XX$ has weight type
  $(2,3,6)$. Then there exists a tilting object in the stable category
  $\svect\XX=\snilop{6}$ whose endomorphism ring is the canonical
  algebra $\La=\La(2,3,6)$. In particular, we have triangle
  equivalences $\svect\XX=\snilop{6}\iso\Der{\coh\XX}$.
\end{proposition}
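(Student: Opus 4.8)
The plan is to deduce both assertions from Orlov's trichotomy together with the Geigle--Lenzing description of $\coh\XX$ by a canonical algebra, obtaining the tilting object by transport of structure; I also indicate a more hands-on route that avoids Orlov.

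First I would fix the numerical picture. For $p=6$ one has $\eulerchar{\XX}=1/6-1/6=0$, and by Corollary~\ref{cor:Knull} the Grothendieck groups $\Knull{\svect\XX}$ and $\Knull{\coh\XX}$ are both free of rank $2(p-1)=10$, which also equals $\rank\Knull{\mmod{\La(2,3,6)}}$. Since $\eulerchar{\XX}=0$, case~(2) of Orlov's trichotomy provides a triangle equivalence $\Psi\colon\svect\XX=\Tt\lra\Der{\coh\XX}$. Next, recall the canonical tilting bundle $T_{\mathrm{can}}$ on $\XX(2,3,6)$, namely the direct sum of the line bundles $\Oo(\vx)$ with $0\le\vx\le\vc$; by Geigle--Lenzing~\cite{Geigle:Lenzing:1987} it is a tilting object of $\coh\XX\subseteq\Der{\coh\XX}$ with $\End{}{T_{\mathrm{can}}}\iso\La(2,3,6)$.

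Now I would simply transport: put $\widetilde T:=\Psi^{-1}(T_{\mathrm{can}})$. A triangle equivalence carries tilting objects to tilting objects, so $\widetilde T$ is a tilting object of $\svect\XX$ with $\End{}{\widetilde T}\iso\End{}{T_{\mathrm{can}}}\iso\La(2,3,6)$; since $\svect\XX=\vect\XX/[\Ll]$ has the same objects as $\vect\XX$, one may take $\widetilde T$ to be (the image of) a genuine bundle. By Theorem~C the object $\Phi(\widetilde T)$ is then a tilting object of $\snilop{6}$ with the same endomorphism ring, which gives the first assertion. For the ``in particular'': already the existence of a tilting object of $\svect\XX$ with endomorphism algebra $\La(2,3,6)$ yields, by the standard tilting formalism for algebraic triangulated categories, an equivalence $\svect\XX\iso\Der{\mmod{\La(2,3,6)}}$, and composing with the Geigle--Lenzing equivalence $\Der{\mmod{\La(2,3,6)}}\iso\Der{\coh\XX}$ (legitimate since $\La(2,3,6)$ has global dimension~$2$) gives $\svect\XX=\snilop{6}\iso\Der{\coh\XX}$; this of course also just re-packages the equivalence $\Psi$ found above.

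I expect the main obstacle to be almost illusory once Orlov's trichotomy is in hand: the only genuine point is to identify which bundle in $\svect\XX$ corresponds to $T_{\mathrm{can}}$, and there is essentially nothing to compute. If instead one wants a proof independent of Orlov, the difficulty moves into an \emph{explicit} construction: one must exhibit inside $\vect\XX(2,3,6)$ a system of ten exceptional bundles --- necessarily none of them a line bundle, since all line bundles vanish in $\svect\XX$ --- arranged along the star-shaped canonical quiver, and then check, using Serre duality and the Lenzing--Meltzer classification of $\vect\XX(2,3,6)$~\cite{Lenzing:Meltzer:1992}, that their stable $\Homstab{}{-}{-}$-spaces reproduce exactly the defining relations of $\La(2,3,6)$ while the higher stable self-extensions vanish. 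That Hom-computation would be the one laborious ingredient.
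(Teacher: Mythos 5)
Your argument is correct in substance, but it takes a genuinely different route from the paper. You obtain the triangle equivalence $\svect\XX\iso\Der{\coh\XX}$ \emph{first}, by invoking case~(2) of Orlov's trichotomy for Gorenstein parameter zero, and then pull the canonical tilting bundle back through that equivalence; the tilting object of $\svect\XX$ is thus produced abstractly, by transport of structure. The paper instead stays entirely inside the theory of the tubular weighted projective line: it starts from the canonical tilting bundle $T=\bigoplus_{0\le\vx\le\vc}\Oo(\vx_3+\vx)$ with $\End{}{T}\iso\La(2,3,6)$ and applies the tubular (telescoping) auto-equivalence $\rho$ of $\Der{\coh\XX}$ from \cite{Lenzing:Meltzer:1992}, which acts on slopes by $q\mapsto 1/(1+q)$; since in the tubular case the line bundles are exactly the indecomposables of integral slope in the $\tau$-period-$6$ tubes, $\rho T$ is a bundle with no line-bundle summands, hence survives in $\svect\XX=\vect\XX/[\Ll]$ and is checked there to remain tilting with unchanged endomorphism ring. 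The equivalence $\svect\XX\iso\Der{\coh\XX}$ is then a \emph{consequence}, exactly as in your final paragraph. The trade-off: your proof is shorter but leans on the $\LL$-graded version of Orlov's theorem, which the paper states without proof and explicitly treats Proposition~\ref{prop:tubular} and Corollary~\ref{cor:Knull} as \emph{illustrations} of --- so while there is no formal circularity within the paper's logic, you are consuming as input precisely the external black box that the paper's self-contained slope argument is designed to corroborate; the paper's route also delivers a concrete tilting bundle (your ``hands-on'' alternative is essentially what $\rho$ accomplishes without any explicit Hom-computation). One small point to make explicit if you keep your version: passing from ``tilting object with endomorphism ring $\La(2,3,6)$'' to the derived equivalence uses that $\svect\XX$ is algebraic (stable category of a Frobenius category) and idempotent complete (it is Krull--Schmidt by Section~\ref{sect:basics}), and that $\La(2,3,6)$ has finite global dimension.
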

\begin{proof}
We sketch the argument, leaving details
to~\cite{Kussin:Lenzing:Meltzer:2010pre}. As shown
in~\cite{Geigle:Lenzing:1987}, the direct sum $T$ of all line bundles
$\Oo(\vx_3+\vx)$ with $\vx$ in the range $0\leq\vx\leq\vc$ is a
tilting object for $\coh\XX$ and $\Der{\coh\XX}$. By
\cite{Lenzing:Meltzer:1993} there is an auto-equivalence $\rho$ of
$\Der{\coh\XX}$ acting on slopes $q$ by $q\mapsto 1/(1+q)$. It follows
that $\rho T$ is a bundle whose indecomposable summands have slopes
$q$ in the range $1/2< q <1$. It follows from this property that $\rho
T$ is a tilting object for $\svect\XX$ having all the claimed
properties.
\end{proof}
Recall in this context that the category $\Hh=\coh\XX$ is hereditary,
yielding the very concrete description of $\Der{\coh\XX}$ as the
\emph{repetitive category} $\bigvee_{n\in\ZZ}\Hh[n]$, where each
$\Hh[n]$ is a copy of $\Hh$ (objects written $X[n]$ with $X\in\Hh$)
and where morphisms are given by
$\Hom{}{X[n]}{Y[m]}=\Ext{m-n}\Hh{X}{Y}$ and composition is given by
the Yoneda product.
\begin{remark}
The classification of indecomposable bundles over the weighted
projective line $\XX=\XX(2,3,6)$ is very similar to Atiyah's
classification of vector bundles on a smooth elliptic curve, compare
\cite{Atiyah:elliptic} and \cite{Lenzing:Meltzer:1993}. Indeed the
relationship is very close: Assume the base field is algebraically
closed of characteristic different from $2$ and $3$. If $E$ is a
smooth elliptic curve of $j$-invariant $0$, it admits an action of the
cyclic group $G$ of order $6$ such that the category
$\mathrm{coh}_G(E)$ of $G$-equivariant coherent sheaves on $E$ is
equivalent to $\coh\XX$. Thus $\snilop{6}$ has the additional
description as stable category
$\underline{\mathrm{vect}}_G\textrm{-}E$ of $G$-equivariant vector
bundles on $E$.
\end{remark}

\subsubsection*{Negative Euler characteristic}
Let $\chi_\XX<0$, accordingly $p\geq7$. Here, the classification
problem for $\svect\XX=\snilop{p}$ is wild. The study of these
categories is related to the investigation of Fuchsian singularities
in  \cite{KST-2, Lenzing:Pena:2011} but,  with the single exception
$p=7$ yields a different stable category of vector bundles (since only
one $\tau$-orbit of line bundles is factored out in the Fuchsian
case).

It is shown in~\cite{Lenzing:Pena:1997} that, for $\XX=\XX(2,3,p)$ and
$p\geq7$, all
Auslander-Reiten components for $\vect\XX$ have the shape $\ZZ
A_\infty$, and we have exactly $|\LL/\ZZ\vom|=p-6$ components
containing a line bundle. Only the shape of these components is affected when
passing to the factor category $\vect\XX/[\Ff]=\nilop{p}$,

\begin{proposition}
 For $p\geq 7$ each Auslander-Reiten component of
  $\svect\XX(2,3,p)=\snilop{p}$ is of shape
  $\ZZ\AA_\infty$. Moreover there is a natural bijection between the
  set of all Auslander-Reiten components to the set of all regular
  Auslander-Reiten components
  over the wild path algebra $\La_0$ over the star $[2,3,p]$.
\end{proposition}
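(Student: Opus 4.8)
The plan is to reduce everything to the structure of $\vect\XX$, via the general principle recorded at the start of this section: the line bundles being the indecomposable projective-injectives of the Frobenius category $\vect\XX$, the Auslander--Reiten quiver of $\snilop{p}=\svect\XX=\vect\XX/[\Ll]$ is obtained from that of $\vect\XX$ by deleting the line-bundle vertices and the incident arrows. By~\cite{Lenzing:Pena:1997} every Auslander--Reiten component of $\vect\XX$ has shape $\ZZ\AA_\infty$ and exactly $|\LL/\ZZ\vom|=p-6$ of them meet the line bundles. Two things then remain: (a) the deletion turns each of these $p-6$ components again into one of shape $\ZZ\AA_\infty$, and leaves the other (line-bundle-free) components untouched; (b) on the level of \emph{sets} of components the deletion neither merges nor splits anything, and the resulting set is naturally identified with the set of regular Auslander--Reiten components of $\mmod{\La_0}$.

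For (a): fix a component $\mathcal{C}$ of $\vect\XX$ containing a line bundle $L$. The almost split sequence $0\to L(\vom)\to E(L)\to L\to 0$ has indecomposable middle term, since the weight type has three entries and hence $E(L)$ is exceptional by~\cite{Kussin:Lenzing:Meltzer:2010pre}; so $L$ lies on the mouth of the $\ZZ\AA_\infty$-component $\mathcal{C}$. The mouth of $\ZZ\AA_\infty$ is a single $\tau$-orbit, and since $\tau$ acts on line bundles by $L\mapsto L(\vom)$ with $\vom$ of infinite order, the $\tau$-orbits of line bundles are parametrised by $\LL/\ZZ\vom$, of which there are $p-6$ --- exactly the number of components meeting the line bundles. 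Hence each such $\mathcal{C}$ contains exactly one $\tau$-orbit of line bundles, and --- the line bundles in $\mathcal{C}$ forming a nonempty $\tau$-stable subset of the transitively acted-upon mouth --- that orbit \emph{is} the whole mouth of $\mathcal{C}$. Removing the mouth of $\ZZ\AA_\infty$ yields a translation quiver again isomorphic to $\ZZ\AA_\infty$, while the remaining components are untouched; so every Auslander--Reiten component of $\snilop{p}$ has shape $\ZZ\AA_\infty$, which is the first assertion.

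For (b): since $\ZZ\AA_\infty$ stays connected after removing its mouth, distinct components of $\vect\XX$ go to distinct components of $\snilop{p}$ and all of them are hit, so the deletion induces a bijection on sets of components. It remains to match the components of $\vect\XX$ with the regular Auslander--Reiten components of $\mmod{\La_0}$, where $\La_0=k[2,3,p]$ is the wild hereditary star. The approach is to invoke~\cite{Lenzing:Pena:1997} via the standard comparisons: by~\cite{Geigle:Lenzing:1987} there is a derived equivalence $\Der{\coh\XX}\simeq\Der{\mmod{\La(2,3,p)}}$ with the wild canonical algebra $\La(2,3,p)$; and since $\La_0$ is obtained from $\La(2,3,p)$ by deleting the canonical sink vertex, $\Der{\mmod{\La_0}}$ sits inside $\Der{\mmod{\La(2,3,p)}}$ as a semiorthogonal summand complementary to a single exceptional object. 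Passing to this summand leaves the $\ZZ\AA_\infty$-components intact, and under the first equivalence these are exactly the components carrying the vector bundles (this is the analysis of~\cite{Lenzing:Pena:1997}); so on them the identification restricts to the regular components of $\mmod{\La_0}$. Composing the two bijections gives the asserted natural bijection.

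The genuinely new content is the bookkeeping in (a) --- identifying the line bundles of a line-bundle component with its whole mouth, using the exceptionality of Auslander bundles and the orbit count $|\LL/\ZZ\vom|=p-6$. The main obstacle is in (b): one must know that the Geigle--Lenzing equivalence and the perpendicular-category passage are compatible with Auslander--Reiten components, i.e.\ that removing one exceptional object leaves the $\ZZ\AA_\infty$-components intact and accounts precisely for the regular components of the hereditary star --- which is where one relies genuinely on~\cite{Lenzing:Pena:1997}; the rest is formal or imported.
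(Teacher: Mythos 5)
Your proposal is correct and follows essentially the same route as the paper: all line bundles sit at the border of their $\ZZ\AA_\infty$-components in $\vect\XX$, deleting them preserves both the shape $\ZZ\AA_\infty$ and the set of components, and the matching with the regular components of the star algebra is imported from \cite{Lenzing:Pena:1997}. The only (harmless) deviation is that you place the line bundles on the mouth via the indecomposability of the Auslander bundles, where the paper invokes stability arguments from \cite{Lenzing:Pena:1997}; your observation that the line bundles exhaust the mouth --- being a nonempty $\tau$-stable subset of a single $\tau$-orbit --- makes explicit a point the paper leaves implicit.
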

\begin{proof}
Invoking stability arguments, all line bundles lie at the border of
their Auslander-Reiten component in
$\vect\XX$~\cite{Lenzing:Pena:1997}. Passage to the stable category
then shows that all components in $\svect\XX$ have shape
$\ZZ\AA_\infty$. The argument implies, moreover, that there is a
natural bijection between the set of Auslander-Reiten components in
$\vect\XX$ and in $\svect\XX$, respectively. The claim then follows
from \cite{Lenzing:Pena:1997}.
\end{proof}
\begin{figure}[H]
\small
$$
\def\c{\circ} \def\b{\bullet} \xymatrix@-1pc@!R=7pt@!C=7pt{& & & \vdots &
  \vdots & \vdots & \vdots & \vdots & \vdots & \vdots & \vdots &
  \vdots
  & \vdots  & \vdots & &\\
  && & \b\ar[rd] & &\b\ar[rd] & &\b\ar[rd] & &\b\ar[rd] &
  & \b\ar[rd]       &                       &\b\ar[rd]       & & \\
  && \b\ar[ru]\ar[rd] & &\b\ar[ru]\ar[rd] & &\b\ar[ru]\ar[rd] &
  &\b\ar[ru]\ar[rd] & &\b\ar[ru]\ar[rd]& &\b\ar[ru]\ar[rd] & &\b
  &\\
  && &\b\ar[ru]\ar[rd]&                       &\b\ar[ru]\ar[rd]&
  &\b\ar[ru]\ar[rd]&                       &\b\ar[ru]\ar[rd]&
  &\b\ar[ru]\ar[rd]&                       &\b\ar[ru]\ar[rd]& &\\
  && \b\ar[ru]\ar[rd] & &\b\ar[ru]\ar[rd] & &\b\ar[ru]\ar[rd] &
  &\b\ar[ru]\ar[rd] & &\b\ar[ru]\ar[rd]& &\b\ar[ru]\ar[rd] &
  &\b &\\
  && & \b\ar[ru]\ar[rd]&                       &\b\ar[ru]\ar[rd]&
  &\b\ar[ru]\ar[rd]&                       &\b\ar[ru]\ar[rd]&
  & \b\ar[ru]\ar[rd]&                       &\b\ar[ru]\ar[rd]& &\\
  && \b\ar[ru]\ar[rd]& &\b\ar[ru]\ar[rd]& &\b\ar[ru]\ar[rd]&
  &\b\ar[ru]\ar[rd]& &\b\ar[ru]\ar[rd]& &\b\ar[ru]\ar[rd]&
  &\b &\\
  && &\b\ar[ru]\ar@{.>}[rd]& &\b\ar[ru]\ar@{.>}[rd]& &\b\ar[ru]\ar@{.>}[rd]&
  &\b\ar[ru]\ar[rd]&
  &\b\ar[ru]\ar@{.>}[rd]&                       &\b\ar[ru]\ar[rd]& & \\
  &L & \b\ar[ru] \ar @{-}[uuuuuuuu]& &\c\ar@{.>}[ru] &
  &\c\ar@{.>}[ru] & &\c\ar@{.>}[ru]
  & &\b\ar[ru] & &\c\ar@{.>}[ru] & &\b \ar @{-}[uuuuuuuu] && L((p-6)\vx_3)}
$$
\caption{Case $p\geq7$. Fundamental domain for the ``distinguished''
  components}\label{figure:wild}
\end{figure}
\normalsize
\noindent The picture is a nice illustration for
Proposition~\ref{prop:fundamental:domain}.
For $p\geq7$ the class of $\vx_3$ is a generator of  $\LL/\ZZ\vom$
having  order $p-6$. Accordingly shift with $\vx_3$ acts on the
$(p-6)$-element set of ``distinguished'' components by cyclic
permutation. Figure~\ref{figure:wild} therefore shows a fundamental
domain $\Dd$ for the $p-6$ ``distinguished'' components.

\subsubsection*{ADE-chain}
The table below summarizes previous results and displays for
$\svect\XX=\snilop{p}$ with $\XX$ of type $(2,3,p)$ the fractional
Calabi-Yau dimension, the Euler characteristic $\chi_\XX$, the Coxeter
number $h$, the representation type, and the
derived type of $\snilop{p}$ for small values of $p$.
\small
\begin{table}[H]
\begin{displaymath}
\def\l{\langle} \def\r{\rangle}
\renewcommand{\arraystretch}{1.4}
\arraycolsep0pt
\begin{tabular}{|c||cccc|c|cccc|} \hline
  $p$ & 2 & 3 & 4 & 5 & 6 & 7 & 8 & 9&$p$\\
\hline
CY-dim&$\frac{1}{3}$
&$\frac{2}{3}$&$\frac{10}{12}$&$\frac{14}{15}$&$\frac{6}{6}$&$\frac{22}{21}$&$\frac{26}{24}$&
$\frac{10}{9}$&$\frac{\lcm(3,p)\cdot
    (1-2\cdot\eulerchar{\XX})}{\lcm(3,p)}$\\\hline
$\chi_\XX$&$\frac{1}{3}$
&$\frac{1}{6}$&$\frac{1}{12}$&$\frac{1}{30}$&$0$
&$-\frac{1}{42}$&$-\frac{1}{24}$&
$-\frac{1}{18}$&$\frac{1}{p}-\frac{1}{6}$\\\hline
$h$&3&6&24&30&6&42&24&18&$\lcm(6,p)$\\\hline
type      &$\AA_2$       &$\DD_4$      &$\EE_6$       &$\EE_8$
&$(2,3,6)$     &$\l 2,3,7\r$  &
$\l 2,3,8\r$  &$\l 2,3,9\r$&$\l 2,3,p\r$ \\ \hline
repr.\ type&\multicolumn{4}{c|}{repr.-finite}& tubular&
\multicolumn{4}{c|}{wild, new type}\\\hline
\end{tabular}
\end{displaymath}
\caption{An ADE-chain}
\end{table}
\normalsize
\noindent The table expresses an interesting property of the sequence
of triangulated categories $\svect\XX(2,3,p)=\snilop{p}$. For small
values of $p$, the category $\svect\XX=\snilop{p}$ yields Dynkin
type. For $p=6$ the sequence passes the `borderline' of tubular type
and then continues with wild type. While such situations occur
frequently,  it is quite rare that one gets an infinite sequence of
categories $\Tt_n$ which all are fractional Calabi-Yau and where the
size of $\Tt_n$, measured in terms of the Grothendieck group, is
increasing with $n$.

Returning to the particular case $\Tt_p=\svect\XX(2,3,p)$ we know from Theorem~\ref{thm:tilting} that
$\Tt_p$ has a tilting object $T$ consisting of the Auslander bundles
$E\ra E(\bx_3)\ra E(\bx_1)\ra \cdots \ra E((p-2)\bx_1)\ra
E((p-2)\bx_1+\bx_3)$ and whose endomorphism ring is $A(n,3)$ with
$n=2(p-1)$. This implies that the right perpendicular category formed
in  $\Tt_p$ with respect to the exceptional pair consisting of the 'last two' members
$E((p-2)\bx_1),E((p-2)\bx_1+\bx_3)$ of the tilting object $T$ is equivalent to $\Tt_{p_1}$, implying that $\Tt_{p_1}$ can be viewed as a triangulated subcategory of $\Tt_p$ for each $p\geq 3$.

This allows the following attempt in to 'define' the notion of an
ADE-chain, by requesting the three properties below:
\begin{enumerate}
\item[(1)] The triangulated categories $(\Tt_n)$ form an infinite
  chain $\Tt_1\subset \Tt_2
  \subset \Tt_2\subset \cdots$ of triangulated categories with Serre
  duality which are fractionally Calabi-Yau;
\item[(2)] Each category $\Tt_n$ has a tilting object $T_n$, hence a
  Grothendieck group which is finitely generated free of rank $n$;
\item[(3)] The sequence formed by the endomorphism rings $A_p=\End{}{T_p}$ is a subsequence
  of a sequence of algebras $(B_n)$ that form an accessible chain of
  finite dimensional algebras in the sense of
  \cite{Lenzing:Pena:2008}, that is, $A_1=k$ and for each integer $n$
  the algebra $A_{n+1}$ is a one-point extension or coextension of
  $A_n$ with an exceptional $A_n$-module.
\end{enumerate}
In our example the request (3) can be satisfied by means of the
algebras $B_n=A(n,3)$. Note, however, that $\Der{\mod{A(n,3)}}$ may fail to
be fractionally Calabi-Yau if $n$ is odd.

\subsection*{The special role of the number 6}

The numbers $6$ and $p-6$ play a special role in dealing with
$\nilop{p}$ and $\snilop{p}$. We advise the reader in this context to
check the paper \cite{Ringel:Schmidmeier:2008a} for the ubiquitous
appearance of the number $6$. Of course this ubiquity of the number
$6$ has its roots in the relationship between $\nilop{p}$ and
$\vect\XX$, where $\XX$ has weight type $(2,3,p)$. The following list
displays a number of appearances of the two numbers:
\begin{enumerate}
\item The group $\LL/\ZZ\vx_3$ is cyclic of order $6$ generated by the
  class of $\vom$.
\item We have $6\vom=(p-6)\vx_3$, thus $\tau^6=\si_3^{p-6}$ holds
  in $\vect\XX$, $\nilop{p}$ and $\snilop{p}$.
\item The partition of line bundles into persistent and fading ones
  obeys the $6$-periodic pattern ${+}\,{-}\,{+}\,{-}\,{-}\,{-}$ in
  each $\tau$-orbit, where $+$ and $-$ stand for persistent and
  fading, respectively.
\item Euler characteristic $\eulerchar\XX$ of $\XX$ and
  fractional Calabi-Yau dimension $d_p$ of $\svect\XX=\snilop{6}$ are
  given by $\chi_\XX=\ffrac{1}{p}-\ffrac{1}{6}$ and
  $d_p=\ffrac{(4p-6)}{3p}$ (up to
  cancelation), respectively.
\item The borderline between (derived) representation-finiteness and
  wildness for $\nilop{p}$ and $\snilop{p}$ is marked by $p=6$.
\item If $p$ and $6$ are coprime, then the Auslander-Reiten
  translation $\tau$ on $\snilop{p}$ has a unique $(p-6)$th root in
  the Picard group.
\end{enumerate}
Additionally we refer to Proposition~\ref{prop:fundamental:domain} for
further occurrences of the number $6$.

\appendix
\section{Existence of a tilting object} \label{sect:appendix}
The existence of a tilting object $T$ is central for many applications discussed in the previous section. We prove the absence of self-extensions of $T$ by a combination of two methods: (a) the determinant argument from Lemma~\ref{lemma_determinant} and (b) the use of line bundle filtrations for the indecomposable summands of $T$. Recall that $\bx_i=\vx_i+\vom$ for $i=1,2,3$.

\begin{lemma} \label{lemma_determinant}
Assume weight type $(2,a,b)$, and let $E$ be an Auslander bundle. Then for any $\vx\in\LL$ we have
$\sHom{}{E}{E(\vx)}=\dual\sHom{}{E}{E(\bx_1-\vx)}$. In particular $\sHom{}{E}{E(\vx)}\neq 0$ implies $2\vx\geq0$ and $2(\bx_1-\vx)\geq 0$.
\end{lemma}
\begin{proof}
  By Serre duality we obtain that
  $\sHom{}{E}{E(\vx)}=\sHom{}{E}{E(\vx-\vx_1)[1]}=\dual\sHom{}{E(\vx-\vx_1)}{E(\vom)}=\dual\sHom{}{E}{E(\bx_1-\vx)}$,
  yielding the first claim. Assume now that $u\colon E\ra E(\vx)$
  becomes non-zero in $\svect{\XX}$. Then Lemma~\ref{lemma:determinant} implies the second claim using that $\determ(E(\vx))-\determ(E)=2\vx$.
\end{proof}
Our next result is fundamental to establish a tilting bundle consisting of Auslander bundles. We provide an elementary proof. For a more conceptual treatment of the topic we refer to~\cite{Kussin:Lenzing:Meltzer:2010pre}.
\begin{proposition} \label{prop:stable_morphisms}
Assume weight type $(2,3,p)$. For each Auslander bundle $E$ we then have
$$
\sHom{}{E}{E(\vx)}\neq0 \quad \iff \quad \vx\in\set{0,\bx_1,\bx_2,\bx_3}.
$$
Moreover, there exist monomorphisms $u_i\colon E\ra E(\bx_i)$ such that
$$
\sHom{}{E}{E(\bx_i)}=\Hom{}{E}{E(\bx_i)}=k\,u_i\quad i=1,2,3.
$$
Further, the morphisms $u_1$, $u_2u_3$ and $u_3u_2$ agree, up to multiplication by a non-zero scalar.
\end{proposition}
\begin{proof}
\underline{Step 1:} We show first that $\sHom{}{E}{E(\bx_i)}=\Hom{}{E}{E(\bx_i)}$. Assume that we have a factorization $[E\up\alpha L(\vx)\up\beta E\bx_i]\neq0$ for some $\vx\in\LL$, and some $i=1,2,3$. Then ($0\leq\vx$ or $\vom\leq\vx$) and ($\vx\leq\bx_i$ or $\vx\leq\bx_i+\vom$). This only leaves the possibilities $\vx=\vom$ or $\vx=\bx_i$. The choice $\vx=\vom$ yields a non-zero member $E\up\alpha L(\vom)\up{j}E$ of $\End{}{E}=k$ resulting in a splitting of $E$. The second choice $\vx=\bx_i$ similarly yields a splitting of $E(\bx_i)$ which again is impossible.

\underline{Step 2:} We know already that $\sEnd{}{E}=k$. Applying $\Hom{}{-}{L(\bx_i)}$ to $\eta\colon 0 \ra L(\vom) \up{\iota}E \up{\pi}L \ra 0$ we obtain an isomorphism

$$(1)\quad \Hom{}{E}{L(\bx_i)}\up{-\circ\iota}\Hom{}{L(\vom)}{L(\bx_i)}.$$
Then applying $\Hom{}{E}{-}$ to $\eta(\bx_i)$, we obtain another isomorphism
$$(2)\quad \Hom{}{E}{E(\bx_i)}\up{\pi(\bx_i)\circ -}\Hom{}{E}{L(\bx_i)}.$$
Combining (1) and (2) we see that $\Hom{}{E}{E(\bx_i)}\iso\Hom{}{L}{L(\vx_i)}=k$. This shows the existence of the $u_i$. Assume that there is a non-zero composition $E\up\alpha L(\vx)\up\beta E(\bx_i)$ for some line bundle $L(\vx)$, then $\vx=\omega$ or $\vx=\bx_i$. In either case there results a non-trivial splitting of $E$ or $E(\bx_i)$ which is impossible. Hence $\sHom{}{E}{E(\vx_i)}=\Hom{}{E}{E(\vx_i)}=ku_i$ for $i=1,2,3$. By Step~1 the $u_i$ are monomorphisms, hence the second claim follows from $\bx_1=\bx_2+\bx_3$.

\underline{Step 3:} We have (a) $\sHom{}{E}{E(\vx)}=0$ for all $\vx>0$, (b) $\sHom{}{E}{E(\bx_2-a\bx_3)}=0$ for all $a\geq1$, and (c) $\sHom{}{E}{E(\bx_3+a\vx_3)}=0$ for all $a\geq1$.

\underline{ad (a):} By Lemma~\ref{lemma_determinant} it is equivalent to show that $\sHom{}{E(\vx)}{E(\bx_1)}=0$. By Proposition~\ref{prop:suspension} we have $\dual{\Hom{}{E(\vx_1)}{E(\bx_1)}}=\sHom{}{E(\vx_1)}{(E(\vx_1))[1]}=0$, we may hence assume that $0<\vx\neq\vx_1$. Now we have $\vx_1-\vx\not\geq0$ and $(\bx_1+\vom)-\vx\geq0$, since otherwise $\bx_1\geq0$, respectively $\vx_1+\vom\geq0$ which both is impossible. Finally $\bx_1-(\vx+\vom)=\vx_1-\vx\not\geq0$ since otherwise $0<\vx <\vx_1$ which again is impossible. We conclude that $\Hom{}{E(\vx)}{E(\bx_1)}=0$ for $\vx>0$.

\underline{ad (b) and (c):} Since $\bx_1-(\bx_3+a\vx_3)=\bx_2-a\vx_3$, it suffices to show property (b). As for the proof of (a) we check the existence of non-zero morphisms between the line bundle factors $L,L(\vom)$ for $E$ and $\bx_2-a\vx_3+\vom$ for $E(\bx_2-a\vx_3)$. That is, for $\vy=\bx_2-a\vx_3$ we need to show that none of $\vy,\vy\pm\vom$ is $\geq0$. Indeed, $\vy=\bx_2-a\vx_3=\vc-\vx_1-(a+1)\vx_3\not\geq0$, $\vy-\vom=\vx_2-a\vx_3\not\geq0$, and $\vy+\vom=\vc-\vx_2-(a+2)\vx_3\not\geq0$.

\underline{Step 4:} We are now in a position to prove the first claim of the proposition. Assume $\vx\notin\set{0,\vx_1,\vx_2,\vx_3}$ and $\sHom{}{E}{E(\vx)}\neq0$. By Step~3 we may assume that $2\vx\not\geq0$, $\bx_1-\vx\not\geq0$, and (*) $0<2\vx<2\bx_1=\vx_2+(p-2)\vx_3$. Therefore the normal form of $\vx$ has the form (**) $\vx=\ell_1\vx_1+\ell_2\vx_2+\ell_3\vx_3-\vc$ where $0\leq\ell_1\leq1$, $0\leq\ell_2\leq2$ and $0\leq\ell_3\leq p-1$. Comparison of (*) and (**) then only allows the following two cases: 1) $2\vx=a\vx_3$ and 2) $2\vx=\vx_2+a\vx_3$ with $0\leq a\leq p-2$.

\underline{Case 1:} Here we obtain $\ell_2=0$, implying that $\ell_1=1$ and $p/2\leq\ell_3\leq p-1$. We then get $\vx = \vx_1 +\ell_3\vx_3 -\vc
    = \bx_2 -a\vx_3$ with $a=p-(\ell_3+1)>0$
since the possibility $\vx=\bx_2$ was excluded. Now Step~3(b) yields $\sHom{}{E}{E(\vx)}=0$, contradicting our assumption.

\underline{Case 2:} Here we obtain $\ell_2=2$, leaving just two possibilities for $(\ell_1,\ell_3)$:

a) $(\ell_1,\ell_3)=(0,\ell_3)$ with $p/2 \leq \ell_3\leq p-1$, and

b) $(\ell_1,\ell_3)=(1,\ell_3)$ with $0\leq \ell_3\leq (p-2)/2$.

In case a) we have $\vx=2\vx_2 +\ell_3\vx_3-\vc$ with $p/2 \leq \ell_3\leq p-1$. Since $\bx_1-\vx=a\vx_3$ with $a=p-(\ell_3+1)\geq0$, then Step~3(a) implies in view of Lemma~\ref{lemma_determinant} that $\sHom{}{E}{E(\vx)}=0$, contradicting our assumption. Finally in case b) we have $\vx=\vx_1+2\vx_2+\ell_3\vx_3-\vc=\vx_1-\vx_2+\ell_3\vx_3=\bx_3+\ell_3\vx_3$ with $\ell_3>0$. Then Step~3(c) implies the contradiction $\sHom{}{E}{E(\vx)}=0$.
\end{proof}

\begin{theorem} \label{thm:tilting}
Assume $\XX$ has weight type $(2,3,p)$, $E$ is an Auslander bundle on $\XX$, and $M=\Set{a\bx_1+b\bx_3}{a=0,\ldots,p-2\textrm{ and } b=0,1}$. Then $T=\bigoplus_{\vx\in M}E(\vx)$ is a tilting object in $\svect\XX$ with endomorphism ring isomorphic to $A(2(p-1),3)$.
\end{theorem}
\begin{proof}
\underline{Endomorphism ring}: To calculate $\sEnd{}{T}$ we arrange the summands $E(\vx)$ according to the scheme
$$
\scriptsize
\xymatrix @-1pc @!R=12pt @!C=12pt{
              &\bx_3\ar[dr]_{u_2}&& \bx_1+\bx_3\ar[dr]_{u_2}&&2\bx_1+\bx_3& \cdots&\ar[dr]_{u_2}&&(p-1)\bx_1+\bx_3\\
0\ar[ur]_{u_3}&&\bx_1\ar[ur]_{u_3}&&2\bx_1\ar[ur]_{u_3}&&&&      (p-1)\bx_1\ar[ur]_{u_3}&\\
}
$$
Then Proposition~\ref{prop:stable_morphisms} implies the claim on the endomorphism ring of $T$.

(A) $T$ is extensionfree:
We show that $\sHom{}{E(\vx)}{E(\vy)[n]}\neq0$ with $\vx,\vy\in M$ and $n\in\ZZ$ implies that $n=0$. From the assumption we obtain the existence of $\vz\in\set{0,\bx_1,\bx_2,\bx_3}$ where $\vc=\vy-\vx+n\vx_1$, and then $\vz=\alpha\bx_1+\beta\bx_3+n\vx_1$ with $|\alpha|\leq p-2$, $|\beta|\leq1$ and $n\in\ZZ$. Passing to congruences in $\LL$ modulo the subgroup $\langle \vx_1,\vx_2\rangle$, generated by $\vx_1$ and $\vx_2$, we obtain
$$
-\alpha\vx_3\equiv\vz\equiv
\begin{cases}
0 & \textrm{if } \vz\in \set{0,\bx_3}\\
-\vx_3& \textrm{if } \vz\in\set{\bx_1,\bx_2}.
\end{cases}
$$
Since $\LL/\langle\vx_1,\vx_2\rangle$ is cyclic of order $p$ and generated by the class of $\vx_3$, we deduce from $|\alpha|\leq p-2$ that $\alpha=0$ for $\vz\in\set{0,\bx_3}$ (case~a) or $\alpha=1$ for $\vz\in\set{\bx_1,\bx_2}$ (case~b).

\underline{case a:} If $\vz=0$, then $\beta\bx_3=-n\vx_1$ with $|\beta|\leq1$. This is only possible for $\beta=0$ which then implies $n=0$. If $\vz=\vx_3$, then $(\beta-1)\bx_3=-n\vx_1$ with $\beta\in\set{-2,-1,0}$. This is only possible for $\beta=0$, again implying $n=0$.

\underline{case b:} Again, we have to deal with two cases: If $\vz=\bx_1$ then $\alpha=1$, and $\beta\bx_3=-n\vx_1$ with $|\beta|\leq1$. As in case~a this implies $n=0$. If $\vz=\bx_2$, then $\alpha=1$, and $(\bx_1-\bx_3)+(\beta-1)\bx_3=\bx_2-n\vx_1$. Since $\bx_1-\bx_3=\bx_2$, we conclude that $n=0$ as in the second part of case~a.

We have thus shown that $T$ has no self-extensions in $\svect\XX$. It then follows from an $\LL$-graded version of Orlov's theorem, compare~\cite{Kussin:Lenzing:Meltzer:2010pre}, that $T$ has the correct number $2(p-1)$ of indecomposable summands to make $T$ tilting in $\svect\XX$.
\end{proof}

\section*{Acknowledgements}

The authors got aware of the subject treated in this paper when
participating in the \emph{ADE chain workshop}, Bielefeld Oct 31 to
Nov 1, 2008, organized by C.~M.\ Ringel. It there came as a surprise
that the categories $\snilop{p}$ and $\svect{\XX(2,3,p)}$ looked to
have a lot of
properties in common; both sequences of triangulated categories
turned out to be good candidates for producing an ADE chain.
The thanks of the authors also go to the critical audience of the Bielefeld
Representation Theory Seminar where the results of this paper were
presented in fall
2009. We thank the referee for a thorough analysis of our paper, and helpful suggestions leading to a significant improvement of the paper. Also we thank S.~Ladkani for pointing out an inaccuracy in a former description of the ADE-chain problem.

The first-named author acknowledges support by the Max Planck Institute for Mathematics, Bonn; the third author was supported by the Polish Scientific Grant Narodowe Centrum Nauki DEC-2011/01/B/ST1/06469.

\bibliographystyle{abbrv}
\def\cprime{$'$}

\end{document}